\theoremstyle{definition}
\newtheorem{theorem}{Theorem}[section]
\newtheorem{definition}[theorem]{Definition}
\newtheorem{proposition}[theorem]{Proposition}
\newtheorem{corollary}[theorem]{Corollary}
\newtheorem{lemma}[theorem]{Lemma}
\title{Banach's theorem in higher order reverse mathematics}
\author{
Jeffry L. Hirst\thanks{Department of Mathematical Sciences, Appalachian State University,
Walker Hall, Boone, NC 28608. \textit{Email:}~\texttt{hirstjl@appstate.edu} \hfill Version:  count-20230309sub}%
 \and %
Carl Mummert\thanks{Department of Computer and Information Technology, Marshall University, 1 John Marshall Drive, Huntington, WV 25755. \textit{Email:}~\texttt{mummertc@marshall.edu}}%
}
\date{March 8, 2023}
\newcommand{\RCA}{\mathsf{RCA}}
\newcommand{\WKL}{\mathsf{WKL}}
\newcommand{\QF}{\mathsf{QF}}
\newcommand{\AC}{\text{-}\mathsf{AC}}
\newcommand{\RCAo}{\mathsf{RCA}_0}
\newcommand{\WKLo}{\mathsf{WKL}_0}
\newcommand{\ACAo}{\mathsf{ACA}_0}
\newcommand{\rcaw}{\RCA_0^\omega}
\newcommand{\setN}{\mathbb{N}}
\newcommand{\C}{\mathsf{C}}
\newcommand{\Cenum}{\C_\text{enum}}
\newcommand{\Cinj}{\C_\text{inj}}
\newcommand{\Cbij}{\C_\text{bij}}
\newcommand{\cat}{{^\smallfrown}}
\newcommand{\princb}[1]{({\sf{B}}_{#1})}
\newcommand{\prince}{(\exists^2 )}
\newcommand{\princmuo}{(\mu_0)}
\newcommand{\princmu}{(\mu )}
\newcommand{\lpo}{{\sf{LPO}}}
\newcommand{\llpo}{{\sf{LLPO}}}
\newcommand{\llpomin}{{\sf{LLPOmin}}}
\renewcommand{\phi}{\varphi}
\newcommand{\lh}{{\text{length}}}
\newcommand{\wlt}{\le_{\sf W}}
\mathchardef\mhyphen="2D
\begin{document}

\maketitle

\begin{abstract}
In this paper, methods of second order and higher order reverse mathematics are applied to versions of a theorem
of Banach that extends the Schroeder--Bernstein theorem.  Some additional results address statements in
higher order arithmetic formalizing the uncountability of the power set of the natural numbers.  In general, the
formalizations of higher order principles here have a Skolemized form asserting the existence of functionals that
solve problems uniformly.  This facilitates proofs of reversals in axiom systems with restricted choice.
\end{abstract}

%%%%%%%%%%%%%%%%%%%%%%%%%%%%%%%%%%%%%%%%%%%%%%%%%%%%%%%%%%%%%%%%%%

\section{Introduction}

\vskip -5.5in
{\small {This is a pre-review draft.  After review, section and theorem numbering changed and the last section was substantially modified.
The final official version of the paper is published in Computability {\bf 12} (2023) 203-225, DOI 10.3233/COM-230453.  That
version is freely available to CiE members and can be purchased by others.}}
\vskip 5.3in

The Schroeder--Bernstein theorem is perhaps the best known result about cardinality. In full generality, it states that if $A$ and $B$ are sets, there is an injection $f \colon A \to B$, and there is an injection $g \colon B \to A$, then there is a bijection from $A$ to~$B$.
Unfortunately, this theorem is not ideal for reverse mathematics analysis.  If we add the assumption that $A, B \subseteq \setN$, the result is computationally trivial: whenever $A, B \subseteq \setN$  have the same cardinality, there is an $(A \oplus B)$-computable bijection between them.

In higher order reverse mathematics, we might consider the case where $A, B \subseteq 2^\setN$ or $A,B \subseteq \setN^\setN$. In this setting, the Schroeder--Bernstein theorem is no longer trivial. %~\cite{ns-2022}. 
 However, because the theorem does not postulate any relationship between the bijection being constructed and the original two injections, obtaining reversals presents a challenge. 

Our focus is a classical theorem of Banach~\cite{banach} from 1924 more suited to reverse mathematical analysis. Banach argued this theorem captures the essence of proofs of the Schroeder--Bernstein theorem, such as the well known proof by Julius K{\H o}nig.

\begin{theorem}[Banach]
  If $A$ and $B$ are sets, $f\colon A \to B$ is an injection, and
  $g\colon B \to A$ is an injection, there 
  are decompositions $A = A_1 \cup A_2$ and $B = B_1 \cup B_2$ such that
  $A_1 \cap A_2 = \emptyset$, $B_1 \cap B_2 = \emptyset$, $f(A_1) = B_1$,
  and $g(B_2) = A_2$.
\end{theorem}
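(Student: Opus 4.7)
The plan is to follow the classical Knaster--Tarski fixed-point approach. First I would define a monotone operator $\Phi \colon \mathcal{P}(A) \to \mathcal{P}(A)$ by $\Phi(X) = A \setminus g(B \setminus f(X))$. Order-preservation follows because the two complementations cancel: if $X \subseteq Y$ then $f(X) \subseteq f(Y)$, hence $B \setminus f(Y) \subseteq B \setminus f(X)$ and $g(B \setminus f(Y)) \subseteq g(B \setminus f(X))$, so complementing in $A$ recovers $\Phi(X) \subseteq \Phi(Y)$.

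Next I would produce a fixed point by letting $A_1$ be the union of all $X \subseteq A$ with $X \subseteq \Phi(X)$. Monotonicity of $\Phi$ shows that for each such $X$, $X \subseteq \Phi(X) \subseteq \Phi(A_1)$, so taking the union gives $A_1 \subseteq \Phi(A_1)$; then applying $\Phi$ once more yields $\Phi(A_1) \subseteq \Phi(\Phi(A_1))$, which exhibits $\Phi(A_1)$ as another post-fixed point and therefore $\Phi(A_1) \subseteq A_1$. Hence $A_1 = \Phi(A_1)$.

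Setting $A_2 = A \setminus A_1$, $B_1 = f(A_1)$, and $B_2 = B \setminus B_1$, the disjointness and covering conditions, together with $f(A_1) = B_1$, are immediate. The remaining identity unpacks the fixed-point equation:
\[
g(B_2) = g(B \setminus f(A_1)) = A \setminus \Phi(A_1) = A \setminus A_1 = A_2.
\]
Note that injectivity of $f$ and $g$ is not actually invoked in the argument above; it becomes relevant only downstream, when one assembles a Schroeder--Bernstein bijection from the decomposition.

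The real obstacle is not the classical argument itself but the formation of $A_1$ as the union of all post-fixed points of $\Phi$. In a second- or higher-order framework where $A$ and $B$ are subsets of $\setN$, $2^\setN$, or $\setN^\setN$, this step quantifies over the power set of $A$, so calibrating which axiom system proves the corresponding formalized statement, and which choice principles are needed to produce the decomposition functionally from $f$ and $g$, is where the reverse-mathematical work will lie.
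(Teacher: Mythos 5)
Your argument is correct: the operator $\Phi(X) = A \setminus g(B \setminus f(X))$ is monotone, the union of its post-fixed points is a fixed point, and the decomposition falls out exactly as you compute; your observation that injectivity is never used for the decomposition itself is also right. But this is a genuinely different route from the one the paper takes. The paper states Banach's theorem as classical background (citing Banach's 1924 paper) and explicitly aligns itself with K{\H o}nig's proof of Schroeder--Bernstein: every formalized argument in the paper proceeds by chain decomposition, tracing each element backward under $f^{-1}$ and $g^{-1}$ and classifying it according to whether and where that backward chain terminates. This is visible in the tree $T_{\vec f}$ built to derive $(\mathsf{bB}_\setN)$ from $(\WKL)$, and in the functionals $S(x,n)$ and $P(x,n)$ in the compact metric space lemma. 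The trade-off is instructive. Your Knaster--Tarski fixed point is slicker and more general (no injectivity needed), but $A_1$ is defined by a quantification over all subsets of $A$ --- an impredicative, $\Pi^1_1$-flavored comprehension even when $A \subseteq \setN$. The chain classification, by contrast, is arithmetical in $f$, $g$, and their ranges, which is precisely what lets the paper calibrate the countable versions at $\ACAo$ and $\WKLo$ and the Skolemized higher-order versions at $(\exists^2)$. So your closing diagnosis --- that forming $A_1$ is where the reverse-mathematical difficulty lives --- is right in spirit, but the paper's choice of the K{\H o}nig-style proof is exactly what shows the power-set quantification can be avoided and the sharper upper bounds obtained.
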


Restating this in terms of the existence of a bijection gives a corollary
that strengthens the Schroeder--Bernstein theorem, which we will also call Banach's Theorem.

\begin{corollary}\label{banachcor}
  If $f$ is an injection from a set $A$ to a set $B$, and
  $g$ is an injection from $B$ to  $A$, there
is a bijection $h \colon A \to B$ such that, whenever $h(a) = b$,
either $f(a) = b$ or $g(b) = a$.
\end{corollary}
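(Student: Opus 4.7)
The plan is to derive the corollary as a direct construction from the decompositions provided by Banach's theorem. First I would apply Banach's theorem to $f$ and $g$ to obtain the four sets $A_1, A_2, B_1, B_2$ with $A = A_1 \sqcup A_2$, $B = B_1 \sqcup B_2$, $f(A_1) = B_1$, and $g(B_2) = A_2$.

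Next I would define $h \colon A \to B$ piecewise: on $A_1$, set $h(a) = f(a)$, and on $A_2$, set $h(a) = g^{-1}(a)$, where $g^{-1}(a)$ denotes the unique element $b \in B_2$ with $g(b) = a$ (which exists and is unique because $g(B_2) = A_2$ and $g$ is an injection). Since $A_1$ and $A_2$ partition $A$, $h$ is a well-defined total function on $A$. The verification that $h$ is a bijection splits into two pieces: $h$ restricted to $A_1$ is a bijection onto $B_1$ (via $f$), and $h$ restricted to $A_2$ is a bijection onto $B_2$ (via $g^{-1}$); combining these with the disjointness $B_1 \cap B_2 = \emptyset$ and the covering $B = B_1 \cup B_2$ shows that $h$ is a bijection from $A$ to $B$. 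The required compatibility condition is then immediate: if $h(a) = b$ and $a \in A_1$, then by construction $f(a) = b$; if $a \in A_2$, then by construction $g(b) = a$.

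The only nontrivial concern is whether the partial inverse $g^{-1} \restriction A_2$ is available in whatever formal framework we are working in. In the classical setting of the corollary this is completely unproblematic, since injectivity of $g$ together with the equation $g(B_2) = A_2$ guarantees the existence and uniqueness of the preimage pointwise. In a reverse-mathematical setting one would note that the graph of $h$ can be described directly from $f$, $g$, $A_1$, and $B_2$ without explicitly forming $g^{-1}$ as a function; but since no formal axiom system has yet been fixed at this point in the paper, I expect this subtlety to be deferred to later sections and not to appear in the proof of the corollary itself.
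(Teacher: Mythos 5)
Your construction is exactly the intended derivation: the paper states the corollary as an immediate restatement of Banach's theorem without spelling out a proof, and the piecewise definition $h = f$ on $A_1$, $h = g^{-1}$ on $A_2$ is precisely what that restatement amounts to. Your verification of bijectivity and the compatibility condition is correct, so the proposal matches the paper's (implicit) argument.
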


A brief history of Banach's Theorem and the Schr\"oder--Bernstein theorem is given by Remmel~\cite{remmel}*{Introduction}. An analysis of Banach's Theorem for subsets of $\setN$,
using subsystems of second order arithmetic, appears in Hirst's thesis~\cite{hirstthesis}*{\S3.2} and a related article~\cite{hirstmt}.  That development uses symmetric marriage theorems to prove the following second order arithmetic results.

\begin{theorem}[\cite{hirstmt}*{Theorem 4.1}]\label{t:hirst1}
$\RCAo$ proves the following are equivalent:
\begin{enumerate}
\item $\ACAo$.
\item (Countable Banach's Theorem)  Let $f\colon \setN \to \setN$ and $g\colon\setN \to \setN$ be injections. Then there is a bijection $h\colon \setN \to \setN$ such that for all $m$ and $n$, $h(n)=m$ implies either $f(n)=m$ or $g(m)=n$.
\end{enumerate}
\end{theorem}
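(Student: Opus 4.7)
The plan is to prove the two implications separately. For $(1) \Rightarrow (2)$, I would formalize the classical orbit-based proof of Banach's theorem inside $\ACAo$. For $(2) \Rightarrow (1)$, I would use the Countable Banach Theorem to compute the range of an arbitrary injection, a principle known to be equivalent to $\ACAo$ over $\RCAo$.

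\emph{Forward direction.} Working in $\ACAo$ with injections $f, g \colon \setN \to \setN$, I identify $\setN$ with both the ``$A$-side'' and the ``$B$-side'' of Banach's theorem. For each $n \in A$, its backward orbit is the sequence $n, g^{-1}(n), f^{-1}(g^{-1}(n)), \ldots$, extended as long as the next preimage exists. Each orbit either (i) terminates at an $A$-point outside $\mathrm{range}(g)$, (ii) terminates at a $B$-point outside $\mathrm{range}(f)$, or (iii) is infinite. Since $\mathrm{range}(f)$ and $\mathrm{range}(g)$ are each $\Sigma^0_1$ in the parameters, arithmetical comprehension yields the set $A_2$ of $n$ falling into case (ii), together with its complement $A_1 := \setN \setminus A_2$. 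Setting $h(n) = f(n)$ on $A_1$ and $h(n) = g^{-1}(n)$ on $A_2$ (the latter defined because such $n$ lie in $\mathrm{range}(g)$) produces the bijection; a routine orbit chase inside $\ACAo$ verifies injectivity, surjectivity, and the required implication that $h(n) = m$ entails $f(n) = m$ or $g(m) = n$.

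\emph{Reversal.} Given an injection $\phi \colon \setN \to \setN$ in $\RCAo$, I would use a pairing function to partition $\setN$ into countably many ``chains,'' one per prospective index $i \in \setN$, and construct injections $f, g$ (uniformly computable from $\phi$, hence $\RCAo$-definable) arranging the internal structure of chain $i$ to be finite precisely when $i \in \mathrm{range}(\phi)$. The standard device is to let chain $i$ extend freely stage by stage and cap it exactly at the first stage $s$ (if any) with $\phi(s) = i$. Applying the Countable Banach Theorem produces a bijection $h$; because the theorem's conclusion forces either $f(n) = h(n)$ or $g(h(n)) = n$ at every $n$, the value of $h$ at a designated base element of chain $i$ must differ according as chain $i$ is finite or infinite. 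Inspecting that single value then decides whether $i \in \mathrm{range}(\phi)$, so the range exists by $\Delta^0_1$ comprehension and $\ACAo$ follows.

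\emph{Main obstacle.} The forward direction is routine once arithmetical comprehension is available. The technical core is the reversal coding: the pair $(f, g)$ must be $\RCAo$-definable from $\phi$, and the decoding from $h$ back to $\mathrm{range}(\phi)$ must be achieved without unbounded search. The delicate step is engineering each chain so that the binary choice forced at its base by Banach's conclusion faithfully records whether the chain is finite; I expect this construction to parallel the symmetric marriage arguments used in Hirst's earlier second-order analysis of Banach's theorem referenced in~\cite{hirstthesis,hirstmt}.
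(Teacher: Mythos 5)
The paper does not actually prove this theorem: it is quoted from Hirst's 1990 article, where it is obtained from a symmetric marriage theorem, so your direct argument is in any case a different route from the cited one. Your forward direction is fine: the backward-orbit classification is arithmetical in $f$ and $g$ (membership in $\mathrm{range}(f)$ and $\mathrm{range}(g)$ is $\Sigma^0_1$), so $A_2$ exists by arithmetical comprehension and the usual K\H{o}nig-style verification of the resulting $h$ goes through in $\ACAo$.

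The reversal, however, has a genuine gap as described. Because $f$ and $g$ are \emph{total} injections, every vertex of the associated bipartite graph (edges $\{a,f(a)\}$ and $\{b,g(b)\}$) carries an outgoing edge, so any finite connected component must be a cycle; on a cycle Banach's conclusion is satisfied by both the all-$f$ matching and the all-$g^{-1}$ matching, and $h$ is likewise undetermined on a two-way infinite path. The only components on which $h$ is forced are one-way infinite paths, where the type of the unique endpoint (an $A$-point outside $\mathrm{range}(g)$ versus a $B$-point outside $\mathrm{range}(f)$) dictates the matching. Hence your plan --- make chain $i$ finite exactly when $i\in\mathrm{range}(\phi)$ and read the answer from $h$ at the base --- cannot work: in the finite case the base's $h$-value is not pinned down, and if instead the chain ``extends freely'' forever in both directions it is equally ambiguous. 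What is needed is a gadget whose designated component is a one-way infinite path in \emph{both} cases, with endpoints of opposite type: for instance, fix $f(a^i_k)=b^i_k$ and $g(b^i_k)=a^i_{k+1}$ once and for all, and defer only the question of whether $a^i_0$ enters $\mathrm{range}(g)$; if $\phi(s)=i$, let a reserved vertex $c^i_s$ (kept out of $\mathrm{range}(f)$, with the unused $c^i_{s'}$ routed into disjoint dump chains) satisfy $g(c^i_s)=a^i_0$, converting the $A$-end $a^i_0$ into an interior vertex of a path with $B$-end $c^i_s$ and flipping the forced value of $h(a^i_0)$ from $f(a^i_0)$ to $c^i_s$. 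Then $i\in\mathrm{range}(\phi)$ iff $h(a^i_0)\neq f(a^i_0)$, a $\Delta^0_1$ condition, and $\ACAo$ follows. Compare Proposition~\ref{B10} and Figure~\ref{B10fig}(a) in this paper: the two-way infinite chain there leaves $h(1)$ undetermined, which is tolerable for $\llpo$ but would be fatal for computing a range.
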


\begin{theorem}[\cite{hirstmt}*{Theorem~4.2}]\label{t:hirst2}
$\RCAo$ proves the following are equivalent:
\begin{enumerate}
\item $\WKLo $.
\item (Bounded Countable Banach's Theorem)  Let $f \colon \setN \to \setN$ and $g\colon\setN \to \setN$ be injections such that the ranges of $f$ and $g$ exist.
Then there is a bijection $h\colon \setN \to \setN$ such that for all $m$ and $n$, $h(n)=m$ implies either $f(n)=m$ or $g(m)=n$.
\end{enumerate}
\end{theorem}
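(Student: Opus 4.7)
The plan is to prove both directions within $\RCAo$ by modeling Banach's theorem as a matching problem on a bipartite graph of bounded degree, then exploiting the known correspondence between $\WKLo$ and such combinatorial principles (for instance, bounded König's lemma, $\Sigma^0_1$ separation, or the symmetric marriage theorem for locally finite bipartite graphs).

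For the forward direction, assume $\WKLo$ and form the bipartite graph $G$ with left vertices $\setN$ (the domain of $f$), right vertices $\setN$ (the domain of $g$), and edges $\{n, f(n)\}$ together with $\{g(m), m\}$ for $n,m \in \setN$. Every vertex has degree at most two, and because the ranges of $f$ and $g$ are given, $\RCAo$ can compute each vertex's edge set and degree. Consequently, each connected component is recognizable as a finite cycle, a finite alternating path, a one-way infinite alternating path, or a two-way infinite alternating path. A bijection $h$ of the desired form is exactly a perfect matching of $G$ using only edges of the graph, and in every component this amounts to a binary choice (or is forced outright). Build a $\{0,1\}$-tree $T$ whose nodes are finite partial matchings consistent on the components examined so far; $T$ is $\Delta^0_1$ and infinite, and $\WKLo$ supplies a path that assembles into the required $h$.

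For the reverse direction, argue in $\RCAo$ that Bounded Countable Banach's Theorem implies $\WKLo$ by encoding a standard $\WKLo$-equivalent into an instance with the needed range assumptions. Given, say, disjoint ranges $X$ and $Y$ of injections (with ranges existing), construct injections $f, g\colon \setN \to \setN$ whose ranges exist and whose bipartite graph $G$ consists of one two-way infinite alternating chain $C_k$ for each $k \in \setN$, arranged so that the two possible perfect matchings of $C_k$ correspond to placing $k$ into a separator for $X$ and $Y$ or into its complement. Any bijection $h$ delivered by Bounded Countable Banach's Theorem then decodes, uniformly in $\RCAo$, into a separating set, yielding $\WKLo$.

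The main obstacle is the reversal: the encoding must force the two-way infinite components to appear in exactly the prescribed places and must make the two possible matchings of each component witness the correct bit of the desired separator, all while keeping the ranges of $f$ and $g$ $\Delta^0_1$-computable from the given data. The forward direction, by contrast, is essentially a compactness argument once one notes that the binary choices at distinct components are independent, so the tree $T$ of partial matchings is manifestly bounded and infinite.
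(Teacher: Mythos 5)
First, note that the paper does not prove this theorem itself---it imports it from Hirst's marriage-theorems paper---so your sketch is best compared against the blueprint the present paper follows in its higher order analogues, Propositions \ref{WKLbBN} and \ref{B10}. Your forward direction is essentially that blueprint: a $0$--$1$ tree whose level-$k$ nodes are locally consistent assignments of each $n<k$ to either $f(n)$ or $g^{-1}(n)$, with $\WKLo$ extracting a path. Two cautions. In $\RCAo$ the components are \emph{not} ``recognizable'' as one-way versus two-way infinite (that is a $\Sigma^0_1$ distinction), so the argument must be phrased purely in terms of local consistency conditions, as in clauses (i)--(iv) of the proof of Proposition~\ref{WKLbBN}; and the claim that $T$ is infinite is the actual mathematical content of this direction (one must check that finite odd-length path components cannot occur and that forced choices on partially revealed components never conflict), which you assert rather than prove. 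These are gaps in rigor, not in strategy.

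The reversal has a genuine gap. As described, your encoding makes every component of $G$ a two-way infinite alternating chain; but such a chain admits \emph{both} of its perfect matchings, so the bijection $h$ is free to choose either one and decodes no information about $k$ whatsoever. The point of the gadget---visible in Proposition~\ref{B10} and in Hirst's original argument---is that the chain assigned to $k$ must \emph{degenerate} into a one-way, hence forced, chain precisely when $k$ enters the range of one of the two given injections, with the end at which it is capped determining which of the two matchings is forced; only when $k$ lies in neither range does the chain stay two-way infinite, and there the freedom of $h$ is harmless. Relatedly, the phrase ``disjoint ranges $X$ and $Y$ of injections (with ranges existing)'' cannot be right: if the ranges of the $\Sigma^0_1$-separation instance existed as sets, $X$ itself would be a separator and there would be nothing to reduce. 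What must exist are the ranges of the \emph{constructed} $f$ and $g$, and securing that is exactly why the capping perturbation must move each element by a uniformly bounded amount (compare the bounding functions $b_0(n)=n+2$ and $b_1(n)=n$ in Proposition~\ref{B10}). You correctly identify these as the obstacles, but the proposal does not overcome them, and the construction that does so is the heart of the reversal.
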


In this paper, we use methods from higher order reverse mathematics to study the \emph{uniformity} of results like these. We are interested not only in the existence of the bijection $h$, but also whether there is a functional that can produce $h$ uniformly from $f$ and~$g$. This question of uniformity is purely higher order, and cannot be expressed directly in second order reverse mathematics.   To study this uniformity, we examine Skolemized versions of theorems. For example, instead of examining Banach's Theorem
in a form such as
\[
(\forall f, g, A, B)(\exists h)\, \Phi(f,g,A,B,h),
\]
we consider the form
\[
(\exists H)(\forall f, g, A, B)\,\Phi(f,g,A,B,H(f,g,A,B)).
\]

Both versions of a theorem are of interest, of course, and the latter always follows from the former if we assume sufficient choice principles. We are interested in the Skolemized forms because they represent a particular kind of uniformity, and we typically do not assume enough choice to derive them directly from the un-Skolemized form. As discussed in Section~\ref{sec:realize}, this is a different kind of uniformity than Weihrauch reducibility.

Section~\ref{sec:count} begins with a survey of reverse mathematics results on countability.   Sections \ref{sec:bounding} and \ref{sec:realize} present a number of supporting lemmas to prepare for the analysis of Banach's theorem. Section~\ref{sec:banachn} examines Theorems \ref{t:hirst1} and~\ref{t:hirst2} from the viewpoint of Skolemized uniformity.  Section~\ref{sec:banachcompact} extends the study of Banach's Theorem to subsets of $2^\setN$ and, more generally, subsets of compact metric spaces.

%%%%%%%%%%%%%%%%%%%%%%%%%%%%%%%%%%%%%%%%%%%%%%%%%%%%%%%%%%%%%%%%%%

\subsection{Formal theories}\label{sec:formal}

This work relies on several well studied systems of second order arithmetic and higher order arithmetic. Simpson~\cite{sim09} and Dzhafarov and Mummert~\cite{MR4472209} provide thorough references for reverse mathematics. Kohlenbach~\cite{koh05} provides a reference for higher order reverse mathematics. We follow Kohlenbach's definitions of higher order systems throughout this paper, noting any exceptions explicitly.
%  Notationally, the lower case letters $m,n,p,q,r$ have type 0, and the letters $f,g,h$ to have type $1 = 0\to 0$.  Capital letters $X, Y, Z$ represent sets of natural numbers. 

For the purposes of higher order reverse mathematics, we assume that our systems use the function based language of higher order arithmetic, rather than the set based language.  Accordingly, $2^\setN$ is used throughout this paper to denote the set of all functions from $\setN$ to $\{0,1\}$.  

Many of our results will use fragments of the quantifier-free choice scheme. For types $\rho$ and $\tau$, we have the scheme
\[
\QF\AC^{\rho,\tau}\colon (\forall x^\rho)(\exists y^\tau) A(x,y) \to (\exists Y^{\rho \to \tau})(\forall x^\rho) A(x, Y(x)),
\]
where $A$ is a quantifier free formula. Here $A$ can have parameters of arbitrary type. 

The system $\RCAo^\omega = \mathsf{E}\text{-}\mathsf{PRA}^\omega + \QF\AC^{1,0}$ is a fragment of higher order arithmetic. It is axiomatized by a set of basic axioms along with induction for $\Sigma^0_1$ formulas and the choice scheme $\QF\AC^{1,0}$. The syntax has term-forming operations for $\lambda$ abstraction and primitive recursion.

The system $\RCAo^2$ is a second order fragment of $\RCAo^\omega$,
with only types $0$ and $1$ for elements of $\setN$ and functions $\setN \to \setN$, respectively. 
Formally, we have $\RCAo^2 = \mathsf{E}\text{-}\mathsf{PRA}^\omega + \QF\AC^{0,0}$.  
This system is equivalent to the set based system $\RCAo$ presented by Simpson~\cite{sim09}, and we will henceforth denote $\RCAo^2$ by $\RCAo$ when no confusion is likely. 

A sequence $\langle f_n : n \in \setN\rangle$ is viewed as a map $f\colon \setN \times \setN \to \setN$, so that $f_n(m) = f(\langle n,m\rangle)$, where $\langle \cdot,\cdot\rangle$ is a suitable pairing function.

Emulating Kohlenbach~\cite{koh05}, we use parentheses around the name of a functional to denote the principle stating that the functional exists. For example, the principle $\prince$ asserts the existence of the functional $\exists^2$, defined below.

There are several ways to extend the comprehension axioms of second order arithmetic to the higher order setting. One particular functional (set) existence axiom for higher order arithmetic is $(\exists^2)$, defined by
\[
(\exists^2)\colon (\exists \phi^{1 \to 0})(\forall f)
( \phi(f) = 0 \leftrightarrow (\exists n) [ f(n) = 0] ).
\]
The functional $\phi^{1\to 0}$ from this principle is itself called $\exists^2$.
The system 
$\ACAo^\omega \equiv \RCAo^\omega + \prince$ implies the arithmetical comprehension scheme. Kohlenbach~\cite{koh05} showed that $\ACAo^\omega$ is conservative over $\ACAo$ for sentences in the language~$L_2$.

Other functional existence principles correspond to $\ACAo$.  Kohlenbach~\cite{koh05} presents two such functionals. One, $\mu_0$, selects a zero of a function if such a zero exists:
\[
\princmuo\colon  (\exists \mu_0^2 )(\forall f^1)
[(\exists n^0) (f(n)=_0 0) \to        
f(\mu_0 (f))=0].
\]
Another returns the least zero of a function, in the fashion of Feferman~\cite{fefhbk}*{\S2.3.3}:
\[
  \princmu\colon (\exists \mu^2)( \forall f^1) (
   (\exists n^0) (f(n)=_0 0) \to            
  [f(\mu(f)) = 0 \land (\forall t< \mu (f) ) (f(t) \neq 0 )]).
  \]

\begin{proposition}\label{K3.9}
The following are pairwise equivalent over $\rcaw$: $\prince$, $\princmuo$, and $\princmu$.
%\begin{enumerate}
%%   \item  $\prince\colon  (\exists \phi^2)( \forall f^1) (\phi (f) =_0 0 \leftrightarrow \exists n^0 (f(n) =_0 0))$\label{K3.9.1}
%% \item  $\princmuo\colon  (\exists \mu_0^2 )(\forall f^1) ( \exists n^0 (f(n)=_0 0) \to f(\mu_0 (f))=0$\label{K3.9.2}
%% \item $\princmu\colon (\exists \mu^2)( \forall f^1) ( \exists n^0 (f(n)=_0 0) \to
%% [f(\mu(f)) = 0 \land \forall t< \mu (f) (f(t) \neq 0 )])$\label{K3.9.3}
%\end{enumerate}
\end{proposition}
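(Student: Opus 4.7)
The plan is to establish the pairwise equivalence over $\rcaw$ by proving the cycle $\princmu \to \princmuo \to \prince \to \princmuo$; the first link is immediate, the second is a short $\lambda$-abstraction construction, and the third is the only genuinely higher-order step. To recover $\princmu$ from the others, I would add one more bounded-search step $\princmuo \to \princmu$.

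For the easy links, $\princmu \to \princmuo$ is immediate since the least zero of $f$ is a zero. For $\princmuo \to \prince$, given $\mu_0$, use $\lambda$-abstraction in $\rcaw$ to define $E(f) := 0$ when $f(\mu_0(f)) = 0$ and $E(f) := 1$ otherwise, and verify that $E$ satisfies the defining equivalence of $\exists^2$: if $f$ has a zero, then $\princmuo$ forces $f(\mu_0(f)) = 0$, so $E(f) = 0$; conversely, if $E(f) = 0$, then $\mu_0(f)$ is a zero of $f$. For $\princmuo \to \princmu$, define $\mu(f)$ by bounded primitive-recursive minimization as the least $t \leq \mu_0(f)$ with $f(t) = 0$ (defaulting to $0$ if none exists); whenever $f$ has any zero, $f(\mu_0(f)) = 0$ ensures this minimum is exactly the least zero of $f$, so $\princmu$ is satisfied.

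The key step is $\prince \to \princmuo$, which I would carry out by applying $\QF\AC^{1,0}$. Let
\[
A(f, n) \equiv \exists^2(f) \neq 0 \lor f(n) = 0.
\]
First, verify $(\forall f)(\exists n)\, A(f, n)$ in $\rcaw + \prince$ by cases on $\exists^2(f)$: if $\exists^2(f) \neq 0$, take $n = 0$; if $\exists^2(f) = 0$, the defining property of $\exists^2$ provides an $n$ with $f(n) = 0$. Then $\QF\AC^{1,0}$ delivers a functional $Y$ with $A(f, Y(f))$ for all $f$, which serves as $\mu_0$: if $f$ has a zero, then the first disjunct of $A(f, Y(f))$ fails, so $f(Y(f)) = 0$. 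The main obstacle is confirming that $A$ is eligible for $\QF\AC^{1,0}$, namely, genuinely quantifier-free once $\exists^2$ is treated as a constant of the extended language; under Kohlenbach's conventions this is standard, so no substantive additional work is required.
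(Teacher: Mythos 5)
Your proof is correct and follows essentially the same route as the paper: the trivial implication $\princmu\to\princmuo$ and the bounded-search definition of $\mu(f)$ as the least $t\le\mu_0(f)$ with $f(t)=0$ are exactly the paper's argument, while the equivalence of $\prince$ and $\princmuo$ is something the paper simply cites from Kohlenbach's Proposition~3.9. Your explicit derivation of that cited equivalence (applying $\QF\AC^{1,0}$ to the quantifier-free formula with $\exists^2$ as a type-$2$ parameter, which the paper's conventions expressly allow) is the standard argument and is sound.
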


\begin{proof}
  Kohlenbach~\cite{koh05}*{Proposition~3.9}
  %\cite[Proposition~3.9]{koh05}
  proves 
  the equivalence of $\prince$ and $\princmuo$.
  %(\ref{K3.9.1}) and~(\ref{K3.9.2}).
  Because any $\mu$ satisfying $\princmu$ also satisfies $\princmuo$,
  it suffices to show that
  $\rcaw$ proves thaxt $\princmuo$ implies $\princmu$.   Given a functional $\mu_0$ as in the
  definition of $\princmuo$,
$\mu (f)$ is the least $t\le \mu_0 (f)$ such that $f(t) = 0$.  This functional is primitive recursive in $\mu_0$ and thus exists by $\rcaw$ and $\princmuo$.
\end{proof}

%%%%%%%%%%%%%%%%%%%%%%%%%%%%%%%%%%%%%%%%%%%%%%%%%%%%%%%%%%%%%%%%%%

\section{Countability}\label{sec:count}

One motivation for this research is the question: how difficult is it to prove $2^\setN$ is uncountable? As usual, being uncountable simply means not being countable. There are many ways to express the principle that $2^\setN$ is countable, with the following three being particularly natural:

\begin{itemize}
\item $\Cenum$: there is a sequence $\langle f_n : n \in \setN\rangle$ such that for all $g\in 2^\setN$ there is an $n \in \setN$ with $g = f_n$.

\item $\Cinj$: there is a functional $\Phi^{1\to 0}$ that is an injection from 
$2^\setN$ to $\setN$.

\item $\Cbij$: there is a functional $\Phi^{1\to 0}$ that is an bijection from 
$2^\setN$ to $\setN$.
\end{itemize}

The principles $\Cinj$ and $\Cbij$ cannot be stated in the language of second order arithmetic, but they can be stated in~$\rcaw$. When we say  we assume $\Cinj$ or $\Cbij$, this means we assume the existence of a functional with the property stated. Similarly, if we assume $\lnot \Cinj$ or $\lnot \Cbij$, this means we assume no functional has the property stated.

In context of set theory there is little reason to distinguish between $\Cinj$ and $\Cbij$, because of the comprehension principles available. As discussed below, there are key distinctions between these principles in the context of theories of arithmetic with restricted comprehension principles.

Of course, $\Cenum$, $\Cbij$, and $\Cinj$ are classically false. There are two key questions:
which systems are ``strong enough'' to disprove  these false principles, and which are ``weak enough'' to be consistent with one or more of the principles. As is well known, Cantor's diagonalization proof allows us to disprove $\Cenum$ in very weak systems (compare Theorem II.4.9 of Simpson~\cite{sim09} showing $\mathbb{R}$ is uncountable in $\RCAo$).

\begin{proposition} $\RCAo^2$ proves $\lnot \Cenum$.
\end{proposition}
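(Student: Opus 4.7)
The plan is to carry out Cantor's diagonal argument and verify that it can be executed inside $\RCAo^2$. Suppose for contradiction that $\Cenum$ holds, witnessed by some sequence $\langle f_n : n \in \setN\rangle$ encoded as a function $f \colon \setN \times \setN \to \setN$ with $f_n(m) = f(\langle n, m\rangle)$. The goal is to produce an element $g \in 2^\setN$ that cannot appear in this sequence.

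Define $g \colon \setN \to \{0,1\}$ by setting $g(n) = 1$ if $f(\langle n,n\rangle) = 0$ and $g(n) = 0$ otherwise. This formulation has the advantage that it produces a genuine element of $2^\setN$ regardless of whether the individual $f_n$'s themselves take values in $\{0,1\}$. Since $g$ is primitive recursive in $f$, the functional $\lambda n.\, g(n)$ exists in $\RCAo^2$ as a function $\setN \to \setN$ with range contained in $\{0,1\}$.

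By construction, for every $n \in \setN$ we have $g(n) \neq f_n(n)$: if $f_n(n) = 0$ then $g(n) = 1$, and if $f_n(n) \neq 0$ then $g(n) = 0 \neq f_n(n)$. Hence $g \neq f_n$ for every $n$. But $\Cenum$ guarantees some $n$ with $g = f_n$, and in particular $g(n) = f_n(n)$, which is the desired contradiction.

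There is no substantial obstacle here; the only point requiring care is that $\Cenum$ as stated does not restrict the $f_n$ themselves to $2^\setN$, so the diagonal function must be defined in a way that forces $g(n) \in \{0,1\}$ while still differing from $f_n(n)$. The definition above handles this cleanly, and the resulting $g$ is available in $\RCAo^2$ without invoking any comprehension beyond primitive recursion.
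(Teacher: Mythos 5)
Your proof is correct and is essentially the paper's own argument: both diagonalize by constructing $g$ with $g(n) = 1$ when $f_n(n) = 0$ and $g(n) = 0$ otherwise (the paper writes this as $g(m) = 1 - f_m(m)$ with truncated subtraction), and both observe that $g$ exists by recursive comprehension and differs from every $f_n$ at $n$. Your extra remark about the $f_n$ not necessarily taking values in $\{0,1\}$ is a fair point of care, but it is already handled implicitly by the paper's truncated subtraction.
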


\begin{proof}
Given $\langle f_n : n \in \setN\rangle$ witnessing $\Cenum$, $\RCAo^2$ can construct the function 
%\[
$g$ defined by $g(m) =1 - f_m(m)$.
%\]
Then $g \in 2^\setN$, but $g$ cannot be $f_k$ for any $k\in \setN$. 
\end{proof}

The principles $\Cinj$ and $\Cbij$ have much more interesting behavior. Normann and Sanders~\cite{ns-2020} provide a detailed analysis of the negations of these principles, which they name $\textsf{NIN}$ and $\textsf{NBI}$, respectively. (They formulate $\textsf{NIN}$ and $\textsf{NBI}$ for $\mathbb{R}$ but the results hold equally for $2^\setN$.) Their Theorem~3.2 shows that the true principle $\textsf{NIN}$ is not provable in the system $\mathsf{Z}^\omega_2 + \QF\AC^{0,1}$ (which includes $\Pi^1_\infty$ comprehension with parameters of type~$1$), and hence this system is consistent with $\Cinj$~\cite{ns-2020}*{Theorem~3.26}.  
 %
%Normann and Sanders~\cite{ns-2020} 
They also show that $\textsf{NIN}$ is provable in $\mathsf{Z}^\Omega_2+ \QF\AC^{0,1}$, which includes the functional $\exists^3$ in addition to $\Pi^1_\infty$ comprehension.  In the remainder of this section, we discuss some aspects of their results related to $\Cinj$ and $\Cbij$.

A key issue in analyzing $\Cinj$ is that the range of an injection from $\setN^\setN$ to $\setN$ may be hard to form with weak comprehension axioms. We will see that a similar issue arises in the study of Banach's theorem, as well, where the existence of the range of a functional becomes a key question.   By contrast, it is relatively easy to disprove~$\Cbij$~\cite{ns-2020}*{Theorem~3.28}.

\begin{proposition}[$\rcaw + \QF\AC^{0,1}$]\label{p:norange}
There is no injection $\Phi\colon 2^\setN \to \setN$ for which the characteristic function for the range exists.
In particular, $\Cbij$ is disprovable in $\rcaw + \QF\AC^{0,1}$.
\end{proposition}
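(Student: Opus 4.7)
The plan is to derive a contradiction by Cantor-style diagonalization, with $\QF\AC^{0,1}$ supplying the enumeration. Suppose $\Phi\colon 2^\setN \to \setN$ is an injection and $\chi$ is the characteristic function for its range. The main task is to extract, from $\Phi$ and $\chi$, a sequence $\langle f_n : n \in \setN\rangle$ of elements of $2^\setN$ such that $f_n = g$ whenever $\Phi(g) = n$. Once this sequence exists, the function $d(n) = 1 - f_n(n)$ is in $2^\setN$, so by the hypothesis on $\Phi$ its image $k = \Phi(d)$ satisfies $\chi(k) = 1$ and $\Phi(f_k) = k = \Phi(d)$; injectivity of $\Phi$ then yields $f_k = d$, which contradicts $d(k) = 1 - f_k(k)$.

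To produce the sequence, I would apply $\QF\AC^{0,1}$ to the statement
\[
(\forall n^0)(\exists f^1)\bigl[\chi(n) =_0 0 \ \lor\ \Phi(f) =_0 n\bigr].
\]
For each $n$, this holds: if $\chi(n) = 0$ take $f$ arbitrary, and if $\chi(n) = 1$ there is some $f \in 2^\setN$ (a fortiori in $\setN^\setN$) witnessing $\Phi(f) = n$. The matrix is quantifier-free, so $\QF\AC^{0,1}$ delivers a functional $F^{0 \to 1}$ with this property for all $n$. This $F$ need not take values in $2^\setN$, so I would replace it by the projected sequence $f_n(m) := \min(F(n)(m), 1)$ and interpret $\Phi$ via the standard convention that only the $\{0,1\}$-valued restriction matters on $2^\setN$; equivalently, one works with $\tilde\Phi(f) = \Phi(\lambda m.\min(f(m),1))$ throughout. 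For every $n$ in the range of $\Phi$, the corresponding $f_n$ lies in $2^\setN$ and satisfies $\Phi(f_n) = n$, which is all the diagonal argument needs.

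For the last sentence of the statement, I would simply observe that a bijection $\Phi\colon 2^\setN \to \setN$ is in particular an injection, and the characteristic function of its range is the constant function $\lambda n.1$, which is primitive recursive and hence available in $\rcaw$. Thus $\Cbij$ furnishes exactly the hypotheses of the first assertion and is therefore refuted.

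The only delicate point I anticipate is the bookkeeping around $2^\setN$ versus $\setN^\setN$, since $\QF\AC^{0,1}$ produces a function into $\setN^\setN$; the projection trick described above sidesteps this cleanly. Apart from that, the argument is a direct Skolemization of Cantor's diagonal, exactly as in the proof of $\lnot\Cenum$, with the range characteristic function playing the role that makes the choice step legal.
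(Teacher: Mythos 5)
Your argument is correct and is essentially the paper's: both use the characteristic function of the range to make the choice statement legal for $\QF\AC^{0,1}$, extract a sequence $\langle f_n\rangle$ with $\Phi(f_n)=n$ for $n$ in the range, and then diagonalize (the paper routes the final contradiction through the already-established $\lnot\Cenum$ rather than inlining Cantor's diagonal, but that is the same computation). Your explicit handling of the $2^\setN$ versus $\setN^\setN$ issue via the $\min(\cdot,1)$ projection is a detail the paper glosses over, and it is handled correctly.
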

\begin{proof}
  We will work in $\rcaw + \QF\AC^{0,1}$ and assume there is a bijection $\Phi$ from $2^\setN$ to $\setN$ with range $D = \{ n : (\exists g) [\Phi(g) = n]\}$ given by a characteristic function.  We will prove the principle~$\Cenum$ by constructing a kind of left inverse of~$\Phi$, which will be a (possibly noninjective) enumeration of $2^\setN$.  Because $\RCAo$ proves $\lnot \Cenum$, this gives a contradiction. 

  By assumption, for each $n \in D$ there is a $g \in 2^\setN$ with $\Phi(g) = n$.
Therefore, by $\QF\AC^{0,1}$, we may form a function $f$ so that 
%\[
$(\forall n)[n \in D \to \Phi(f_n) = n]$.
%\]
Then $\langle f_n : n \in \setN\rangle$ is an enumeration of $2^\setN$, so $\Cenum$ holds,
a contradiction.
\end{proof}

We now explain how the lemma implies certain higher order formulations of the Schroeder--Bernstein theorem
are nontrivial. Suppose, in the context of set theory, we wanted to try to use the Schroeder--Bernstein theorem to show $2^\setN$ is countable. 
Because there is a trivial injection from $\setN$ to $2^\setN$, the other
assumption in the Schroeder--Bernstein theorem is the existence of an injection from $2^\setN$ to $\setN$, that is, $\Cinj$. The conclusion is the existence of a bijection, that is, $\Cbij$. We can thus
view the implication $\Cinj \to \Cbij$ as a specific formal instance of the Schroeder--Bernstein theorem. (Normann and Sanders~\cite{ns-2022} study a different formulation of the Schroeder--Bernstein theorem, which they call $\mathsf{CB}\setN$.) 

\begin{corollary} The implication $\Cinj \to \Cbij$ is not provable in $\mathsf{Z}^\omega_2 + \QF\AC^{0,1}$. 
\end{corollary}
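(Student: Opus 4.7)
The proof will be a short contrapositive argument combining Proposition~\ref{p:norange} with the Normann--Sanders consistency result cited in the preceding discussion. The key observation is that $\mathsf{Z}^\omega_2 + \QF\AC^{0,1}$ extends $\rcaw + \QF\AC^{0,1}$, so any disprovability result obtained in the weaker system transfers up.

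First I would note that by Proposition~\ref{p:norange}, the system $\rcaw + \QF\AC^{0,1}$ proves $\lnot \Cbij$, and hence $\mathsf{Z}^\omega_2 + \QF\AC^{0,1}$ also proves $\lnot \Cbij$. Next, I would suppose for contradiction that $\mathsf{Z}^\omega_2 + \QF\AC^{0,1}$ proves the implication $\Cinj \to \Cbij$. Combining this with $\lnot \Cbij$ via propositional reasoning, the system would also prove $\lnot \Cinj$, i.e., the principle $\textsf{NIN}$ of Normann and Sanders.

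Finally, I would appeal to the cited result of Normann and Sanders~\cite{ns-2020}*{Theorem~3.2}, which says precisely that $\textsf{NIN}$ is \emph{not} provable in $\mathsf{Z}^\omega_2 + \QF\AC^{0,1}$ (equivalently, this system is consistent with $\Cinj$, as noted before the corollary). This contradicts the conclusion of the previous paragraph, completing the proof.

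There is no real obstacle here: the argument is a one-line combination of an already-established positive result about $\Cinj$ (consistency) with an already-established negative result about $\Cbij$ (disprovability). The only point worth verifying carefully is that the base system for Proposition~\ref{p:norange} sits inside $\mathsf{Z}^\omega_2 + \QF\AC^{0,1}$, which is immediate from the definitions. The corollary can therefore be written as a brief paragraph rather than a structured proof.
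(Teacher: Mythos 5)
Your proof is correct and is essentially the paper's own argument, merely spelled out: the paper's one-line proof (``$\mathsf{Z}^\omega_2 + \QF\AC^{0,1}$ is consistent with $\Cinj$ but not $\Cbij$'') is exactly your combination of Proposition~\ref{p:norange} with the Normann--Sanders nonprovability of $\textsf{NIN}$. No gaps; your expanded version just makes the contrapositive reasoning explicit.
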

\begin{proof}
$\mathsf{Z}^\omega_2 + \QF\AC^{0,1}$ is consistent with $\Cinj$ but not $\Cbij$. 
\end{proof}

Lemma~\ref{p:norange} can also be used to obtain an upper bound on the strength required
to disprove~$\Cinj$. Normann and Sanders prove a version of the following lemma using
the principle $(\exists^3)$ as a formalization of
$\Sigma^1_1$ comprehension with functional parameters.  

\begin{corollary}[see Normann and Sanders~\cite{ns-2020}*{Theorem~3.1}]
  $\Cinj$ is disprovable from $\RCAo + \QF\AC^{0,1}$ along with $\Sigma^1_1$ comprehension
  with parameters of type~$2$.  
\end{corollary}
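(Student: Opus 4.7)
The plan is to reduce to Proposition~\ref{p:norange} by showing that, under the hypothesized comprehension scheme, any injection from $2^\setN$ to $\setN$ must have its range given by a characteristic function, which Proposition~\ref{p:norange} already rules out.

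Concretely, I would work in $\RCAo + \QF\AC^{0,1} + \Sigma^1_1\text{-CA with type-2 parameters}$ and argue by contradiction: assume $\Cinj$ holds, and fix a witnessing injection $\Phi^{1\to 0}\colon 2^\setN\to\setN$. Consider the formula
\[
\psi(n) \;\equiv\; (\exists g^1)\bigl[(\forall k)(g(k)\le 1) \,\land\, \Phi(g) = n\bigr].
\]
This formula has a single existential function quantifier and uses $\Phi$ only as a parameter, so it is $\Sigma^1_1$ with a type-$2$ parameter. By the assumed comprehension scheme, the set $D = \{n : \psi(n)\}$ exists as a characteristic function. Thus $\Phi$ is an injection from $2^\setN$ to $\setN$ whose range $D$ exists, contradicting the first sentence of Proposition~\ref{p:norange}.

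The only subtlety is that Proposition~\ref{p:norange} is stated for $\rcaw + \QF\AC^{0,1}$, while here we have the base theory $\RCAo + \QF\AC^{0,1}$ together with additional comprehension; but since $\rcaw$ is included in any higher-order system we are considering, and $\QF\AC^{0,1}$ is among our hypotheses, the proposition applies verbatim. The one place a careful reader should check is that the proof of Proposition~\ref{p:norange}, though written for a bijection, in fact only uses that $\Phi$ is an injection whose range is given by a characteristic function: once $D$ exists, $\QF\AC^{0,1}$ selects preimages for each $n\in D$ and yields an enumeration of $2^\setN$, contradicting $\lnot\Cenum$.

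The main conceptual step — and the only one that is not purely formal — is recognizing that the range of $\Phi$ is definable by a $\Sigma^1_1$ formula in which the injection itself appears as a type-$2$ parameter, so that $\Sigma^1_1$ comprehension with parameters of type~$2$ is exactly the right strength to turn Proposition~\ref{p:norange} into a disproof of $\Cinj$. I do not expect any technical obstacle beyond that observation.
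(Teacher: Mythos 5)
Your proposal is correct and follows the same route as the paper: assume $\Cinj$, use $\Sigma^1_1$ comprehension with the injection $\Phi$ as a type-$2$ parameter to form its range, and then invoke Proposition~\ref{p:norange} for the contradiction. Your additional observations (the explicit $\Sigma^1_1$ formula and the remark that the argument of Proposition~\ref{p:norange} needs only an injection with an extant range) are accurate and simply make explicit what the paper's shorter proof leaves implicit.
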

\begin{proof}
  Assume $\Phi$ is a functional witnessing $\Cinj$.
  Applying $\Sigma^1_1$ comprehension with parameter $\Phi$, we can construct the range of~$\Phi$. We then obtain a contradiction from Proposition~\ref{p:norange}.
\end{proof}

A final point of interest is that the classically false principle $\Cinj$, although consistent with $\rcaw$, has nontrivial set existence strength. Normann and Sanders discuss the contrapositive of the following proposition in the guise of a ``trick'' related to excluded middle~\cite{ns-2020}*{\S3}.

\begin{proposition}\label{prop:injaca}
$\Cinj$ implies $(\exists^2)$ over $\RCAo^\omega$.
\end{proposition}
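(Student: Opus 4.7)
The plan is to exploit the injectivity of a $\Cinj$ functional $\Phi$ to decide equality in $2^\setN$, and then to encode the question ``does $f$ have a zero?'' as an equality question between two elements of $2^\setN$. The only real content is that $\rcaw$ is closed under $\lambda$-abstraction and primitive recursion, so we can build the relevant type-$1$ objects from $\Phi$ and $f$.

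Concretely, given an arbitrary $f\colon \setN \to \setN$, I would define a characteristic function
\[
g_f(n) = \begin{cases} 1 & \text{if } f(n) = 0, \\ 0 & \text{otherwise}, \end{cases}
\]
which exists by $\lambda$-abstraction since equality on $\setN$ is decidable. Then $g_f \in 2^\setN$, and $g_f$ is \emph{not} the constant zero function $\mathbf{0}$ if and only if $(\exists n)[f(n) = 0]$. Since $\Phi$ is injective, $g_f \neq \mathbf{0}$ is equivalent to $\Phi(g_f) \neq \Phi(\mathbf{0})$, which is a decidable condition on natural numbers.

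From this I would define the desired functional $\phi^{1\to 0}$ by setting $\phi(f) = 0$ when $\Phi(g_f) \neq \Phi(\mathbf{0})$ and $\phi(f) = 1$ otherwise. This $\phi$ is primitive recursive in $\Phi$ and $f$, so it exists in $\rcaw$ given $\Phi$, and by construction it satisfies $\phi(f) = 0 \leftrightarrow (\exists n)[f(n)=0]$, which is the principle $\prince$.

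There is no serious obstacle here; the proof is essentially a one-line encoding. The point worth emphasizing is simply that the reduction requires nothing more than $\lambda$-abstraction, primitive recursion, and decidability of equality on $\setN$, all of which are available in $\rcaw$, so no additional choice or comprehension is invoked beyond $\Cinj$ itself.
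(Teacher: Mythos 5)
Your proof is correct, but it takes a genuinely different route from the paper's. The paper observes that an injection $\Phi\colon 2^\setN\to\setN$ must be discontinuous at every point (any neighborhood in $2^\setN$ contains two distinct functions, which $\Phi$ must separate) and then invokes Proposition~3.7 of Kohlenbach, i.e.\ Grilliot's trick, to conclude that the existence of a discontinuous functional yields $\prince$. You instead build the $\exists^2$ functional directly: you encode ``does $f$ have a zero?'' as the question of whether $g_f$ equals the constant zero function $\mathbf{0}$, and use injectivity of $\Phi$ (together with extensionality of $\mathsf{E}\text{-}\mathsf{PRA}^\omega$, which gives $g_f = \mathbf{0} \to \Phi(g_f)=\Phi(\mathbf{0})$, the direction you are implicitly using alongside injectivity for the converse) to reduce that to a decidable comparison of two natural numbers. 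This is exactly the Normann--Sanders ``trick'' related to excluded middle that the paper mentions in the sentence immediately preceding the proposition. What your approach buys is self-containedness: it needs nothing beyond $\lambda$-abstraction and decidable equality on $\setN$, and it exhibits the realizer for $\prince$ explicitly. What the paper's approach buys is brevity given its infrastructure --- Kohlenbach's Proposition~3.7 is quoted and reused repeatedly for reversals throughout the paper, so phrasing this proof as ``$\Phi$ is discontinuous, hence $\prince$'' fits the paper's uniform template. Both arguments are sound.
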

\begin{proof}
If $\Phi$ is an injection from $2^\setN$ to $\setN$ then $\Phi$ is discontinuous at every point (here we identify elements of $\setN$ with constant functions from $\setN$ to~$\setN$). The existence of a discontinuous functional implies $(\exists^2)$ by Proposition~3.7 of Kohlenbach~\cite{koh05}.
\end{proof}

Thus, for example, there is no model of $\rcaw$ in which $\Cinj$ holds and every
element of $2^\setN$ is computable.   

\section{Bounding calculations of type 1 functions}\label{sec:bounding}

This section contains several technical lemmas related to the range of a function
$f \colon \setN \to \setN$.  Each function of this type has a number of auxiliary functions related to its range.  The most obvious is the characteristic function for the range.  We write $\rho (f,g)$ as shorthand for the formula asserting that $g$ is the characteristic function for the range of $f$.  More formally,
\[
\rho(f,g) \text{~is~} (\forall n)[(\exists m)(f(m)=n) \leftrightarrow g(n)>0 ].
\]
A bounding function can also be used to compute the range of $f$.  We write
$\beta (f,g)$ for the formula asserting that $g$ is such a bounding function.  Formally,
\[
\beta (f,g) \text{~is~}
(\forall n )[(\exists m)(f(m)=n) \leftrightarrow (\exists t \le g(n))(f(t)=n)].
\]

The results below address the problem of converting between the characteristic function
for the range of a function and a bounding function for the range, and the amount of uniformity present in the conversion.  In the second order setting, principles asserting the existence of characteristic functions and the existence of bounding functions are interchangeable, as shown by the following two results.  

\begin{proposition}[$\RCAo$]\label{prop:trans1}
 For all $f\colon \setN \to \setN$, ($\exists g)\, \rho(f,g) \leftrightarrow (\exists h)\, \beta(f,h)$.
\end{proposition}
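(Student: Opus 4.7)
The plan is to prove each direction by defining the required auxiliary function using only $\Delta^0_1$ comprehension, which is available in $\RCAo$.

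For the direction $(\exists g)\,\rho(f,g) \to (\exists h)\,\beta(f,h)$, given $g$ with $\rho(f,g)$, I would define
\[
h(n) = \begin{cases} \mu m\,[f(m)=n] & \text{if } g(n)>0,\\ 0 & \text{otherwise.}\end{cases}
\]
Because $g(n)>0$ signals that $n$ lies in the range of $f$, the unbounded search terminates whenever it is invoked, so $h$ is total. Its graph is $\Delta^0_1$ in $f \oplus g$, so $h$ exists in $\RCAo$. Verifying $\beta(f,h)$ is immediate: when $n$ is in the range, the witness $m = h(n) \le h(n)$ lies in the bounded search; conversely, any $t \le h(n)$ with $f(t)=n$ trivially shows $n$ is in the range.

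For the direction $(\exists h)\,\beta(f,h) \to (\exists g)\,\rho(f,g)$, given $h$ with $\beta(f,h)$, I would define
\[
g(n) = \begin{cases} 1 & \text{if } (\exists t \le h(n))(f(t)=n),\\ 0 & \text{otherwise.}\end{cases}
\]
The defining condition is bounded, hence $\Sigma^0_0$ in $f \oplus h$, so $g$ exists by $\Delta^0_1$ comprehension in $\RCAo$. The equivalence $g(n)>0 \leftrightarrow (\exists m)(f(m)=n)$ then falls out directly from $\beta(f,h)$.

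Honestly there is no real obstacle here: both directions reduce to terminating or bounded search, and both defined functions are $\Delta^0_1$ relative to parameters the theory already has in hand. The only subtlety worth flagging is that this equivalence is pointwise in $f$; when one tries to Skolemize, producing a functional that maps $f \oplus g$ to $h$ (and vice versa) uniformly, the forward direction requires an unbounded search that cannot in general be carried out by a primitive recursive functional, so one expects the uniform versions of the two principles to diverge. That asymmetry is presumably the point of the subsequent propositions in this section, but it does not affect the pointwise statement under consideration.
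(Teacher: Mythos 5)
Your proof is correct and follows essentially the same route as the paper: the forward direction defines $h$ by a terminating unbounded search guarded by $g$ (the paper writes this as $h(n)=(\mu\,t)(g(n)=0\lor f(t)=n)$, which computes the same function as your case split), and the backward direction uses the identical bounded-search definition of $g$. Your closing remark about the failure of uniformity in the Skolemized forward direction correctly anticipates the point of the subsequent propositions.
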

 \begin{proof}
   Working in $\RCAo$, suppose $g$ is a characteristic function for the range of a function~$f$.  Then
   %the function
   $h (n) = ( \mu\, t)  (g(n)=0 \lor f(t) = n)$ is the desired bounding function and exists by
 recursive comprehension.
 
 Now suppose that $h$ is a bounding function for the range of~$f$.  The
 characteristic function $g\colon \setN \to \setN$ can be defined by the formula
 \[
 g(n) =
 \begin{cases}
 1, & {\text {if~}}(\exists t\le h (n) ) (f(t)=n),\\
 0, & {\text {otherwise}},
 \end{cases}
 \]
 % is the desired characteristic function and exists by recursive comprehension.
and hence $g$  exists by recursive comprehension.
 \end{proof}
 
 The relationship between characteristic and bounding functions is uniform in the
 sense that $\RCAo$ proves the sequential extension of the previous result.
 
 \begin{proposition}[$\RCAo$]
   For every sequence $\langle f_i \rangle _{i \in \setN }$ of functions
      from $\setN$ to~$\setN$, we have
   \[
   (\exists \langle g_i \rangle _ {i \in \setN})( \forall n)\, \rho (f_n , g_n )
 \leftrightarrow
 (\exists \langle h_i \rangle _ {i \in \setN})(\forall n)\, \beta (f_n , h_n ).
 \]
 \end{proposition}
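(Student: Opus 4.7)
The plan is to observe that both constructions carried out in the proof of Proposition~\ref{prop:trans1} are uniformly recursive in the data, and then to package them as sequences using the standard pairing convention adopted in Section~\ref{sec:formal}. Recall that a sequence $\langle g_i\rangle_{i\in\setN}$ is just a function $g\colon \setN\times\setN\to\setN$ with $g_i(m)=g(\langle i,m\rangle)$, so producing a sequence amounts to producing a single type-$1$ function defined by a formula that recursive comprehension can handle.

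For the forward direction, I would assume $\langle g_i\rangle$ exists with $\rho(f_n,g_n)$ for all $n$ and define
\[
h(\langle n,k\rangle) = (\mu\, t)\bigl(g_n(k)=0 \;\lor\; f_n(t)=k\bigr).
\]
The $\mu$-operator is guaranteed to halt: if $g_n(k)=0$ the search returns $0$ immediately, and otherwise $\rho(f_n,g_n)$ guarantees that $k$ is in the range of $f_n$ so some witness $t$ exists. Since this defines $h$ by bounded search from the parameters $f$ and $g$, the sequence $\langle h_i\rangle$ exists by recursive comprehension, and unwinding the definition shows $\beta(f_n,h_n)$ for every $n$.

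For the backward direction, assume $\langle h_i\rangle$ exists with $\beta(f_n,h_n)$ for all $n$ and define
\[
g(\langle n,k\rangle) = \begin{cases} 1, & \text{if } (\exists t \le h_n(k))(f_n(t)=k),\\ 0, & \text{otherwise}.\end{cases}
\]
The defining matrix is bounded and hence $\Delta^0_0$ in the parameters $f$ and $h$, so $g$ exists by recursive comprehension; the characterization $\beta(f_n,h_n)$ then yields $\rho(f_n,g_n)$ for every $n$.

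There is no serious obstacle here: both conversions in the non-sequential proof are primitive recursive in their data, and the only substantive point is to check that passing to a sequence adds only an index variable that can be absorbed into the pairing. The proof is essentially a remark that the arguments of Proposition~\ref{prop:trans1} go through uniformly.
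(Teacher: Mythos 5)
Your proposal is correct and follows essentially the same route as the paper: both reindex the sequence as a single function of two variables and rerun the two constructions from the non-sequential proposition with the extra index absorbed into the definition, appealing to recursive comprehension. The only quibble is that the $\mu$-search in the forward direction is an unbounded (though provably terminating) search rather than a ``bounded search,'' but the existence of the resulting function by recursive comprehension is justified exactly as you argue.
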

 
 \begin{proof}
We will write 
   $\langle f_i \rangle _{i \in \setN }$ as a function of two variables,
 so $f(i,n) = f_i (n)$.  Adapting the proof of the preceding result, write
 \[
 h (i,n) = (\mu\, t)( g(i,n) = 0 \lor f(i,t) = n)
 \]
 and
 \[
 g(i,n) =
  \begin{cases}
 1, & {\text {if~}}(\exists t\le h (i,n)) (f(i,t)=n),\\
 0, & {\text {otherwise}},
 \end{cases}
 \]
 to translate the sequences of auxiliary functions in $\RCAo$.
 \end{proof}
 
 Third order arithmetic can formalize ``translating functionals'' of type~2 to convert between characteristic functions
 and bounding functions for ranges.  Principles asserting the existence of the translating functionals provide additional
 examples of Skolemized uniformity, distinct from the sequential uniformity often considered in second order settings.

As shown below, the existence of a translating functional from bounding
 functions to characteristic functions can be proved in $\rcaw$.  However, the
 reverse translation functional requires stronger assumptions.  Thus the
 interchangeability of the two sorts of auxiliary functions witnessed in
 the second order setting by the previous two propositions does not extend to Skolemized functional formulations in
 third order arithmetic.
 
 \begin{proposition}[$\rcaw $]  There is a functional $T_{\beta \to \rho}$ of type $1\to1$ that translates bounding functions into characteristic functions for ranges.  That is, 
 \[
 (\exists T_{\beta \to \rho} )(\forall f)(\forall g) [ \beta(f,g) \to \rho (f, T_{\beta \to \rho} (f,g))].
 \]
 \end{proposition}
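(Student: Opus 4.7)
The plan is to define $T_{\beta \to \rho}$ explicitly using $\lambda$ abstraction and bounded search, both of which are available as term-forming operations in $\rcaw$. Specifically, given inputs $f$ and $g$, the functional is to return the function
\[
n \mapsto \begin{cases} 1 & \text{if } (\exists t \le g(n))(f(t) = n), \\ 0 & \text{otherwise.} \end{cases}
\]
The right-hand side is the same construction used in the second half of the proof of Proposition~\ref{prop:trans1}, so once the functional is defined we already know, under the hypothesis $\beta(f,g)$, that its output is a characteristic function for the range of~$f$.

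The first step is to observe that bounded existential quantification over a primitive recursive predicate is itself primitive recursive. Since the predicate $f(t) = n$ is decidable (indeed, a basic operation in the term language) and the quantifier is bounded by the value $g(n)$, one can define a term $p(f,g,n)$ of type~$0$ computing the characteristic value displayed above; this uses only the primitive recursive schemes guaranteed by $\mathsf{E}\text{-}\mathsf{PRA}^\omega$. The second step is to apply $\lambda$-abstraction twice: first to form the type-$1$ function $\lambda n.\, p(f,g,n)$, and then to form the type-$1 \to 1$ functional $\lambda f.\lambda g.\,\lambda n.\, p(f,g,n)$. This functional is named $T_{\beta \to \rho}$.

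The final step is the verification: assuming $\beta(f,g)$ holds for a particular pair, we must check that for every~$n$,
\[
(\exists m)(f(m) = n) \leftrightarrow T_{\beta \to \rho}(f,g)(n) > 0.
\]
This is immediate from unwinding the definition: $T_{\beta \to \rho}(f,g)(n) > 0$ iff $(\exists t \le g(n))(f(t) = n)$, which by the assumption $\beta(f,g)$ is equivalent to $(\exists m)(f(m) = n)$.

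I do not anticipate a substantive obstacle. No choice principle is needed because the construction is an explicit term of $\rcaw$, and the uniformity in $f$ and $g$ is built into the $\lambda$-abstraction step. The point of the proposition is essentially that the second-order construction from Proposition~\ref{prop:trans1} is already primitive recursive in its data, so it lifts verbatim to a type-$(1 \to 1)$ functional in the higher order setting.
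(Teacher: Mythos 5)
Your proposal is correct and matches the paper's construction: both define $T_{\beta\to\rho}(f,g)$ as $\lambda n.$ applied to the value of the bounded search $(\exists t\le g(n))(f(t)=n)$, and both verify correctness by unwinding the hypothesis $\beta(f,g)$. The only difference is in how the underlying functional is obtained --- the paper applies $\QF\AC^{1,0}$ to the quantifier-free defining formula (noting the bounded quantifier is absorbed by a primitive recursive functional), whereas you construct it directly as a primitive recursive term and correctly observe that no choice principle is needed; both justifications are valid in $\rcaw$.
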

 
 \begin{proof}
 Working in $\rcaw$, by $\QF\AC^{1,0}$ there is a functional
 $Y$ from  $\setN^{<\setN}\times \setN^{<\setN}\times \setN$ to $\{0,1\}$
 such that
 %$Y(f,g,n)$ is either $0$ or $1$, and
 $Y(f,g,n)=1$ if and only if $(\exists t \le g(n))[f(t) = n]$.
 Note that the defining formula is quantifier free because the
 bounded quantifier can be rewritten using a primitive recursive functional. 
 The desired functional $T_{\beta\to\rho} (f,g)$
 %mapping the pair $f$ and $g$ to the
 %function mapping $n$ to $Y(f,g,n)$ exists by $\lambda$ abstraction.
is then 
 \[
 T_{\beta\to\rho}(f,g) = \lambda n.Y(f,g,n).\qedhere
 \]
 \end{proof}
 
 \begin{proposition}[$\rcaw$]  The following are equivalent:
 \begin{enumerate}
 \item $\prince$.\label{K4.4.1}
 \item There is a functional $T_{\rho\to \beta}$ of type $1\to1$ that translates
 characteristic functions for ranges into bounding functions.  That is:\label{K4.4.2}
 \[
 (\forall f)( \forall g )[\rho (f,g) \to \beta (f, T_{\rho \to \beta} (f,g) )].
 \]
 \end{enumerate}
 \end{proposition}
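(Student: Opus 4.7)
The plan is to establish both directions, with (2)$\Rightarrow$(1) the main work. For (1)$\Rightarrow$(2), Proposition~\ref{K3.9} supplies $\mu$. I will define $T_{\rho \to \beta}(f, g)(n)$ to be the least $t$ such that $g(n) = 0$ or $f(t) = n$, obtained by applying $\mu$ to the primitive recursive function of $t$ that returns $0$ when $g(n) = 0 \vee f(t) = n$ and $1$ otherwise. When $\rho(f, g)$ holds, either $g(n) = 0$ (so $t = 0$ suffices) or $n \in \mathrm{range}(f)$ (so a witness exists), yielding $\beta(f, T_{\rho \to \beta}(f, g))$.

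For (2)$\Rightarrow$(1), I will use $T_{\rho \to \beta}$ to decide $(\exists n)(f(n) = 0)$ uniformly in $f$, which yields $\princmuo$ and hence $\prince$ by Proposition~\ref{K3.9}. Given $f$, let $F_f(t) = 0$ when $f(t) = 0$ and $F_f(t) = t + 1$ otherwise; both $F_f$ and the function $G_f$ defined by $G_f(0) = 1$ and, for $v \geq 1$, $G_f(v) = 1$ iff $f(v-1) \neq 0$, are available in $\rcaw$ by $\lambda$-abstraction and primitive recursion. A direct check shows that $v \in \mathrm{range}(F_f)$ iff $f(v-1) \neq 0$ for $v \geq 1$, while $0 \in \mathrm{range}(F_f)$ iff $f$ has a zero. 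Thus $G_f$ agrees with the true characteristic function of the range of $F_f$ at every $v \geq 1$ and agrees at $v = 0$ precisely when $f$ has a zero, so $\rho(F_f, G_f)$ holds iff $f$ has a zero. Letting $h_f = T_{\rho \to \beta}(F_f, G_f)$, I will decide $(\exists n)(f(n) = 0)$ using the bounded test $(\exists t \leq h_f(0))(f(t) = 0)$.

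The main obstacle is justifying the correctness of this test even when $\rho(F_f, G_f)$ fails and the output of $T_{\rho \to \beta}$ is therefore entirely unspecified. The resolution is asymmetric: the failure case is exactly the situation in which $f$ has no zero, and then $F_f(t) = t + 1 \neq 0$ for every $t$ directly from the definition, so the bounded test vacuously fails no matter what value $T_{\rho \to \beta}$ assigns to $h_f(0)$. In the complementary case, $\rho$ holds, $h_f$ is a genuine bounding function, and since $0 \in \mathrm{range}(F_f)$ some $t \leq h_f(0)$ must satisfy $F_f(t) = 0$, i.e., $f(t) = 0$. The bounded test therefore decides $(\exists n)(f(n) = 0)$ uniformly in $f$.
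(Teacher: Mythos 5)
Your proof is correct, and while your forward direction matches the paper's (both apply $\mu$, obtained from Proposition~\ref{K3.9}, to a function of $t$ whose zeros witness membership of $n$ in the range; the paper simply discards $g$ altogether, whereas you fold the test $g(n)=0$ into the search), your reversal takes a genuinely different route. The paper argues that any $T_{\rho\to\beta}$ must fail to be $\varepsilon$-$\delta$ continuous: it feeds $T_{\rho\to\beta}$ a pair $(f_1,g_2)$ for which $\rho$ fails, reads off the value $b = T_{\rho\to\beta}(f_1,g_2)(0)$, and perturbs $f_1$ to a nearby $f_2$ for which $g_2$ \emph{is} a correct characteristic function but $b$ is too small to bound the witness; $\prince$ then follows from Proposition~\ref{K3.7} (Kohlenbach's formalization of Grilliot's trick). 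You instead build the realizer for $\prince$ explicitly: the pair $(F_f,G_f)$ is engineered so that $\rho(F_f,G_f)$ holds exactly when $f$ has a zero, and your key observation---that in the failure case the bounded test $(\exists t\le h_f(0))(f(t)=0)$ is vacuously false because $f$ has no zero at all, so the unspecified output of $T_{\rho\to\beta}$ is harmless---is exactly the point that needs to be made and is made correctly. Your route is more elementary, avoiding the continuity machinery of Proposition~\ref{K3.7} entirely and producing the decision functional (and hence $\mu_0$, via the bounded search for a witness) by direct composition in $\rcaw$; the paper's route is shorter to state given that Proposition~\ref{K3.7} is already on hand and follows a discontinuity template reused in several other reversals in the paper. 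Both establish the equivalence.
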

 
 \begin{proof}
   To prove that (\ref{K4.4.1}) implies~(\ref{K4.4.2}), assume $\rcaw + \prince$.  The base system $\rcaw$ suffices to prove the existence of the functional $\chi$ which takes the (type $1$ code for the) pair $(f,n)$ and maps it to the function $f_n\colon \setN \to 2$ satisfying $f_n(t) = 0 \leftrightarrow f(t)=n$.  By Proposition~3.9 of Kohlenbach~\cite{koh05}, %the principle
   $\prince$ implies the existence of Feferman's $\mu$ functional satisfying the formula
% This $\mu$ functional satisfies the formula:
 \[
 (\forall f) [(\exists x) (f(x)=0) \to f(\mu (f)) = 0 ].
 \]
 The function $T_{\beta\to \rho} (f,g) = \mu (\chi (f,n))$ is the desired bounding function.
 
In the proof of the preceding implication, the principle $\prince$ is sufficiently strong that we can discard the given characteristic function and calculate the bounding function directly from $f$ and~$\exists_2$. We now show we can calculate
$\exists_2$ from any translation functional $T_{\rho \to \beta}$ in the reverse direction.

To prove that (\ref{K4.4.2}) implies (\ref{K4.4.1}), suppose $T_{\rho \to \beta}$ is a translation functional as described in (2). We will show that this functional is not $\varepsilon\text{-}\delta$ continuous in the sense of Definition~3.5 of Kohlenbach~\cite{koh05}.

We can view inputs $f$ and $g$ as a single sequence
$\langle f(0), g(0), f(1), g(1) \dots\rangle$ and use the usual Baire space topology.
The functional $T_{\rho \to \beta}$ is defined for every input of two type 1 arguments, including inputs $f$ and $g$ for which $g$ is not a a characteristic function for the range of $f$.  For example, let $f_1\colon \setN \to \setN$ satisfy $f_1 (n) = 1$ for all $n$.  Let $g_2\colon  \setN \to \setN$ satisfy $g_2 (0) = g_2 ( 1) = 1$ and $g_2(n)=0$ otherwise.
Then $g_2$ is not a correct characteristic function for $f_1$.  However, $T_{\rho\to\beta}(f_1 , g_2 ) = h$ for some totally defined type 1 function $h$, and $h(0)=b$ for some value $b$.

Suppose by way of contradiction that $T_{\rho \to \beta}$ is $\varepsilon\text{-}\delta$ continuous.  Then for every pair $(f,g)$ in some neighborhood $N$ of $(f_1 , g_2 )$ we must have $T_{\rho \to \beta}(f,g) (0) = b$.  Let $f_2\colon \setN \to 2$ be a function that is $1$ for every $t\le b$, outputs a sufficient number of ones so that $(f_2 , g_2)$ is in the neighborhood $N$, and is eventually constantly zero.  Then $g_2$ is a correct characteristic function for $f_2$, so $\rho(f_2 , g_2)$ holds. However, $T_{\rho \to \beta} (f_2 , g_2 ) (0) = b$.  Thus $T_{\rho\to\beta}(f_2 ,g_2 )$ is not a bounding function for $f_2$ because $(\exists t)[ f_2(t)=0]$ but $(\forall t \le T_{\rho \to \beta}(f_2 ,g_2) (0))[f_2 (t) \neq 0]$.  Thus $\beta(f_2,T_{\rho \to \beta}(f_2 ,g_2 ))$ fails.  This contradicts the implication given in item (\ref{K4.4.2}) of the proposition. Thus $T_{\rho\to \beta}$ must not be $\varepsilon\text{-}\delta$ continuous.  By Proposition~3.7 of Kohlenbach~\cite{koh05}, $\prince$ follows.
 \end{proof}
 
 Let $R$ be the Weihrauch problem taking a type 1 function as an input and yielding output consisting of the characteristic function of the range of the input.
 Let $B$ be the Weihrauch problem that outputs  bounding functions as described above.
 Ideas from the proof of Proposition \ref{prop:trans1} can be adapted to show that $R$ and $B$ are weakly Weihrauch equivalent, and strongly Weihrauch incomparable.
 Summarizing, analyses based on sequential second order statements, Skolemized higher order statements, and Weihrauch reducibility
 yield different results.  This indicates that there are three distinct notions of uniformity considered here.
 %Again, the distinctions between the auxiliary functions differ from those made in the higher order reverse mathematics setting.
 %However, for the total continuations ${\sf T}R$ and ${\sf T}B$, we conjecture that ${\sf T}R \le_{W} {\sf T}B$ and ${\sf T}B \not\le_{W} {\sf T}R$.
 %It may be that functional formulations in higher order reverse mathematics behave somewhat like total continuations in Weihrauch analysis,
 %though perhaps with even less resource sensitivity than traditional reverse mathematics.
 
%\section{$(\llpo)$ implies $\prince$}
\section{Realizers for omniscience principles}\label{sec:realize}
 
The principle $\prince$ is closely related to a certain formulation of the limited principle of omniscience. The Weihrauch problem $\lpo$ asks for a realizer that determines whether an infinite sequence of natural numbers contains a zero. Indeed, the definition of $\prince$ could be rewritten as
\[
\prince\colon (\exists R_\lpo)(\forall f^1)[R_\lpo (f)=0 \leftrightarrow (\exists n) (f(n)=0)]
\]
to emphasize $\prince$ asserts the existence of a realizer for this problem,

The Weihrauch problem $\llpo$, related to the lesser limited principle of omniscience, asks for a realizer to identify a parity (even or odd) on which a sequence of numbers is zero, assuming either that all even positions are zero or all odd positions are zero.
We will use a principle asserting the existence of a realizer for~$\llpo$:
\begin{align*}
  (\llpo)\colon (\exists R_\llpo \leq 1)(\forall f^1)
  (  [ (\forall n)(f(2n)&=0) \lor (\forall n)(f(2n+1)=0)] \\
&  \to (\forall n)[f(2n + R_{\llpo}(f)) = 0]).
\end{align*}
Often it is more convenient to work with an equivalent form that asks for the parity of the first location where a sequence is zero, if there is such a location:
\begin{align*}
(\llpomin)\colon (\exists R_{\llpomin})&(\forall f^1)(\forall n)\\
&  [f(n)=0 
    \to R_{\llpomin} (f) \equiv_{\text{mod~} 2} (\mu\, t \le n)
    ( f(t) =0 )].
\end{align*}

For example, suppose $f= \langle 1,0,1,0,0 \dots \rangle$
denotes the infinite sequence consisting of $1, 0, 1$ followed by all zeros.
Then $R_\lpo (f) = 1$ because the sequence contains a $0$; 
$R_{\llpo} (f) = 1$ because $f(2n+1) = 0$ for all $n$; and $R_{\llpomin}(f) = 1$ because the first zero occurs in position $1$, which is odd.
For the sequence $g = \langle 1,1,0,0,\ldots\rangle$,
$R_\lpo(g) = 1$; $R_{\llpomin} = 0$ because the first zero occurs at position $2$, which is even; and the value of $R_{\llpo}$ is not determined by its defining formula.

One motivation of $\llpomin$ is that its value is determined for every sequence that includes a zero. The next proposition shows that $\llpo$ and $\llpomin$ are equivalent for our purposes.
For Weihrauch problems $\mathsf{P}$ and $\mathsf{Q}$ expressible in the language of
$\rcaw$, we say that $\rcaw$ proves $\mathsf{P} \leq_{\text{sW}} \mathsf{Q}$ if $\rcaw$
  proves there are functionals $\phi, \psi\colon \setN^\setN \to \setN^\setN$ such that, for every realizer $R_{\mathsf{Q}}$ of $\mathsf{Q}$, the functional 
  $\psi \circ R_{\mathsf{Q}} \circ \phi$ is a realizer of~$\mathsf{P}$.
  
%Versions of the Weihrauch problems $\lpo$ and $\llpo$ were first studied by Brattka and Gherardi~\cite{bg-2011}.  They use a slightly different formulation of $\llpo$, which we call $\llpo_{2}$ below.

\begin{proposition}\label{p:llpoequiv}
  $\rcaw$ proves that the problems $\llpo$ and $\llpomin$ are strongly Weihrauch equivalent, and
  that the principles $(\llpo )$ and $( \llpomin )$ are equivalent.
\end{proposition}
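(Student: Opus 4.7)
The plan is to exhibit explicit primitive recursive functionals $\phi$ and $\psi$ witnessing each direction of the strong Weihrauch reduction, and then to read off the principle equivalence as an immediate corollary. Both reductions will have the same shape: the preprocessor $\phi$ will encode the promise structure of the target problem into the locations of the zeros of $g := \phi(f)$, and $\psi$ will simply extract a parity bit from the realizer's output. Because the functionals available in $\rcaw$ must operate continuously on finite data, the real design work lies in $\phi$: it must force the first zero of $g$ onto the parity corresponding to a valid answer in the nontrivial cases, and it must be allowed to ``miss'' the $\llpomin$ promise exactly when both disjuncts of the $\llpo$ promise hold simultaneously, so that any output produced by $\psi$ is automatically admissible.

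For $\llpomin \leq_{\text{sW}} \llpo$, I would set $g(n) = 1$ when there exists $k < n$ with $f(k) = 0$, $k \not\equiv n \pmod 2$, and $f(j) \neq 0$ for every $j < k$, and $g(n) = 0$ otherwise. A direct case analysis shows that if the first zero of $f$ lies at position $m$, then every position of $g$ of parity $m \bmod 2$ equals $0$, while positions of the opposite parity greater than $m$ equal $1$; hence $g$ satisfies the $\llpo$ promise with the unique answer $m \bmod 2$. Taking $\psi$ to be the identity on $\{0,1\}$ then reads off the parity of the first zero of $f$ from the output of any realizer of $\llpo$ applied to $g$.

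For $\llpo \leq_{\text{sW}} \llpomin$ I would define $g(2n) = 0$ iff some $f(2j+1)$ with $j < n$ is nonzero, and $g(2n+1) = 0$ iff some $f(2j)$ with $j \leq n$ is nonzero, so that each position of $g$ records a witnessed opposite-parity nonzero of $f$. Three cases arise: when $f$ has all even values zero and some odd value nonzero, the first zero of $g$ is forced to an even position; symmetrically, when all odd values of $f$ are zero and some even value is nonzero, the first zero of $g$ lies at an odd position; and when $f \equiv 0$, the sequence $g$ is identically $1$. Setting $\psi(n) = n \bmod 2$ delivers the correct answer on the nose in the first two cases. The main obstacle is the degenerate third case, where $g$ has no zero and the realizer's value is unconstrained; I would handle this by noting that $f \equiv 0$ satisfies both disjuncts of the $\llpo$ promise, so either $0$ or $1$ is admissible and whatever bit $\psi$ returns is valid.

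The principle equivalence $(\llpo) \leftrightarrow (\llpomin)$ then follows at once: given a realizer for the target problem of either reduction, composing with the fixed pair $(\phi, \psi)$ produces a realizer for the source problem. Because $\phi$ and $\psi$ are defined by bounded search over finite initial segments, $\rcaw$ suffices both for proving their existence and for verifying uniformly in the realizer that each composition satisfies the defining formula of the source principle.
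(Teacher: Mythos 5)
Your proof is correct and takes essentially the same approach as the paper's: exhibit explicit primitive recursive preprocessing and postprocessing functionals witnessing each strong Weihrauch reduction, handle the degenerate identically-zero case by noting that both answers are then admissible, and obtain the equivalence of the principles $(\llpo)$ and $(\llpomin)$ by composing an arbitrary realizer with these fixed functionals. Your concrete witnesses differ in minor ways from the paper's (the paper preprocesses by pointwise complementation $0 \mapsto 1$, $n \mapsto 0$ combined with truncation after the first zero, and puts the parity flip in the postprocessor, whereas you fold the parity bookkeeping into the preprocessors), but both choices are valid and the verification is the same.
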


\begin{proof}
  First, assume $R$ is a realizer for $\llpomin$. Define a preprocessing
  function $h\colon \setN \to \setN$ such that $h(n) = 0$ if $n \not = 0$
  and $h(0) = 1$. Define a postprocessing function $w(n) = 1 - (n \operatorname{mod} 2)$.
  Then
  $
  S = w \circ R \circ h
  $
  is a realizer for $\llpo$.
  
  To see this, assume $g$ is an instance of $\llpo$. If $g$ is identically zero,
  then whichever value in $\{0,1\}$ is produced by~$S$ is acceptable. If
  $g$ is not identically zero, then $h\circ g$ is zero on exactly the inputs
  where $g$ is nonzero. Thus $R(h \circ g)$ is the parity of the first location
  where $g$ is nonzero, and $w\circ R(h \circ g)$ is the parity for which $g$ is always zero.
  This shows $\llpo \leq_{\text{sW}} \llpomin$.
  
  Conversely, suppose $S$ is a realizer for $\llpo$.  Define a preprocessing function
  $J \colon \setN^\setN \to \setN^\setN$ such that, for $f \in \setN^\setN$,
  \[
  J(f)(n) = \begin{cases}
    1, & \text{if } (\exists m < n)(f(m) = 0),\\
    f(n), & \text{otherwise}.
  \end{cases}
  \]
  Thus $J(f)$ and $f$ agree through the first zero of $f$, but afterwards $J(f)$ takes
  only the value $1$.  Hence there is at most one input $k$
  for which $h(J(f))(k)$ is nonzero, and if there is such a $k$ then it is the least
  input for which $f(k) = 0$.  This means that $h(J(f))$ is in the domain of $\llpo$, and 
  $S(h(J(f)))$ produces the parity of $k$. Thus $S \circ (h \circ J)$ is a realizer for~$\llpomin$. We have shown $\llpomin \leq_{\text{sW}} \llpo$.

  The preprocessing and postprocessing functionals in this argument can all be formed in $\rcaw$, which can verify the correctness of the argument. None of the postprocessing functions require access to the original instance of a problem. Hence $\rcaw$ proves
that  $\llpo$ or $\llpomin$ are strongly Weihrauch equivalent.

Concatenation of the preprocessing and postprocessing functionals with any $\llpo$ realizer yields an $\llpomin$ realizer, so
the principle $(\llpo )$ implies the principle $( \llpomin)$.  The converse follows in a similar fashion.
\end{proof}

%Given this equivalence, we will often use $\llpo$ to refer to $\llpomin$, to simplify notation.

Results of Weihrauch analysis include $\llpo <_W \lpo$ and
the parallelized form $\widehat{\mathsf{LLPO}} <_W \widehat{\mathsf{LPO}}$.
See Weihrauch~\cite{W-1992}*{\S4} and
Brattka and Gherardi~\cite{bg-2011}*{Theorem~7.13} for proofs.  Consequently, the following
result may intially be surprising.   The underlying difference is that
Weihrauch reducibility requires a single reduction that works for all realizers;
the argument below breaks into cases depending on the behavior of the realizer.

 \begin{proposition}[$\rcaw$]\label{llpoimpliesE2}
    $(\llpo )$ implies $\prince$.
\end{proposition}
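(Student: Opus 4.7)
The plan is to argue that every realizer $R$ witnessing $(\llpo)$ is itself a discontinuous functional of type $1 \to 0$, and then invoke Proposition 3.7 of Kohlenbach~\cite{koh05} to conclude $\prince$, paralleling the strategy used in Proposition~\ref{prop:injaca}. This also explains why the implication does not contradict the Weihrauch separation $\llpo <_{\mathrm W} \lpo$ mentioned just before the statement: the construction below is allowed to branch on the value of $R$ at a particular input, whereas a strong Weihrauch reduction must commit to a single pair of pre- and post-processing functionals uniformly in the realizer.

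Fix a realizer $R$ for $\llpo$ and set $b := R(\lambda n.0) \in \{0,1\}$; this value is well defined because the constantly zero function lies in the domain of the $\llpo$ promise. For each $N \in \setN$, the central step is to produce, using $R$ as a parameter, an input that coincides with $\lambda n.0$ on $[0,N)$ yet forces $R$ to return $1-b$. Define, by $\lambda$-abstraction inside $\rcaw$,
\[
h_N(n) = \begin{cases} 0 & \text{if } n < N, \\ 0 & \text{if } n \geq N \text{ and } n \equiv 1 - b \pmod{2}, \\ 1 & \text{otherwise.} \end{cases}
\]
Every position of parity $1-b$ evaluates to $0$ under $h_N$, so the disjunct of the $\llpo$ premise corresponding to that parity holds. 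On the other hand, $h_N(n) = 1$ whenever $n \geq N$ and $n$ has parity $b$, so the disjunct corresponding to parity $b$ fails. The defining clause of $R_\llpo$ therefore forces $R(h_N) = 1 - b$, while $h_N$ agrees with $\lambda n.0$ on $[0,N)$. Since $N$ was arbitrary, $R$ is not $\varepsilon\text{-}\delta$ continuous at $\lambda n.0$ in the sense of Definition~3.5 of Kohlenbach~\cite{koh05}.

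Applying Proposition 3.7 of Kohlenbach to the discontinuous functional $R$ yields $\prince$, completing the proof. The only delicate point is ensuring that $h_N$ really falls inside the promise of $\llpo$, rather than in the region where $R$ could return either value with no contradiction; this is exactly what the observation "exactly one of the two disjuncts holds" guarantees, and I would expect this verification to be the main thing a reader wants to see spelled out carefully.
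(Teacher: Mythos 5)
Your proof is correct and takes essentially the same approach as the paper's: fix the realizer's value $b$ at a single reference input, construct a sequence of valid instances converging to that input on which the realizer is forced to output $1-b$, and conclude $\prince$ from the resulting discontinuity via Proposition~3.7 of Kohlenbach. The only cosmetic difference is that you work with $\llpo$ directly at the all-zeros function, whereas the paper first passes to $\llpomin$ (via Proposition~\ref{p:llpoequiv}) and diagonalizes at the all-ones function; the case split on the realizer's value, which is what circumvents the Weihrauch separation, is identical in both arguments.
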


\begin{proof}
  Working in $\rcaw$, 
  by Proposition \ref{p:llpoequiv},
  it is sufficient to assume the existence of $R = R_\llpomin$ as provided by $(\llpomin)$, and prove $\prince$ holds.

  Let $f= \langle 1,1,1\dots \rangle$ be the infinite
sequence of ones.  Our goal is to show that $R$ is sequentially discontinuous at $f$.  We will construct
a sequence $\langle g_n \rangle$ such that $\lim_{n \to \infty} g_n = f$ and for each $n$,
$R (g_n)$ disagrees with $R (f)$.  In particular, if $R(f) = 1$, we want
$R (g_n ) = 0$ for all $n$, so we define $g_n$ as a sequence of $2 + 2n$ ones followed by all zeros.
On the other hand, if $R (g_n ) = 1$, we want $R(g_n ) = 1$ for all $n$, so we define
$g_n$ as a sequence of $1+2n$ ones followed by all zeros.  Summarizing, for each $n$ and $m$ we have
\[
g_n(m) = 
\begin{cases}
1,&{\text{if~}}m<1+R(f) + 2n,\\
0,&\text{otherwise}.
\end{cases}
\]
Note that $\rcaw$ proves the existence of the sequence $\langle g_n \rangle$, that $\lim_{n \to \infty} g_n = f$,
and that for all $n$, $R (g_n ) \neq R (f)$.  Thus $R$ is sequentially discontinuous and $\prince$ follows by Proposition~3.7 of Kohlenbach~\cite{koh05} (see Proposition~\ref{K3.7}).

The proof of Kohlenbach's proposition is based on the proof of Lemma~1 of Grilliot~\cite{grilliot}. We append that argument here to give a direct derivation of $\prince$ from $(\llpomin)$.  Let the function $f$ and the sequence $\langle g_n \rangle$ be defined as above.  $\rcaw$ suffices to prove the existence of the operator $J \colon \setN^\setN \to 2^\setN$ defined for $h \colon \setN \to \setN$ and $j\in \setN$ by
\[
J(h)(j)= 
\begin{cases}
1, &\text{if ~}(\forall x \le j) [h(x) \neq 0 ],\\
g_i, &\text{if ~} i \le j \land i = (\mu t)[h(t) =0].
\end{cases}
\]
Note that $J(h) = f$ if $(\forall x)[h(x) \neq 0]$.  On the other hand, if $i$ is the least value for which $h(i)=0$,
then  $J(h) = g_i$.  Consequently, for all $h \colon \setN \to \setN$, $R (J(h)) = R (f)$ if and only if
$(\forall x)[h(x) \neq 0]$.  Thus
$R_\lpo (h) = 1 - | R (J(h))-R (f)|$, so the existence of $R_\lpo$ follows by $\rcaw$.
\end{proof}

\begin{theorem}[$\rcaw$]\label{E2LLPO}  The following are equivalent:
\begin{enumerate}
\item  $(\exists ^2 )$.
\item  $(\llpo )$.
\end{enumerate}
\end{theorem}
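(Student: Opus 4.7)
The plan is to note that one direction, $(\llpo) \to (\exists^2)$, is already established by Proposition~\ref{llpoimpliesE2}, so the only work is to prove $(\exists^2) \to (\llpo)$ over $\rcaw$. By Proposition~\ref{p:llpoequiv}, it suffices to produce a realizer for $\llpomin$ from $(\exists^2)$.

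Working in $\rcaw + (\exists^2)$, I would first apply Proposition~\ref{K3.9} to obtain Feferman's $\mu$ functional, which returns the least zero of its argument whenever a zero exists. The desired realizer is then defined by
\[
R_{\llpomin}(f) = \mu(f) \bmod 2,
\]
obtained as a primitive recursive composition of $\mu$ with the parity function, which $\rcaw$ can form via $\lambda$-abstraction and primitive recursion. To verify correctness, suppose $f \in \setN^\setN$ and $n$ is such that $f(n) = 0$. Then by the defining property of $\mu$, the value $\mu(f)$ is the least $t$ with $f(t) = 0$, so $\mu(f) = (\mu\, t \le n)(f(t) = 0)$ and therefore
\[
R_{\llpomin}(f) \equiv_{\text{mod~}2} (\mu\, t \le n)(f(t) = 0),
\]
which is exactly the defining condition for $\llpomin$. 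If $f$ is identically nonzero, the hypothesis of the defining formula fails vacuously, so any value of $R_{\llpomin}(f)$ is acceptable.

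Since $\rcaw$ proves the existence of this functional from $\mu$, the principle $(\llpomin)$ holds, and Proposition~\ref{p:llpoequiv} then yields $(\llpo)$. Combining with the converse from Proposition~\ref{llpoimpliesE2} gives the claimed equivalence. There is no real obstacle here: the only subtlety is that one must cite Proposition~\ref{K3.9} to pass from the raw $(\exists^2)$ to $\mu$, rather than $\mu_0$, because $\llpomin$ specifically demands the parity of the \emph{least} zero.
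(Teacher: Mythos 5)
Your proposal is correct and follows essentially the same route as the paper: the reversal is delegated to Proposition~\ref{llpoimpliesE2}, and the forward direction obtains Feferman's $\mu$ via Proposition~\ref{K3.9} and composes it with the parity function. If anything, your explicit passage through $\llpomin$ and Proposition~\ref{p:llpoequiv} is slightly more careful than the paper's direct assertion that ${\sf rm}(\mu(f),2)$ satisfies the definition of $(\llpo)$, since that functional literally realizes $\llpomin$ and one should invoke the equivalence to conclude $(\llpo)$.
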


\begin{proof}
To show that (1) implies (2), as noted in Proposition \ref{K3.9}, Proposition~3.9 of Kohlenbach~\cite{koh05} shows that the principle $( \exists ^2 )$ proves the
existence of Feferman's $\mu$ functional which satisfies:
\[
(\forall f)[ (\exists x) (f(x) = 0) \to [f(\mu (f))=0 \land (\forall t< \mu(f)) (f(t) \neq 0 )]]
\]
The remainder function ${\sf rm}(n,2)$ yielding the remainder of dividing $n$ by $2$ is primitive recursive.  Thus $\rcaw$ proves the existence of the composition functional
${\sf rm}(\mu (f),2)$, which satisfies the definition of $( \llpo )$.

The converse was proved as Proposition~\ref{llpoimpliesE2} above.
\end{proof}

For an alternative proof of the forward implication of Theorem~\ref{E2LLPO}, we can use
a formalized Weihrauch reducibility result.  The next two results illustrate this process.
The following proposition converts formal Weihrauch reducibility to proofs of implications
of Skolemized functional existence principles.

\begin{proposition}\label{FWRimp}
If $\sf P$ and $\sf Q$ are problems and $(\sf P)$ and $(\sf Q)$ are the associated Skolemized functional existence principles,
then $\rcaw$ proves \[{\sf} P \wlt {\sf Q} \to (({\sf Q})\to ({\sf P})).\]
\end{proposition}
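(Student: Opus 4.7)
The plan is to work in $\rcaw$, assume both hypotheses $\mathsf{P} \wlt \mathsf{Q}$ and $(\mathsf{Q})$, and manufacture a realizer for $\mathsf{P}$ by composing the Weihrauch-witnessing functionals with the realizer handed to us by $(\mathsf{Q})$.

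First, unpack $\mathsf{P} \wlt \mathsf{Q}$ to extract the preprocessor $\phi$ and the postprocessor $\psi$ provided by the definition of (weak) Weihrauch reducibility, where $\psi$ is allowed access to the original input alongside the output of the realizer. Next, invoke $(\mathsf{Q})$ to obtain an explicit realizer $R_{\mathsf{Q}}$ of~$\mathsf{Q}$. Using $\lambda$-abstraction, which is a term-forming operation of $\rcaw$, form the functional
\[
R_{\mathsf{P}} := \lambda x.\,\psi(x,\, R_{\mathsf{Q}}(\phi(x))).
\]
The defining property of the Weihrauch reduction states precisely that for any realizer $R_{\mathsf{Q}}$ of $\mathsf{Q}$, a composition of this form is a realizer for $\mathsf{P}$. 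Hence $R_{\mathsf{P}}$ witnesses $(\mathsf{P})$, completing the implication.

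The argument is essentially a definitional unpacking rather than a genuine construction. The only point worth checking is that the composite term can be formed inside $\rcaw$, and this is immediate because $\lambda$-abstraction and composition of given functionals are available as primitive syntactic operations. Thus there is no real obstacle: the proposition simply formalizes the idiom that a Weihrauch reduction converts any solution to the harder problem into a solution to the easier one, provided the ambient theory can express functional composition. The analogous statement for the strong Weihrauch case is recovered by the same argument with $\psi$ applied only to $R_{\mathsf{Q}}(\phi(x))$.
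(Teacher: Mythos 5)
Your proof is correct and matches the paper's argument essentially verbatim: the paper likewise extracts the witnessing functionals $\varphi$ and $\psi$, takes the realizer $R_{\mathsf{Q}}$ from $(\mathsf{Q})$, and forms the composition $\psi(R_{\mathsf{Q}}(\varphi(x)),x)$ in $\rcaw$, which realizes $(\mathsf{P})$ by the defining property of the reduction. No differences worth noting.
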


\begin{proof}
Working in $\rcaw$, suppose ${\sf} P \wlt {\sf Q}$ and $({\sf Q})$ hold.  Let $\varphi$ and $\psi$ be the functionals
witnessing ${\sf} P \wlt {\sf Q}$ and let $R_{\sf Q}$ witness $({\sf Q})$.  $\rcaw$ proves the existence of the composition
$\psi (R_{\sf Q} (\varphi (x)),x)$, which can be directly shown to realize the principle $({\sf P})$.
%
% by the following reasoning.  Given an instance $x$
%of the problem $\sf P$, $\varphi (x)$ produces an instance of the problem $\sf Q$.  Using the solution
%provided by $R_{\sf Q} (\varphi (x))$ and a copy of the original problem $x$, $\psi$ finds a solution to $x$.
\end{proof}

Next, we prove the formalized Weihrauch reducibility result corresponding to the forward implication
of Theorem~\ref{E2LLPO}.

\begin{proposition}\label{FWRllpo}
  $\rcaw$ proves $\llpo \leq_{\text{sW}} \lpo$.
\end{proposition}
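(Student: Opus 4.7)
The plan is to invoke Proposition~\ref{p:llpoequiv} in order to reduce the task to the easier one of establishing $\llpomin \leq_{\text{sW}} \lpo$ in $\rcaw$. That proposition provides a strong Weihrauch reduction $\llpo \leq_{\text{sW}} \llpomin$ using pre- and postprocessing functionals already formable in $\rcaw$, and strong Weihrauch reductions compose in $\rcaw$ by concatenating the corresponding functionals; hence a reduction from $\llpomin$ to $\lpo$ yields the desired $\llpo \leq_{\text{sW}} \lpo$ directly.

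For $\llpomin \leq_{\text{sW}} \lpo$, the idea is to package the parity of the least zero of the input into the single bit ``does this sequence contain any zero at all,'' which is exactly what $\lpo$ decides. I would define the preprocessing functional $\phi$ by
\[
\phi(f)(n) = \begin{cases} 0, & \text{if } (\exists m \le n)[f(m)=0] \text{ and } (\mu\,m \le n)[f(m)=0] \text{ is odd},\\ 1, & \text{otherwise}. \end{cases}
\]
This definition uses only bounded quantifiers in $n$, so $\phi$ is primitive recursive in $f$ and available as a functional in $\rcaw$. When $f$ has a least zero $m_0$, the value $\phi(f)(n)$ is $1$ for $n < m_0$ and is determined by the parity of $m_0$ for $n \ge m_0$; thus $\phi(f)$ has a zero if and only if $m_0$ is odd, i.e., if and only if $R_{\llpomin}(f) = 1$.

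For the postprocessing, take $\psi$ to be the bit flip $k \mapsto 1-k$, again a primitive recursive functional. Because the defining axiom for $\lpo$ says $R_\lpo(g)=0$ iff $g$ has a zero, the composition $\psi \circ R_\lpo \circ \phi$ returns $1$ exactly when the least zero of $f$ is odd, so it realizes $\llpomin$ for any realizer $R_\lpo$ of $\lpo$. The whole argument then formalizes in $\rcaw$ because every ingredient is primitive recursive.

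There is no substantive obstacle here; the content of the proof is concentrated in the one-line construction of $\phi$. The only bookkeeping issue worth noting is that $\phi$ must be total on $\setN^\setN$ (including on inputs $f$ that never hit $0$, on which $\phi(f)$ is identically $1$), and that the composition argument stitching together the two strong reductions stays within $\rcaw$ --- both of which are immediate from primitive recursiveness.
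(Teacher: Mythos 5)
Your proposal is correct and takes essentially the same route as the paper's proof: both pass to $\llpomin$ via Proposition~\ref{p:llpoequiv} and build a primitive recursive preprocessing functional that converts the parity of the first zero of the input into the question of whether the transformed sequence contains a zero, then apply $R_\lpo$ and postprocess. The only difference is that you mark the \emph{odd} case with a zero and compensate with the bit-flip $k \mapsto 1-k$, whereas the paper marks the \emph{even} case and uses the identity postprocessing; this is immaterial.
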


\begin{proof}
  We again work with $\llpomin$ in place of~$\llpo$.
Let $\varphi \colon \setN^\setN \to 2^\setN$ be the preprocessing functional defined by:
\[
\varphi (h)(n) =
\begin{cases}
1,& \text{if~}(\forall t \le n) (h(t)>0),\\
1,& \text{if~}(\exists t \le n) (h(t) = 0)\text{~and the least such~}t\text{~is odd,}\\
0,& \text{if~}(\exists t \le n) (h(t) = 0)\text{~and the least such~}t\text{~is even.}
\end{cases}
\]
Note that $\varphi (h)$ is the sequence of all ones except when the first zero in the
range of $h$ occurs in an even location.  Define the postprocessing functional $\psi (h,n) = n$. If $R_\lpo$ is a realizer for $\lpo$, then $\psi (h, R_\lpo (\varphi(h))$ is a realizer for $\llpo$.
%Because $\psi$ makes no use of $h$, this shows that $\llpomin$ is
%formally strongly Weihrauch reducible to $\lpo$. Applying Proposition~\ref{p:llpoequiv}, we %see that $\llpo$ is formally strongly Weihrauch reducible to~$\lpo$.
Because $\psi$ makes no use of $h$, this shows that $\llpomin \leq_{\text{sW}} \lpo$. Applying Proposition~\ref{p:llpoequiv}, we see that $\llpo  \leq_{\text{sW}} \lpo$.
\end{proof}

As mentioned before, the forward implication of Theorem~\ref{E2LLPO} follows immediately from Proposition~\ref{FWRllpo} and Proposition~\ref{FWRimp}.  The next result uses Theorem~\ref{E2LLPO} to give a short proof of one direction of Proposition~3.4 of Kohlenbach~\cite{kohWKL}, showing that a uniform version of weak K\"onig's lemma is equivalent to $(\exists ^2 )$.  This equivalence is also included in Proposition~3.9 of Kohlenbach~\cite{koh05}.   This equivalence will be helpful in the analysis of Banach's Theorem for $\setN$ in the next section.
(For a discussion of uniform $\sf{WWKL}$ see Theorem 3.2 of Sakamoto and Yamazaki~\cite{sy}.)

\begin{proposition}[$\rcaw$]\label{E2WKL}
 The following are equivalent:
\begin{enumerate}
\item  $(\exists ^2 )$.
\item $( \WKL )$  There is a functional $\WKL\colon \setN^\setN \to 2^\setN$ such that
 if $T$ is a code for an infinite tree in $2^\setN$, then $\WKL (T)$ is an infinite path
 in $T$.
\end{enumerate}
\end{proposition}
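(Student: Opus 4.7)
The plan is to prove each direction of the equivalence separately in $\rcaw$. For the forward direction, we use $\exists^2$ to decide infiniteness of subtrees and thereby compute the leftmost path. For the reverse direction, we invoke Theorem~\ref{E2LLPO} to reduce to deriving $(\llpo)$ from $(\WKL)$, which we accomplish by encoding an $\llpo$ instance as a tree whose only candidate infinite paths are the two constant sequences.

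For $(1) \Rightarrow (2)$, assume $(\exists^2)$. For each code $T$ and each finite binary string $\sigma$, the predicate ``the subtree of $T$ above $\sigma$ is finite'' can be written as
\[
(\exists n)\bigl[(\forall \tau \in 2^n)(\sigma\cat\tau \notin T)\bigr],
\]
which is a $\Sigma^0_1$ statement whose matrix is primitive recursive in $T$. By $(\exists^2)$ a decision functional $\mathrm{Inf}(T, \sigma)$ exists; equivalently, by Proposition~\ref{K3.9}, we may work with Feferman's $\mu$. Define the leftmost path $h$ by primitive recursion in $\mathrm{Inf}$: set $h(n) = 0$ if the subtree above $\langle h(0), \ldots, h(n-1), 0\rangle$ is infinite, and $h(n)=1$ otherwise. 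A standard König-style induction shows that the subtree above each initial segment of $h$ remains infinite, so every such initial segment lies in $T$, and $h$ is an infinite path. Setting $\WKL(T) = h$ yields the required functional.

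For $(2) \Rightarrow (1)$, by Theorem~\ref{E2LLPO} it suffices to derive $(\llpo)$ from $(\WKL)$. Given $f\colon \setN \to \setN$, define
\[
T_f = \{\, b^k : b \in \{0,1\},\ k \ge 0,\ (\forall i < k)[f(2i + b) = 0]\,\} \subseteq 2^{<\setN}.
\]
The characteristic function of $T_f$ is primitive recursive in $f$ and thus exists in $\rcaw$, and the set is closed under prefixes because the defining condition is monotone in $k$. The only possible infinite paths through $T_f$ are the constants $0^\omega$ and $1^\omega$, which are present exactly when $f(2i)=0$ for all $i$ and when $f(2i+1)=0$ for all $i$, respectively. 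Under the hypothesis of $\llpo$ at least one is present, so $T_f$ is infinite and $\WKL(T_f)$ returns one of these two constants. Setting $R_\llpo(f) = \WKL(T_f)(0)$ then provides a realizer for $\llpo$.

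The main obstacle is ensuring that $T_f$ is infinite exactly when the $\llpo$ hypothesis holds---in particular, ruling out the pathology of a tree that is infinite without having either constant infinite path. Here this follows directly from the shape of the definition: since $T_f$ contains only constant strings, if it has arbitrarily long elements then it must have arbitrarily long $b^k$ for some fixed $b \in \{0,1\}$, which forces $f(2i+b) = 0$ for all $i$ and hence the presence of the infinite path $b^\omega$ in $T_f$. This keeps the entire argument inside $\rcaw + (\WKL)$, without invoking any further compactness or choice beyond what $\WKL$ supplies itself.
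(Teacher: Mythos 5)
Your proof is correct and follows essentially the same route as the paper: the forward direction computes the leftmost path using $\exists^2$ to decide infiniteness of subtrees, and the reversal encodes an omniscience instance as a binary tree and invokes Theorem~\ref{E2LLPO}. The only difference is cosmetic: the paper works with $\llpomin$ and a tree that is infinite for every input $f$, whereas your tree of constant strings may be finite when the $\llpo$ hypothesis fails, which is harmless since the defining formula of $(\llpo)$ is conditional on that hypothesis and $\WKL$ is a total functional into $2^\setN$.
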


\begin{proof}
As noted in the proof of Proposition~3.4 of Kohlenbach~\cite{kohWKL}, the proof that (1) implies (2) follows from the fact that, given the
functional $\exists ^2$, primitive recursion can define a functional which selects an infinite branch of an infinite binary tree.
For a short proof of the converse, it suffices to show that $(\WKL )$ implies $(\llpomin)$.  Consider an instance
$f\colon \setN \to 2$ for $\llpomin$.  Let $\langle 1 \rangle_n$ denote the sequence of $n$ ones.  Define the
$0\mhyphen 1$ tree $T_f$ by:
\begin{itemize}
\item Only sequences of the form $0 \cat \langle 1 \rangle _n$ and $1\cat \langle 1 \rangle _n$ are in $T_f$,
\item $0 \cat \langle 1 \rangle _n \in T_f$ if and only if either $(\forall t\le n)[f(t) \neq 0]$ or $(\mu t \le n)[f(t) =0]$ is even, and
\item $1 \cat \langle 1 \rangle _n \in T_f$ if and only if either $(\forall t\le n)[ f(t) \neq 0]$ or $(\mu t \le n)[f(t) =0]$ is odd.
\end{itemize}
$\rcaw$ can prove the existence of a functional $\varphi$ that maps each $f$ to $T_f$.  For each $f$, the first element
of $\WKL (\varphi (f) )$ is an $\llpomin$ solution for~$f$.
\end{proof}

\section{Banach's Theorem on $\setN$}\label{sec:banachn}

This section reformulates Theorems~\ref{t:hirst1} and \ref{t:hirst2} as higher order functional existence statements. In particular, Theorem~\ref{BanachN} shows that, in the Skolemized higher order setting, the bounded version is equivalent to the unbounded version.  This collapse mimics that of the uniform principle $( \WKL )$. Our discussion begins with the formulation of the bounded principle and its proof from~$( \WKL )$.

\begin{definition}
 A bounded Banach functional $\sf{bB}_\setN$  on $\setN$ is defined as follows.  For
 injective functions $f_0\colon \setN \to \setN$ and $f_1 \colon \setN \to \setN$ with bounding functions  $b_0$ and $b_1$, ${\sf{bB}}_\setN (f_0 , f_1 , b_0, b_1 )$ is a bijective function $h\colon \setN \to \setN$  such that for all $m,n\in \setN$, $h(m)=n$ implies $f_0 (m)=n$ or $f_1(n) = m$.  As usual, the parenthesized  expression $({\sf{bB}}_\setN )$ denotes the principle asserting the existence of a bounded Banach functional for $\setN$.
 \end{definition}
 
\begin{proposition}[$\rcaw$]\label{WKLbBN}
 $( \WKL)$ implies $({\sf{bB}}_\setN )$.
\end{proposition}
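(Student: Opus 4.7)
The plan is to exploit Proposition~\ref{E2WKL}, which gives the equivalence of $(\WKL)$ and $(\exists^2)$ over $\rcaw$, so that from $(\WKL)$ we also obtain Feferman's $\mu$ functional via Proposition~\ref{K3.9}. With those tools in hand, I will construct the bounded Banach functional explicitly, by formalizing the K\"onig chain-tracing proof of Banach's theorem, uniformly in the inputs $(f_0, f_1, b_0, b_1)$.

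In more detail, I first use the bounding functions to define primitive recursive preimage operators
\[
\pi_i(n) = (\mu\, t \le b_i(n))(f_i(t) = n),
\]
together with flags indicating whether such a $t$ exists. By primitive recursion in $m$ and $k$, I then define the backward chain $c_m(0) = m$, $c_m(2k+1) = \pi_1(c_m(2k))$ when defined, $c_m(2k+2) = \pi_0(c_m(2k+1))$ when defined, along with an undefinedness flag $d_m(k)$. The assertion ``the chain of $m$ terminates at an odd step'' is $\Sigma^0_1$ with primitive recursive matrix, hence decidable from $\exists^2$. I set $\alpha(m) = 0$ when this holds (classifying $m$ into $A_2$, since its chain reaches a $B$-side element not in the range of $f_0$) and $\alpha(m) = 1$ otherwise (putting $m$ into $A_1$, which covers both the chains that terminate on the $A$-side and any infinite chains). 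Define
\[
h(m) = \begin{cases} f_0(m), & \alpha(m) = 1, \\ \pi_1(m) = c_m(1), & \alpha(m) = 0, \end{cases}
\]
where the second clause is well defined because $\alpha(m) = 0$ forces $m$ to lie in the range of $f_1$.

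The Banach conclusion that $h(m) = n$ implies $f_0(m) = n$ or $f_1(n) = m$ is then immediate from the definition. For bijectivity, the key observation is that two chains that coincide from some index onward must terminate the same way, so they share the same $\alpha$-value. Injectivity across the partition follows: if $f_0(m_1) = \pi_1(m_2)$ with $\alpha(m_1) = 1$ and $\alpha(m_2) = 0$, then the chain of $m_2$ reaches $m_1$ at step two and continues as the chain of $m_1$, forcing $\alpha(m_1) = \alpha(m_2)$, a contradiction. Surjectivity is verified by the analogous chain analysis starting from a target $n$, producing either $f_0^{-1}(n) \in A_1$ or $f_1(n) \in A_2$ as a preimage under $h$. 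Every step is uniform in $(f_0, f_1, b_0, b_1)$, so the construction yields the desired functional ${\sf bB}_\setN$. The main obstacle is not any single step but the chain-shifting bookkeeping underpinning the bijectivity argument; the use of $(\WKL)$ itself is not a deep step, entering only through its equivalence with $(\exists^2)$.
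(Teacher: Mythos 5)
Your proof is correct, but it takes a genuinely different route from the paper's. The paper keeps the promise implicit in the statement and uses $(\WKL)$ \emph{directly}: it defines, primitive recursively and uniformly in $\vec f = \langle f_0,f_1,b_0,b_1\rangle$, a $0$--$1$ tree $T_{\vec f}$ whose nodes $\sigma$ record for each $m<\lh(\sigma)$ whether $h(m)$ will be $f_0(m)$ or the bounded preimage under $f_1$, with four local clauses enforcing the stopper conditions, injectivity, and surjectivity; it cites the second-order proof of Theorem~\ref{t:hirst2} to see $T_{\vec f}$ is infinite, applies the $\WKL$ functional once, and reads the bijection off the resulting path. You instead pass through Proposition~\ref{E2WKL} to get $(\exists^2)$ and Feferman's $\mu$, and formalize K\H{o}nig's chain-tracing argument outright, using $\exists^2$ to decide the $\Sigma^0_1$ predicate ``the backward chain of $m$ terminates at an odd step.'' There is no circularity in doing so --- Proposition~\ref{E2WKL} is established before this section, and Proposition~\ref{A10} already composes the two equivalences in exactly this way --- and your chain-shifting verification of injectivity and surjectivity is the standard one and goes through with the induction available in $\rcaw$. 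The trade-off: the paper's argument isolates the oracle use to a single application of $(\WKL)$ and inherits the correctness verification wholesale from the second-order $\WKLo$ proof, a point the authors flag explicitly when they note that the $(\exists^2)$ detour ``could be'' used but that their argument ``uses a single application of $(\WKL)$''; your argument is more self-contained, makes the Banach decomposition $A = A_1 \cup A_2$ explicit, and is harmless in strength since Theorem~\ref{BanachN} shows everything here collapses to $(\exists^2)$ anyway. One point you handle correctly but should keep visible in a final write-up: the functional must be total on \emph{all} inputs, including ill-formed ones where $f_0,f_1$ are not injective or $b_0,b_1$ are wrong; your bounded searches $\pi_i(n)=(\mu\,t\le b_i(n))(f_i(t)=n)$ and the $\exists^2$-decided classification guarantee this, just as the paper's bounded construction of $T_{\vec f}$ does.
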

 
\begin{proof}
 We work in $\rcaw$.  For bounded injections $\vec f = \langle f_0 , f_1 , b_0 , b_1 \rangle$, we will describe the computation of a related tree $T_ {\vec f}$ so that any infinite path through $T_ {\vec f}$ defines an injection
 $h$ satisfying Banach's theorem.  If $p$ is an infinite path through $T_ {\vec f}$, the bijection $h$ will be  defined by:
 \[
 h(n)=
 \begin{cases}
f_0 (n), & {\text {if~}}p(n) = 0,\\
(\mu\, t \le b_1(n))(f_1 (t) =n), & {\text {if~}} p(n)=1.
 \end{cases}
 \]
 A finite sequence $\sigma \in 2^{<\setN}$ is included in $T_{\vec f}$ if it satisfies the following four conditions, (i)--(iv), each ensuring an aspect of the back-and-forth construction of the bijection $h$.

 \newenvironment{reqlist}%
                {\begin{list}{1em}{\leftmargin=3.1em\labelwidth=1.5em}}%
                {\end{list}}
 
 First, if there is an $m< \lh(\sigma)$ which is not in the range of $f_1$, we ensure
 that $h(m) = f_0 (m)$.
 \begin{reqlist}{}{}
 \item [(i)]  If $m< \lh (\sigma)$ and $(\forall t \le b_1 (m))[f_1(t) \neq m]$ then $\sigma(m)=0$.
 \end{reqlist}
 Next, if $m$ is $f_1(t)$ for some $t$ and $t$ is not in the range of $f_0$, we set $h(m)=t$ in the following fashion.
 % \begin{list}{1em}{\itemindent=-1.5em,\leftmargin=3em}
 \begin{reqlist}
 \item [(ii)]  If $m< \lh (\sigma)$, there is a $t \le b_1(m)$ such that $f_1(t)=m$, and $(\forall s\le b_0(t))[f_0(s) \neq t]$,
 then $\sigma(m)=1$.
 \end{reqlist}
 The next clause ensures that $h$ is injective.
 \begin{reqlist}{}{}
 \item [(iii)]  If $m,n < \lh (\sigma)$, $\sigma(m) =0$, and $\sigma (n) = 1$, then $f_1(f_0 (m)) \neq n$.
 \end{reqlist}
 This final clause ensures that $h$ is surjective.
 \begin{reqlist}{}{}
 \item [(iv)] If $m,n < \lh (\sigma)$, $\sigma (m) = 0 $, and $\sigma(n) = 1$, then $f_1(f_0 (n)) \neq m$.
 \end{reqlist}
 The sequences satisfying the clauses are closed under initial segments, so $T_{\vec f}$ is a tree.
 The second order proof of the bounded Banach theorem in $\WKLo$ (Theorem~\ref{t:hirst2}) shows that $T_{\vec f}$ is infinite.  The construction of $T_{\vec f}$ terminates for arbitrary choices
 of $\vec f$, even if $f_0$ and $f_1$ are not injections or if $b_0$ or $b_1$ gives incorrect bounding
 information.  Thus $\rcaw$ proves the existence of a functional mapping $\vec f$ to $T_{\vec f}$.
 Whenever $T_{\vec f}$ is infinite, $\WKL (\vec f )$ yields an infinite path.  Concatenating these
 functionals with the one computing the bijection $h$ as described at the beginning of the proof
 yields the desired Banach functional.
 \end{proof}

The preceding proposition differs from the second order analog (Theorem~\ref{t:hirst1}) in the formulation of the bounding functions.  The original second order version was formulated with characteristic functions for the ranges of the injections.  However, in the calculation of $T_{\vec f}$, the use of an incorrect characteristic function could result in an unbounded nonterminating search, causing the functional mapping $\vec f$ to $T_{\vec f}$ to be undefined on some inputs.  This difficulty could be circumvented by using the fact that the uniform principle $(\WKL )$ implies $(\exists ^2)$, but the argument presented here uses a single application of $(\WKL )$.

Our proof of the unbounded version of Banach's Theorem for $\setN$ from $\prince$ uses a proposition relating $\prince$ to the existence of bounding functions.
 
\begin{definition}\label{BN1}
 The functional ${\sf b}\colon\setN^\setN \to \setN^\setN$ maps any function $f\colon\setN \to \setN$ to a bounding function ${\sf b}(f)$ for $f$.  In the notation of section \S5, for all $f$ we have $\beta (f , {\sf b}(f))$. As usual, the parenthesized expression
 $({\sf b})$ denotes the principle asserting the existence of a bounding functional.
\end{definition}
 
\begin{lemma}[$\rcaw$]\label{BN2}
 $\prince$ implies $({\sf b})$.
\end{lemma}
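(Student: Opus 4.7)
The plan is to construct $\sf b$ directly using Feferman's $\mu$ functional, which is available under $\prince$ by Proposition \ref{K3.9}. Given $f \colon \setN \to \setN$ and $n \in \setN$, I would form the auxiliary function $h_{f,n}\colon \setN \to \setN$ defined by $h_{f,n}(t) = 0$ when $f(t) = n$ and $h_{f,n}(t) = 1$ otherwise. This is primitive recursive in the inputs $f$ and $n$, so $\rcaw$ suffices to form a functional $H$ with $H(f,n) = h_{f,n}$ via $\lambda$-abstraction.

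Next I would define the candidate bounding functional by
\[
{\sf b}(f) = \lambda n.\, \mu(H(f,n)),
\]
which exists in $\rcaw + \prince$ because $\mu$ and $H$ do.

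To verify $\beta(f, {\sf b}(f))$, fix $n$ and argue both directions of the biconditional $(\exists m)(f(m) = n) \leftrightarrow (\exists t \le {\sf b}(f)(n))(f(t) = n)$. The reverse direction is immediate. For the forward direction, if some $m$ satisfies $f(m) = n$, then $h_{f,n}(m) = 0$, so by the defining property of $\mu$ we have $h_{f,n}(\mu(H(f,n))) = 0$, that is, $f({\sf b}(f)(n)) = n$; thus $t = {\sf b}(f)(n)$ itself witnesses the bounded existential.

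No real obstacle is anticipated here: because $\rcaw$ contains $\lambda$-abstraction and primitive recursion, the only substantive step is invoking Proposition \ref{K3.9} to obtain $\mu$ from $\prince$, and the verification reduces to unwinding the definition of Feferman's $\mu$. The proof will be essentially the dual of the argument given in Proposition 4.4 of the previous section, where $\mu$ was used to translate a characteristic function of the range into a bounding function; here we observe that even the characteristic function is unnecessary, since $\mu$ applied to $H(f,n)$ already supplies a bound directly from $f$.
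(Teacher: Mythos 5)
Your proposal is correct and matches the paper's proof: the paper likewise forms the functional $\mathsf{z}(f,n)$ with $\mathsf{z}(f,n)(m)=0$ iff $f(m)=n$ (your $H(f,n)$), obtains $\mu_0$ from $\prince$ via Proposition~\ref{K3.9}, and sets ${\sf b}(f)=\lambda n.\,\mu_0(\mathsf{z}(f,n))$. Using Feferman's $\mu$ instead of $\mu_0$ is an immaterial difference, since either supplies a zero when one exists, which is all the verification requires.
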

 
\begin{proof}
 $\rcaw$ proves the existence of a functional ${\sf z}\colon \setN^\setN \times \setN \to 2^\setN$ where ${\sf z} (f,n) = g$ satisfies $g(m) = 0$ if and only if $f(m)=n$.  By Proposition~\ref{K3.9}, $\prince$ proves the existence of Kohlenbach's $\mu_0$.  By composition and $\lambda$ abstraction, there is a functional ${\sf b}(f)$ mapping $f$ to the bounding function $\mu_0 ({\sf z}(f,n))$.
\end{proof}
 
The next definition formulates an unbounded form of Banach's theorem on $\setN$.  Using the principle
$( \sf b )$, the unbounded form can be derived from the bounded form.

\begin{definition}
A Banach functional on $\setN$, denoted $\sf{B}_\setN$, is defined as follows.  For
injective functions $f_0\colon \setN \to \setN$ and $f_1 \colon \setN \to \setN$,
${\sf{B}}_\setN (f_0 , f_1 )$ is a bijective function $h\colon \setN \to \setN$
such that for all $m,n\in \setN$, $h(m)=n$ implies $f_0 (m)=n$ or $f_1(n) = m$.  As usual, the parenthesized expression $({\sf{B}}_\setN )$ denotes the principle asserting the existence of a Banach functional for $\setN$.
\end{definition}
 
\begin{proposition}[$\rcaw$]\label{A10}
 $(\exists^2)$ implies $({\sf{B}}_\setN )$.
\end{proposition}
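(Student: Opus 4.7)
The plan is to derive $({\sf B}_\setN)$ by chaining together three already-established principles: assume $(\exists^2)$; use Proposition \ref{E2WKL} to obtain the uniform weak König functional $\WKL$; use Lemma \ref{BN2} to obtain the bounding functional $\sf{b}$; then use Proposition \ref{WKLbBN} to obtain the bounded Banach functional $\sf{bB}_\setN$. The desired $\sf{B}_\setN$ is then produced by a straightforward composition.

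More concretely, working in $\rcaw$, I would fix two injections $f_0, f_1 \colon \setN \to \setN$ presented as type-$1$ inputs. Applying $\sf{b}$ to each produces bounding functions $b_0 = {\sf b}(f_0)$ and $b_1 = {\sf b}(f_1)$, and then I feed the quadruple $\langle f_0, f_1, b_0, b_1 \rangle$ into the bounded Banach functional $\sf{bB}_\setN$ to obtain a bijection $h \colon \setN \to \setN$ satisfying the required compatibility with $f_0$ and $f_1$. The composition
\[
{\sf B}_\setN(f_0, f_1) := {\sf bB}_\setN\bigl(f_0, f_1, {\sf b}(f_0), {\sf b}(f_1)\bigr)
\]
can be formed in $\rcaw$ from the functionals $\sf{bB}_\setN$ and $\sf{b}$ by $\lambda$-abstraction, and it clearly satisfies the definition of ${\sf B}_\setN$.

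There is no essential obstacle here, since every ingredient has already been proved. The only point worth noting explicitly is that the bounding functions produced by $\sf{b}$ are genuinely correct bounding functions for the injections $f_0, f_1$ (by the defining property of $\sf{b}$ from Definition \ref{BN1}), so the hypotheses of $\sf{bB}_\setN$ are met, and consequently the output is a valid bijection satisfying the Banach property. I would close with one sentence remarking that this argument mirrors the strategy used in the bounded case: $(\exists^2)$ supplies both the bounding data via $\sf{b}$ and, through its equivalence with $(\WKL)$, the search mechanism needed to build $h$.
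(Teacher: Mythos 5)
Your proposal is correct and follows exactly the same route as the paper's proof: derive $(\WKL)$ from $(\exists^2)$ via Proposition~\ref{E2WKL}, obtain $({\sf bB}_\setN)$ from Proposition~\ref{WKLbBN} and the bounding functional $\sf b$ from Lemma~\ref{BN2}, and form the composition ${\sf bB}_\setN(f_0,f_1,{\sf b}(f_0),{\sf b}(f_1))$. No differences worth noting.
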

 
\begin{proof}
Working in $\rcaw$, assume $(\exists ^2 )$.  By Proposition~\ref{E2WKL}, we have $(\WKL )$, so by Proposition~\ref{WKLbBN}, we have $({\sf{bB}}_\setN )$.  By Lemma \ref{BN2}, we have the functional $\sf b$ mapping functions to associated bounding functions.  The composition functional ${\sf bB}(f_0, f_1, {\sf b}(f_0),{\sf b}(f_1))$ satisfies $({\sf{B}}_\setN )$.
\end{proof}

The next proposition essentially 
shows that the principle $(\exists^2)$ can be deduced from the restricted form of Banach's theorem for $\setN$.
 
\begin{proposition}[$\rcaw$]\label{B10}
  $({\sf{bB}}_\setN )$ implies $( \llpo )$.
\end{proposition}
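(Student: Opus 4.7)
My plan is to derive $(\llpo)$ from $({\sf{bB}}_\setN)$ by constructing a realizer for the equivalent principle $(\llpomin)$, using Proposition~\ref{p:llpoequiv}. Given $f\in\setN^\setN$, the goal is to uniformly output the parity of the least zero of $f$ when one exists.

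I would define a bounded Banach instance $(f_0,f_1,b_0,b_1)$ primitive recursive in $f$, take the bijection $h$ returned by any realizer $\Phi$ of $({\sf{bB}}_\setN)$, and read the parity off $h$. A natural first attempt is to set $f_0(n)=2n$ with $b_0(k)=k$, and to define $f_1(n)\in\{2n,2n+1\}$ by bounded inspection of $f$: let $f_1(n)=2n$ if the least zero of $f$ visible in $[0,2n+1]$ occurs at an odd position, and $f_1(n)=2n+1$ otherwise. Since $f_1(n)\in\{2n,2n+1\}$ at every $n$, the function is automatically injective and $b_1(k)=k$ bounds any preimage search. In the two extreme cases, where the least zero of $f$ sits at position $0$ or at position $1$, the Banach constraint $h(2)\in\{f_0(2)\}\cup\{n:f_1(n)=2\}$ together with the surjectivity of $h$ (which forces the unique preimage of $1$) pins down $h(2)=4$ in the first case and $h(2)=1$ in the second.

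The principal obstacle is uniformity in the location $k$ of the least zero of $f$. For large $k$ the naive observation $h(2)$ reverts to $4$ because $f_1(1)$ was committed before the zero was seen, losing the diagnostic bit. The plan to circumvent this is to extract the parity through a continuous observation of $h$ that scans outward from $0$ looking for the first position where any valid Banach bijection is forced to deviate from the ``no zero yet'' baseline pattern. Analysis of the K\"onig back-and-forth chains with $A_0=\setN\setminus\mathrm{range}(f_1)$ and $A_{n+1}=f_1(f_0(A_n))$ then shows that the parity of $k$ is encoded at that forced deviation, the realizer outputs it, and one appeals to Proposition~\ref{p:llpoequiv} to obtain $(\llpo)$. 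Verifying that for every realizer $\Phi$ the bijection $\Phi(f_0,f_1,b_0,b_1)$ necessarily carries the parity information at the chosen observation point, independent of which valid bijection $\Phi$ selects, is the chief technical step of the proof.
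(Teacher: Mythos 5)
Your overall strategy---build a bounded Banach instance primitive recursively from $f$, apply the realizer, and read off the parity, then invoke Proposition~\ref{p:llpoequiv}---matches the paper's. But the proof as proposed has a genuine gap, and you have in fact located it yourself: with $f_0(n)=2n$ and $f_1(n)\in\{2n,2n+1\}$, the value $f_1(1)$ is committed after inspecting only $f\upharpoonright[0,3]$, so a first zero occurring at a large position $k$ never influences $h(2)$, and the fixed observation point carries no information. The proposed repair---scanning outward from $0$ for the first position where the bijection is forced to deviate from the ``no zero yet'' baseline---is not available in $\rcaw$. That scan is an unbounded $\mu$-search over a type~$1$ object which fails to terminate when $f$ has no zero, and the existence of a total functional performing such a search is essentially $\princmuo$, i.e., $\prince$ itself (Proposition~\ref{K3.9}). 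Since $\prince$ is exactly what Proposition~\ref{B10} is being used to derive (via Theorem~\ref{E2LLPO}), the repair is circular. A realizer for $\llpomin$ must be a total functional obtained by composing the Banach functional with operations already available in $\rcaw$, which in practice means the answer has to be read off at a \emph{fixed} coordinate of $h$.

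What is missing is the mechanism for propagating the parity of an arbitrarily distant first zero back to that fixed coordinate. The paper achieves this by arranging $f_0$ and $f_1$ as two interlocking chains of constant step size along the ordering $\dots,8,6,4,2,0,1,3,5,7,\dots$ (roughly, $f_0$ and $f_1$ each shift by $0$ or $\pm 2$), so that the instance is a single two-way infinite chain when $0$ is not in the range of $g$, and the first zero of $g$ at position $s$ introduces a break whose side depends on the parity of $s$. Any Banach bijection must then orient the entire finite segment of the chain between the break and the origin consistently (all ``solid'' or all ``dashed'' arrows), which forces $h(1)=0$ or $h(1)=1$ according to that parity, no matter how large $s$ is; when $g$ has no zero either orientation is legal and either output is acceptable for $\llpomin$. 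Your construction, whose chains grow exponentially ($n\mapsto 2n$), cannot support this back-propagation, so you would need to redesign the instance along these lines rather than change the observation procedure.
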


\begin{proof}
Assume $\rcaw$ and $({\sf{bB}}_\setN )$. Our goal is to prove the existence of the functional $\llpomin$.  Let $g\colon\setN \to \setN$ and define bounded injections $f_0$ and $f_1$ as  follows.
  
 Figure~\ref{B10fig} illustrates the construction of $f_0$ and $f_1$ for three choices of $g$. In general, for even inputs like $n=2m$, let $f_0 (n) = n+2$ and $f_1 (n) = n$.  For $n=1$, let $f_0 (1) = 0$ and $f_1(1)=1$.  The appearance of $0$ in the range of $g$ affects the definitions of $f_0$ and $f_1$ on other odd values. 
 Suppose that $n=2m+3$.
 If $(\forall t \le n-2 )[g(t) \neq 0]$, then let $f_0(n)=n-2$ and $f_1(n)=n$.
 If $(\exists t \le n-2) [g(t) = 0]$, write $s = (\mu t)[g(t)= 0]$.  If $s$ is even, then let
 \[
 f_0(n) =
 \begin{cases}
 n-2, &\text{if}~ s=n-3,\\
 n, &\text{if}~ s<n-3,
 \end{cases}
 \]
 and let $f_1(n)=n+2$.  If $s$ is odd, then let
  \[
 f_0(n) =
 \begin{cases}
 n-2, &\text{if}~ s=n-2,\\
 n, &\text{if}~ s<n-2,
 \end{cases}
 \]
 and
  \[
 f_1(n) =
 \begin{cases}
 n, &\text{if} ~s=n-2,\\
 n+2, &\text{if}~ s<n-2.
 \end{cases}
 \]
 
 In figure, $f_0$ is represented by solid arrows and $f_1$ by dashed arrows.  Extending
 the chains to the left and right, each number has an exiting arrow, so both $f_0$ and $f_1$ are total. No number has two entering arrows, so $f_0$ and $f_1$ are injective.
 
 \begin{figure}[t]
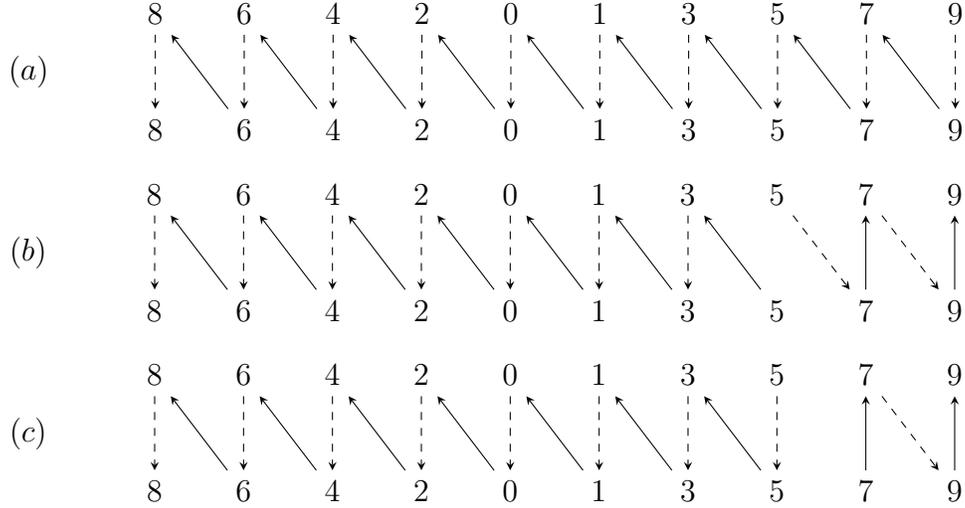

 \begin{center}
   \tikzpicture
   [mymatrix/.style={
    matrix of nodes,
    column sep=.7cm,
    row sep=1cm,
    row 2/.style={minimum height=.1cm}}
  ]

    \node at (-7,0) {$(a)$};

  \matrix[mymatrix] (m) {
    8&6&4&2&0&1&3&5&7&9 \\
    8&6&4&2&0&1&3&5&7&9 \\
  };
  \draw[dashed,-stealth](m-1-1)--(m-2-1);
  \draw[dashed,-stealth](m-1-2)--(m-2-2);
  \draw[dashed,-stealth](m-1-3)--(m-2-3);
  \draw[dashed,-stealth](m-1-4)--(m-2-4);
  \draw[dashed,-stealth](m-1-5)--(m-2-5);
  \draw[dashed,-stealth](m-1-6)--(m-2-6);
  \draw[dashed,-stealth](m-1-7)--(m-2-7);
  \draw[dashed,-stealth](m-1-8)--(m-2-8);
  \draw[dashed,-stealth](m-1-9)--(m-2-9);
  \draw[dashed,-stealth](m-1-10)--(m-2-10);
  
  \draw[-stealth] (m-2-2) -- (m-1-1);
  \draw[-stealth] (m-2-3) -- (m-1-2);
  \draw[-stealth] (m-2-4) -- (m-1-3);
  \draw[-stealth] (m-2-5) -- (m-1-4);
  \draw[-stealth] (m-2-6) -- (m-1-5);
  \draw[-stealth] (m-2-7) -- (m-1-6);
  \draw[-stealth] (m-2-8) -- (m-1-7);
  \draw[-stealth] (m-2-9) -- (m-1-8);
  \draw[-stealth] (m-2-10) -- (m-1-9);
\endtikzpicture
%\end{center}
% \end{figure}
 %End of figure
 
% \begin{figure}[H]
% \begin{center}
 \tikzpicture
  [mymatrix/.style={
    matrix of nodes,
    column sep=.7cm,
    row sep=1cm,
    row 2/.style={minimum height=.1cm}}
  ]

      \node at (-7,0) {$(b)$};

  \matrix[mymatrix] (m) {
    8&6&4&2&0&1&3&5&7&9 \\
    8&6&4&2&0&1&3&5&7&9 \\
  };
  \draw[dashed,-stealth](m-1-1)--(m-2-1);
  \draw[dashed,-stealth](m-1-2)--(m-2-2);
  \draw[dashed,-stealth](m-1-3)--(m-2-3);
  \draw[dashed,-stealth](m-1-4)--(m-2-4);
  \draw[dashed,-stealth](m-1-5)--(m-2-5);
  \draw[dashed,-stealth](m-1-6)--(m-2-6);
  \draw[dashed,-stealth](m-1-7)--(m-2-7);
  \draw[dashed,-stealth](m-1-8)--(m-2-9);
  \draw[dashed,-stealth](m-1-9)--(m-2-10);

  \draw[-stealth] (m-2-2) -- (m-1-1);
  \draw[-stealth] (m-2-3) -- (m-1-2);
  \draw[-stealth] (m-2-4) -- (m-1-3);
  \draw[-stealth] (m-2-5) -- (m-1-4);
  \draw[-stealth] (m-2-6) -- (m-1-5);
  \draw[-stealth] (m-2-7) -- (m-1-6);
  \draw[-stealth] (m-2-8) -- (m-1-7);
  \draw[-stealth] (m-2-9) -- (m-1-9);
  \draw[-stealth] (m-2-10) -- (m-1-10);
\endtikzpicture
%\end{center}

% \end{figure}
 %End of figure
 
% \begin{figure}[H]
% \begin{center}
 \tikzpicture
  [mymatrix/.style={
    matrix of nodes,
    column sep=.7cm,
    row sep=1cm,
    row 2/.style={minimum height=.1cm}}
  ]

  \node at (-7,0) {$(c)$};
  
  \matrix[mymatrix] (m) {
    8&6&4&2&0&1&3&5&7&9 \\
    8&6&4&2&0&1&3&5&7&9 \\
  };
  \draw[dashed,-stealth](m-1-1)--(m-2-1);
  \draw[dashed,-stealth](m-1-2)--(m-2-2);
  \draw[dashed,-stealth](m-1-3)--(m-2-3);
  \draw[dashed,-stealth](m-1-4)--(m-2-4);
  \draw[dashed,-stealth](m-1-5)--(m-2-5);
  \draw[dashed,-stealth](m-1-6)--(m-2-6);
  \draw[dashed,-stealth](m-1-7)--(m-2-7);
  \draw[dashed,-stealth](m-1-8)--(m-2-8);
  \draw[dashed,-stealth](m-1-9)--(m-2-10);

  \draw[-stealth] (m-2-2) -- (m-1-1);
  \draw[-stealth] (m-2-3) -- (m-1-2);
  \draw[-stealth] (m-2-4) -- (m-1-3);
  \draw[-stealth] (m-2-5) -- (m-1-4);
  \draw[-stealth] (m-2-6) -- (m-1-5);
  \draw[-stealth] (m-2-7) -- (m-1-6);
  \draw[-stealth] (m-2-8) -- (m-1-7);
  \draw[-stealth] (m-2-9) -- (m-1-9);
  \draw[-stealth] (m-2-10) -- (m-1-10);
\endtikzpicture
\end{center}
\caption{
Construction for Proposition~\ref{A10}.
  (a): $f_0$ (solid) and $f_1$ (dashed) when $0$ is not in the range of
  $g$. (b): $f_0$ (solid) and $f_1$ (dashed) when $g(2)=0$.
  (c): $f_0$ (solid) and $f_1$ (dashed) when $g(3)=0$.
}\label{B10fig}
% \caption{$f_0$ (solid) and $f_1$ (dashed) when $g(2)=0$}\label{B10fig2}
%  \caption{$f_0$ (solid) and $f_1$ (dashed) when $g(3)=0$}\label{B10fig3}
 \end{figure}
%End of figure

Figure~\ref{B10fig}(a)  corresponds to the situation when $0$ does not appear in the range of $g$.
Any bijection $h$ satisfying Banach's theorem must either consist of all the (inverses of the) dashed arrows or all the solid arrows. In this situation, $h(1)$ may be $0$ or $1$.

Figure~\ref{B10fig}(b) corresponds to the case when $g(2)=0$ is the first zero in the range of $g$.  In this case, $5$ must be in the domain of $h$, so $h(5)=f_0(5)=3$. The only bijection satisfying Banach's theorem consists of solid arrows to the left of $5$, so $h(1)=0$.

Figure~\ref{B10fig}(c) is for the case when $g(3)=0$ is the first zero in the range of $g$.  Here $5$ must be in the range of $h$, so $h(5)=f^{-1}_1(5)=5$.  The only bijection satisfying Banach's theorem consists of (inverses of the) dashed arrows to the left of $5$, and so $h(1)=1$. If $0$ first appears in the range of $g$ at an even
value, the the figure for $f_0$ and $f_1$ will be a shifted version of the second figure.  Odd values yield a shifted version of the third figure.

Because $f_0(n)$ is never less than $n-2$, $b_0(n)=n+2$ is a bounding function for $f_0$.
Similarly, $f_1(n)$ is never less than $n$, so $b_1(n) = n$ is a bounding function for $f_1$.
Routine verifications show that for any choice
of $g$, $f_0$ and $f_1$ will be injections bounded by $b_0$ and $b_1$.  Suppose that $h$ is any bijection satisfying Banach's theorem for $f_0$, $f_1$, $b_0$, and $b_1$.  If the first $0$ in the range of $g$ occurs at an even value, then $h(1)=0$.  If it occurs at an odd value, then $h(1)=1$. If $0$ is not in the range of $g$, then $h(1)$ may be either $0$ or~$1$.  $\rcaw$ proves the existence of the functional mapping $g$ to the bounded injections $\vec f _g= \langle f_1 , f_1 , b_0 , b_1 \rangle$
as defined above.  The functional ${\sf{bB}}_\setN(\vec f _g )$ yields the bijection $h$ for $\vec f _g$. Consequently, the functional mapping $g$ to ${\sf{bB}}_\setN(\vec f _g )(1)$ (which equals $h(1)$)
is ${\llpomin}(g)$.
\end{proof}
 
Concatenating the preceding arguments yields the desired equivalence theorem and concludes the section.
 
\begin{theorem}[$\rcaw $]\label{BanachN}
The following are equivalent:
 \begin{enumerate}
 \item  $(\exists^2 )$.
 \item  $({\sf{B}}_\setN)$.
 \item $({\sf{bB}}_\setN)$.
 \end{enumerate}
\end{theorem}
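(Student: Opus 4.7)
The plan is to close the loop $(1) \Rightarrow (2) \Rightarrow (3) \Rightarrow (1)$ by concatenating results already established in this section and earlier sections, rather than proving anything new from scratch.

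For $(1) \Rightarrow (2)$, I would simply invoke Proposition~\ref{A10}, which already gives that $(\exists^2)$ implies $({\sf B}_\setN)$. The construction there already uses $(\sf b)$ to manufacture bounding functions from the injections, so no additional work is required.

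For $(2) \Rightarrow (3)$, the implication is essentially trivial on the functional level: given a Banach functional ${\sf B}_\setN$ for unbounded injections, define
\[
{\sf bB}_\setN(f_0, f_1, b_0, b_1) = {\sf B}_\setN(f_0, f_1),
\]
discarding the bounding arguments. The resulting functional satisfies the definition of a bounded Banach functional because the Banach conditions on the output $h$ depend only on $f_0$ and $f_1$, not on the bounding functions. The required composition is primitive recursive in the inputs, so $\rcaw$ suffices to form it.

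For $(3) \Rightarrow (1)$, I would chain Proposition~\ref{B10} with Theorem~\ref{E2LLPO}: Proposition~\ref{B10} gives $({\sf bB}_\setN) \to (\llpo)$, and then Theorem~\ref{E2LLPO} supplies $(\llpo) \to (\exists^2)$. Concatenating the relevant functionals (the $g \mapsto \vec f_g$ functional of Proposition~\ref{B10}, the bounded Banach functional itself, the projection onto $h(1)$, and the functional converting $(\llpomin)$ into $(\exists^2)$ inside Theorem~\ref{E2LLPO}) produces an $\exists^2$ functional from the assumed ${\sf bB}_\setN$. There is no real obstacle here; the only point to be careful about is making sure each functional composition is formable in $\rcaw$, which is immediate because the pieces are either primitive recursive in their inputs or given as hypotheses.
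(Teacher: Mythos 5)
Your proposal is correct and follows exactly the paper's own argument: Proposition~\ref{A10} for $(1)\Rightarrow(2)$, the observation that $({\sf{bB}}_\setN)$ is obtained from $({\sf{B}}_\setN)$ by discarding the bounding arguments for $(2)\Rightarrow(3)$, and the chain of Proposition~\ref{B10} with Theorem~\ref{E2LLPO} for $(3)\Rightarrow(1)$. No gaps.
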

 
\begin{proof}
 Proposition~\ref{A10} shows that (1) implies (2).  Because $({\sf{bB}}_\setN)$ is a restriction of $({\sf{B}}_\setN)$, (2) implies (3) is immediate.  Proposition~\ref{B10} and Theorem~\ref{E2LLPO} show that (3) implies~(1).
\end{proof}

\section{Banach's Theorem on compact spaces}\label{sec:banachcompact}

Our next goal is to analyze the strength of Banach's theorem restricted to uniformly continuous functions on
complete separable metric spaces.  We formalize complete separable metric spaces in the manner
of Simpson~\cite{sim09}*{II.5}.  The space $\hat A$ is the collection of rapidly converging sequences
of elements of an underlying (countable) set $A$.  The metric is a function $d\colon A\times A \to \mathbb R$,
extended to $\hat A$ by defining
$d(\langle a_i \rangle_{i \in \setN}, \langle a^\prime_i \rangle_{i \in \setN})= \langle d(a_i , a^\prime_i )\rangle_{i \in \setN}$.
As in Definition~III.2.3 of Simpson~\cite{sim09}, a space is {\sl compact} if there is an infinite sequence of finite
sequences of points of $\hat A$ of the form $\langle \langle x_{ij} : i \le n_j \rangle : j \in \setN \rangle$, such that
for all $z \in \hat A$ and $j \in \setN$, there is an $i \le n_j$ such that $d(x_{ij} , z ) < 2^{-j}$.

Uniform continuity can be witnessed by a modulus of uniform continuity as formalized in
Definition IV.2.1 of Simpson~\cite{sim09}.  The function $h\colon \setN \to \setN$ is a modulus of uniform continuity for $f$
if for all $k$, $|x-y|<2^{-h(k)}$ implies $|f(x)-f(y)|<2^{-k}$.
If $h_f$ is a modulus of uniform continuity for $f$ and $h_g$ is a modulus of uniform continuity for $g$,
then $h$ defined by $h(n) = \max \{ h_f (n) , h_g (n) \}$ is a modulus of uniform continuity for $f$ and $g$.
Consequently, a joint modulus can be used to simplify some statements.

Kohlenbach~\cite{koh05} defines two equivalent forms of continuity for functionals of type $1\to1$.  
First, $C^{1\to 1}$ is everywhere sequentially continuous if (\cite{koh05}*{Definition~3.3}):
\[
(\forall g^1)(\forall \langle g_n \rangle)[ \lim_{n\to \infty} g_n = g \to \lim_{n \to \infty} C(g_n ) = C(g)].
\]
Second, $C^{1\to 1}$ is everywhere $\varepsilon\text{-}\delta$ continuous if (\cite{koh05}*{Definition~3.5}):
\[
(\forall g^1)(\forall k)(\exists n)(\forall h^1)[d(g,h)<2^{-n} \to d(C(g),C(h))<2^{-k}].
\]
  This second definition is similar to familiar textbook definitions of continuity
for total functions.  The use of $n$ and $k$ reduces the type of the quantifiers corresponding to
$\delta$ and $\varepsilon$.  Proposition~3.6 of Kohlenbach~\cite{koh05} proves in $\rcaw$ that
$C$ is sequentially continuous if and only if $C$ is $\varepsilon\text{-}\delta$ continuous.

The following portion of Proposition~3.7 of Kohlenbach~\cite{koh05} is very useful in proving
reversals.

\begin{proposition}[\cite{koh05}*{Proposition~3.7}]\label{K3.7}
The following are equivalent over $\rcaw$:
\begin{enumerate}
\item  $\prince$.
\item  There is a functional which is not everywhere sequentially continuous.
\item  There is a functional which is not everywhere $\varepsilon\text{-}\delta$ continuous.
\end{enumerate}
\end{proposition}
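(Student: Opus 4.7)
I would close the cycle (1) $\Rightarrow$ (2) $\Leftrightarrow$ (3) $\Rightarrow$ (1). The equivalence (2) $\Leftrightarrow$ (3) is already given by Proposition~3.6 of Kohlenbach~\cite{koh05}, which proves in $\rcaw$ that a functional is everywhere sequentially continuous if and only if it is everywhere $\varepsilon\text{-}\delta$ continuous, so it suffices to treat (1) $\Rightarrow$ (2) and (2) $\Rightarrow$ (1).

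For (1) $\Rightarrow$ (2), I would exhibit $\exists^2$ itself as the discontinuous witness. Let $f = \langle 1,1,1,\dots\rangle$ and, for each $n$, let $g_n$ be the sequence of $n$ ones followed by all zeros. These are primitive recursive in $n$, so $\rcaw$ forms the sequence. Then $g_n \to f$ in Baire space, while $\exists^2(f) = 1$ and $\exists^2(g_n) = 0$ for every $n$, so $\exists^2$ fails sequential continuity at $f$.

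For (2) $\Rightarrow$ (1), I would formalize Grilliot's trick in essentially the form appearing in the proof of Proposition~\ref{llpoimpliesE2}. Suppose $C$ fails to be sequentially continuous at a point $f$, witnessed by a sequence $\langle g_n\rangle$ with $g_n \to f$ but $C(g_n)\not\to C(f)$. For functionals with output in $\setN$ this directly yields $C(g_n) \neq C(f)$ for infinitely many $n$; for functionals with output in $\setN^\setN$ it yields a coordinate $k$ on which $\langle C(g_n)(k)\rangle$ fails to converge to $C(f)(k)$. After thinning inside $\rcaw$, assume $C(g_n)(k) \neq C(f)(k)$ for every $n$. Define
\[
J(h)(j) =
\begin{cases}
f(j), & \text{if } (\forall t \le j)\,[h(t) \neq 0],\\
g_i(j), & \text{if } i \le j \text{ and } i = (\mu\, t \le j)\,[h(t) = 0].
\end{cases}
\]
Primitive recursion and $\lambda$-abstraction supply $J$ in $\rcaw$. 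Then $J(h) = f$ when $h$ has no zero, while $J(h) = g_i$ when $i$ is the least zero of $h$. Consequently $C(J(h))(k) = C(f)(k)$ if and only if $h$ has no zero; composing with the primitive recursive characteristic function of equality on $\setN$ produces a realizer for $\exists^2$.

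The step I expect to require the most care is the thinning in the type-$1\to 1$ case: fixing a single coordinate $k$ on which $\langle C(g_n)(k)\rangle$ fails to converge to $C(f)(k)$, and then refining $\langle g_n\rangle$ to a subsequence on which the disagreement at $k$ is uniform. Both are instances of $\QF\AC^{1,0}$-style selection whose matrix becomes quantifier free once the parameters are fixed, so $\rcaw$ handles them. The remaining verifications are routine closure properties of primitive recursion, mirroring the direct derivation of $\exists^2$ from $(\llpomin)$ given at the end of the proof of Proposition~\ref{llpoimpliesE2}.
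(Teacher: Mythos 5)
The paper itself gives no proof of this proposition: it is quoted directly from Kohlenbach~\cite{koh05}*{Proposition~3.7}, and the only fragment of the argument the paper reproduces is the concrete instance of Grilliot's trick at the end of the proof of Proposition~\ref{llpoimpliesE2}. Your plan is the standard proof of the cited result and matches that source: (2)$\leftrightarrow$(3) from Kohlenbach's Proposition~3.6, (1)$\Rightarrow$(2) by exhibiting $\exists^2$ as sequentially discontinuous at the all-ones sequence (note that the defining axiom only forces $\exists^2(f)\neq 0$ rather than $\exists^2(f)=1$, but that inequality is all you need), and (2)$\Rightarrow$(1) by Grilliot's trick.

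One point in your (2)$\Rightarrow$(1) step is under-specified and would fail as literally written. With your definition of $J$, if the least zero of $h$ is $i$, then $J(h)$ agrees with $f$ on coordinates $j<i$ and with $g_i$ on coordinates $j\ge i$; this hybrid equals $g_i$ only when $g_i$ agrees with $f$ on its first $i$ values. Without that, $J(h)$ need not be any term of the sequence, and you cannot conclude $C(J(h))(k)\neq C(f)(k)$. The fix is the thinning you already flag, but aimed at an additional target: besides fixing $k$ and passing to a subsequence on which $C(g_n)(k)\neq C(f)(k)$ for every $n$, you must simultaneously arrange, using $g_n\to f$ and $\QF\AC^{0,0}$ plus primitive recursion, that the $n$-th term of the thinned sequence agrees with $f$ on its first $n$ entries. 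The paper's concrete instance in Proposition~\ref{llpoimpliesE2} gets this for free, since there $f$ is all ones and $g_i$ begins with more than $i$ ones. With that adjustment your argument goes through in $\rcaw$.
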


For uniformly continuous functionals on compact complete separable metric spaces, it is possible
to find ranges using only $\prince$.  Indeed, the next two lemmas show that
for Cantor space the existence of ranges is equivalent,
a higher order analog of Lemma~III.1.3 of Simpson~\cite{sim09}

\begin{lemma}[$\rcaw + \prince$]\label{0628A}
 Suppose $X$ is a compact complete separable metric space.
There is functional $R$ such that if $f\colon X \to X$ is a function with modulus of uniform
continuity $h$, $R(f,h)$ is the characteristic function of the range of $f$.  That is,
for all $y\in X$, $R(f,h)(y) \in \{0,1\}$ and $R(F,h)(y)=1$ if and only if $(\exists x \in X)[f(x)=y]$.
\end{lemma}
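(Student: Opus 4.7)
The plan is to define $R(f,h)(y)$ as the truth value of the arithmetic condition
\[
C(f,h,y) \equiv (\forall k)(\exists i \le n_{h(k+1)})\bigl[d(f(x_{i,h(k+1)}), y) < 2^{-k}\bigr],
\]
where $\langle\langle x_{ij}: i\le n_j\rangle : j \in \setN\rangle$ is a fixed sequence witnessing the compactness of $X$. I would then verify in $\rcaw + \prince$ both that $C(f,h,y)$ holds if and only if $y$ lies in the range of $f$, and that $C$ can be decided uniformly in its inputs by a functional.

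The forward direction of the equivalence is a routine modulus-of-continuity calculation: if $y = f(x)$, the $2^{-h(k+1)}$-density of the cover supplies an $i \le n_{h(k+1)}$ with $d(x_{i,h(k+1)}, x) < 2^{-h(k+1)}$, and the modulus then yields $d(f(x_{i,h(k+1)}), y) < 2^{-(k+1)} < 2^{-k}$. The backward direction is more delicate. From $C$, bounded search produces witnesses $i_k$, giving a sequence $z_k := x_{i_k, h(k+1)}$ in $X$ with $d(f(z_k), y) < 2^{-k}$. Since $\prince$ entails $(\WKL)$ by Proposition~\ref{E2WKL}, sequential compactness of $X$ is available, yielding a subsequential limit $x \in X$ of $\{z_k\}$; continuity then forces $f(x) = y$. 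Producing this preimage from approximate witnesses is the principal obstacle, and it is where compactness, rather than mere arithmetic decidability, enters the argument.

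For decidability, I would use that $\prince$ is equivalent to Feferman's $\mu$ functional by Proposition~\ref{K3.9}, and therefore suffices to decide arithmetical predicates about type-$1$ parameters. For each $k$ and each $i$, the inequality $d(f(x_{i,h(k+1)}), y) < 2^{-k}$ is $\Sigma^0_1$ in the rapidly converging approximations to $f(x_{i,h(k+1)})$ and $y$, and taking the finite disjunction over $i \le n_{h(k+1)}$ keeps the resulting inner predicate $\Sigma^0_1$ in $k$. One application of $\mu$ produces its characteristic function $b\colon\setN\to\{0,1\}$, and a second application decides whether $(\forall k)\,b(k)=1$. The entire construction is uniform in $(f,h,y)$ and packages via $\lambda$-abstraction into a functional $R$ of the required signature.
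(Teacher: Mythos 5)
Your proposal is correct and takes essentially the same route as the paper: decide the arithmetic condition ``for every $k$ some $x_{i,h(\cdot)}$ maps within $2^{-k}$ of $y$'' using $\exists^2$/$\mu$, extract an exact preimage from the approximate witnesses via a subsequential limit and sequential continuity, and use the modulus of uniform continuity to show the condition fails only when $y$ is outside the range. The only quibble is a citation: the subsequential limit comes from the Bolzano--Weierstrass theorem for compact metric spaces, provable in $\ACAo$ (Simpson, Theorem~III.2.7) and hence available under $\prince$, rather than from $(\WKL)$ via Proposition~\ref{E2WKL}.
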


\begin{proof}
Working in $\rcaw$, let $X$ be as hypothesized, with the compactness of $X$ witnessed
by $\langle\langle x_{ij} : i\le n_j \rangle : j \in \setN \rangle$.  Consider a function $f\colon X \to X$
with modulus of uniform continuity $h$.  Informally, a value $y \in X$ is in the range of $f$ if and
only if for every $m$ there is an $x$ with $d(F(x),y) < 2^{-m}$.  By uniform continuity
and compactness, such an $x$
exists if and only if there is an $i \le n_{h(m)}$ such that $d(f(x_{ih(m)}),y)<2^{-m}$.
In $\rcaw$, for $f\colon X \to X$ and $h\colon \setN \to \setN$, we may define
\[
K(f,h,y,m)=
\begin{cases}
1, &{\text{if~}} (\exists i \le n_{h(m)})[d(f(x_{ih(m)}),y)<2^{-m}]\\
0, &{\text{otherwise.}}
\end{cases}
\]
Viewing $K$ as a function in $m$ with parameters $f$, $h$, and $y$, in $\prince$ we may
define $R(f,h)(y) = \varphi (K(F,h,y,m))$.  Informally, by the definition of~$\varphi$,
$R(f,h)(y)=1$ if and only if for all $m$ there is an $x$ with $d(f(x),y)<2^{-m}$.  Note that
the termination of the calculation of $R(f,h)$ does not depend on the continuity of $f$
or the correctness of $h$.

To complete our proof, we must verify in $\rcaw + \prince$ our informal claim that for
each continuous function $f$ with modulus of uniform continuity $h$, $R(f,h)$ is the characteristic function
for the range of $f$.  First,
if $R(f,h)(y) = 1$, then there is a sequence $\langle x^\prime_{i_m} \rangle$ such that for every $m$,
$d(f(x^\prime_{i_m}), y) < 2^{-m}$.
The principle $\prince$ implies $\ACAo$ which implies the Bolzano--Weierstrass theorem
(see Theorem III.2.7 of Simpson~\cite{sim09}), so we can
thin $\langle x^\prime_{i_m} \rangle$ to a sequence converging to some $x \in X$.  By
sequential continuity of $f$, we have $f(x) = y$.

Second, if $R(f,h)(y) = 0$, then for some natural number $m$, we must have $(\forall i \le n_{h(m)} )[d(f(x_{i h(m)}, y)\ge 2^{-m}]$.
Suppose by way of contradiction that $f(x)=y$.  Choose $i\le n_{h(m)}$ such that $d(x, x_{i h(m)} )< 2^{-h(m)}$.
Because $f$ is uniformly continuous, $d(f(x_{i h(m)}), f(x) ) < 2^{-m}$.  Concatenating inequalities, we have
\[
2^{-m} \le d(f(x_{i h(m)} ) , y) = d(f(x_{i h(m)} ) ,f(x))< 2^{-m},
\]
a contradiction.  Thus, $R(f,h)(y) = 0$ implies $(\forall x \in X)[f(x) \neq y]$, completing the proof.
\end{proof}

\begin{lemma}[$\rcaw$]\label{rangereversal}
The following are equivalent:
\begin{enumerate}
\item   $\prince$.
\item  If $X$ is a compact complete separable metric space,
then there is functional $R$ such that if $f\colon X \to X$ is a function with modulus of uniform
continuity $h$, $R(f,h)$ is the characteristic function of the range of $f$.
\item There is functional $R$ such that if $f\colon 2^\setN \to 2^\setN$ is a function with modulus of uniform
continuity $h$, $R(f,h)$ is the characteristic function of the range of $f$.
\end{enumerate}
\end{lemma}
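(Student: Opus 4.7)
The implication $(1) \Rightarrow (2)$ is exactly Lemma~\ref{0628A}, and $(2) \Rightarrow (3)$ is immediate because $2^{\setN}$ is a compact complete separable metric space. The plan is therefore to reverse, i.e.\ to prove $(3) \Rightarrow (1)$.

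Assume $R$ is a range functional as in item~(3). Given $g \in \setN^{\setN}$, define $f_g : 2^{\setN} \to 2^{\setN}$ by
\[
f_g(x)(n) = \begin{cases} x(n), & \text{if } (\forall m \leq n)[g(m) \neq 0],\\ 0, & \text{otherwise.}\end{cases}
\]
The defining condition is bounded and primitive recursive in $g$, so $\rcaw$ proves the existence of a functional $g \mapsto f_g$. Each $f_g$ is $1$-Lipschitz on $2^{\setN}$: if $x$ and $y$ agree up through position $k$, then so do $f_g(x)$ and $f_g(y)$. Consequently the single modulus $h(k) = k$ is a modulus of uniform continuity for every $f_g$, independent of $g$, and $\rcaw$ also produces the map $g \mapsto (f_g,h)$.

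The key observation is an all-or-nothing behavior at the test point $1^{\setN}$: if $g$ has no zero, then $f_g$ is the identity, so $1^{\setN} \in \operatorname{range}(f_g)$; whereas if $g$ has a first zero at some $k$, then $f_g(x)(k) = 0$ for every $x$, so $1^{\setN} \notin \operatorname{range}(f_g)$. Hence defining
\[
\varphi(g) = R(f_g, h)(1^{\setN})
\]
by composition yields, in $\rcaw$ together with~$(3)$, a functional satisfying $\varphi(g) = 0 \leftrightarrow (\exists n)[g(n) = 0]$. This $\varphi$ is precisely a witness for $\prince$, completing the reversal.

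The main obstacle is ensuring, inside the higher order formalism, that the $f_g$ produced above is a legitimate input for $R$ together with the accompanying modulus $h$, and that the assignment $g \mapsto f_g$ is uniformly definable in~$\rcaw$. Since the construction is $1$-Lipschitz with a common modulus and is given by a bounded primitive recursive formula in $g$, this is routine in Kohlenbach's framework. Once it is in hand, the test at $1^{\setN}$ converts the range question into the desired $\exists^2$ oracle, and no appeal to Proposition~\ref{K3.7} is needed.
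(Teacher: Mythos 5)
Your proof is correct and follows essentially the same route as the paper: $(1)\Rightarrow(2)$ is Lemma~\ref{0628A}, $(3)$ is a special case of $(2)$, and the reversal $(3)\Rightarrow(1)$ builds, uniformly in $g$, a uniformly continuous function with a fixed modulus so that a designated test point lies in its range if and only if $g$ has no zero, then reads off $R_\lpo(g)$ from the range functional. The only difference is the gadget: the paper uses the constant function with value $S(g)$ and tests the point $\vec 0$, while you use a truncation of the identity and test $1^{\setN}$; the two are interchangeable.
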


\begin{proof}
We will work in $\rcaw$.  By Lemma \ref{0628A}, item (1) implies item (2).  Item~(3) is a special
case of item (2), so we need only show that item (3) implies item (1).

In $\rcaw$, we can prove the existence of the function that maps an arbitrary function $f\colon \setN \to \setN$ to an
element of Cantor space $f^\prime \colon \setN \to 2$ so that, for all $n$, $f^\prime (n) = 1$ if and only if $f(n)>0$.
In terms of the function from the definition of $\prince$, ${\sf R}_\lpo (f) = {\sf R}_\lpo (f^\prime )$.
$\rcaw$ can also prove the existence of the transformation $S\colon 2^\setN \to 2^\setN$ such that for all $f\colon \setN \to 2$,
$S(f)(n)=0$ if $(\forall m\le n)[f(m) \neq 0]$ and $S(f)(n) =1$ otherwise.  Let $\mathcal C$ denote the set of functions
from $2^\setN$ to $2^\setN$. $\rcaw$ proves the existence of the function $T\colon 2^\setN \to \mathcal C$ which maps each
$f \in 2^\setN$ to the constant function in $\mathcal C$ that takes the value $S(f)$.  For each $f$, because $T(f)$ is
a constant function, the constant $0$ function on $\setN$, denoted by $z(n) \equiv 0$,
is a modulus of uniform continuity for $T(f)$.
(Any function could be used as a modulus.)  For any $f \in \setN^\setN$, $z$ is in the range of $T(f^\prime)$ if and only
if $0$ is not in the range of $f^\prime$, and this occurs if and only if ${\sf R}_\lpo (f) = 1$.  Using the functional from
item (3), we have ${\sf R}_\lpo (f) = R(T(f^\prime ) ,z ) (z)$, so item (3) implies $\prince$.
\end{proof}  

Our proof of Banach's theorem in compact metric spaces requires a functional that can calculate the
inverse of a given function.  The next two lemmas show that $\prince$ is sufficient and also necessary for this task.

\begin{lemma}[$\rcaw + \prince$]\label{inverse}
Suppose $X$ is a compact complete separable metric space.  There is a function $I$
such that if $f\colon X \to X$ is a function with modulus of uniform continuity $h$, then $I(f,h)$ is a function that
selects elements from the pre-image of $f$.  That is, for all $y\in X$, if there is an $p$ such that$f(p)=y$,
then $f(I(f,h)(y)) = y$.
In particular, if $f$ is injective, then the restriction of $I(f,h)$ to the range of $f$ is the inverse of $f$.
\end{lemma}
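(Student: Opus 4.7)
The plan is to work in $\rcaw + \prince$, which by Proposition~\ref{K3.9} provides Feferman's functional $\mu$ and by Proposition~\ref{E2WKL} provides the uniform weak K\"onig's lemma functional $\WKL$. Fix a compactness witness $\langle \langle x_{ij} : i \le n_j \rangle : j \in \setN \rangle$ for $X$, and replace the given modulus $h$ by $\tilde h(k) = \max(h(k), k + 3)$ so that $\tilde h$ dominates $k$ comfortably while remaining a modulus of uniform continuity for $f$. The idea is to build, uniformly in $f$, $h$, and $y$, a bounded-branching tree of index sequences whose infinite paths encode rapidly converging sequences whose images under $f$ converge to $y$, and then apply the $(\WKL)$ functional to extract such a path.

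Concretely, I would call a finite sequence $\sigma = \langle i_0, \dots, i_{k-1} \rangle$ \emph{admissible} if $i_j \le n_{\tilde h(j+2)}$ for each $j < k$ and the associated net points $a_j = x_{i_j,\, \tilde h(j+2)}$ satisfy (a) the approximation condition $d(f(a_j), y) < 2^{-j}$, and (b) for $j \ge 1$, the coherence condition $d(a_{j-1}, a_j) < 2^{-(j-1)}$. Both clauses are quantifier-free relations on the raw data $f(x_{i,\tilde h(j+2)})$, the compactness data, and $y$, so the tree $T_{f,h,y}$ of admissible sequences exists as a primitive recursive functional of $(f,h,y)$. Padding each level into its binary encoding, using the explicit branching bound $n_{\tilde h(j+2)}$, produces a $\{0,1\}$-subtree $T'_{f,h,y}$ of $2^{<\setN}$ which is infinite exactly when $T_{f,h,y}$ is.

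The next step is to show that when $y = f(p)$ for some $p = \langle p_j \rangle \in X$, the tree $T'_{f,h,y}$ is infinite. The $2^{-\tilde h(j+2)}$-net property selects $i_j$ with $d(a_j, p_{\tilde h(j+2)}) < 2^{-\tilde h(j+2)}$; the modulus $\tilde h$ forces $d(f(a_j), f(p)) < 2^{-(j+2)}$, yielding (a), and rapid convergence of $\langle p_j \rangle$ together with two applications of the triangle inequality yields (b), since the extra room built into $\tilde h$ absorbs the telescoping terms. Applying the $(\WKL)$ functional to $T'_{f,h,y}$ then produces an infinite path, which decodes to a rapidly converging sequence $\langle a_j \rangle$ defining a point $q \in X$. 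Condition (a) together with sequential continuity of $f$ (which holds because $h$ is a modulus of uniform continuity, via Proposition~\ref{K3.7} or a direct $\varepsilon$--$\delta$ argument) forces $f(q) = y$, and I set $I(f,h)(y) := q$.

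The main obstacle I anticipate is enforcing \emph{totality} of the assignment $(f,h,y) \mapsto T'_{f,h,y}$ even when $f$ is not continuous or $h$ is not a genuine modulus, because the statement only guarantees behavior of $I(f,h)(y)$ when a preimage exists but nonetheless requires a single functional defined on all inputs. Since the admissibility clauses are quantifier-free and evaluable directly from the inputs, the tree is always defined; when $y \notin f(X)$ the tree may be finite and the $(\WKL)$ functional returns an arbitrary but well-defined output, which is permissible. A secondary concern is the exact calibration of the index shifts ($\tilde h(j+2)$ rather than $\tilde h(j)$) so that both admissibility conditions hold with strictly positive slack whenever a preimage exists; this justifies the replacement of $h$ by $\tilde h$ at the outset and is the only place routine numerical bookkeeping is required.
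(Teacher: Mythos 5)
Your proof is essentially correct, but it takes a genuinely different route from the paper's. The paper constructs the preimage point directly by stage-by-stage recursion: at stage $m$ it picks the least index $i \le n_{h(m)}$ satisfying an approximation clause, a coherence clause, \emph{and} a lookahead clause (clause (2)) asserting that the chosen net point remains extendible at all later levels; the lookahead clause has an unbounded universal quantifier, so an $\lpo$-realizer from $\prince$ is used to evaluate it, and Bolzano--Weierstrass (available since $\prince$ implies $\ACAo$) is used to verify inductively that a suitable index always exists. Your version instead packages all candidate approximating sequences into a finitely branching tree and invokes the $(\WKL)$ functional of Proposition~\ref{E2WKL}. This buys you something real: because weak K\"onig's lemma finds an infinite path globally, dead-end branches are harmless and you can dispense with the lookahead clause entirely, and correctness holds for \emph{every} infinite path (condition (a) plus sequential continuity forces $f(q)=y$). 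What the paper's approach buys is a canonical (leftmost) choice of preimage and no need for a tree encoding. One technical point in your writeup should be repaired: the admissibility clauses are not quantifier-free, since strict inequalities between reals presented as rapidly converging Cauchy sequences are $\Sigma^0_1$; the tree is therefore not primitive recursive in $(f,h,y)$ as you claim, and you need either an application of $\exists^2$ to decide the clauses (harmless, since $\prince$ is assumed anyway) or a reformulation in terms of fixed-precision rational approximations. Also, sequential continuity of a uniformly continuous $f$ should be cited to Kohlenbach's Proposition~3.6 or argued directly, not to Proposition~\ref{K3.7}, which concerns the reverse direction (discontinuity implying $\prince$). Finally, note that the paper additionally invokes the range functional of Lemma~\ref{0628A} to handle $y \notin f(X)$ cleanly; your reliance on $(\WKL)$ returning an arbitrary total output on finite trees is an acceptable substitute, provided you patch the decoded sequence into a legitimate point of $X$ (e.g., defaulting to $y$) when the output fails to be rapidly converging.
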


\begin{proof}
Suppose that the compactness of $X$ is witnessed by the sequence of finite sequences
$\langle\langle x_{ij} : i \le n_j \rangle : j \in \setN\rangle$.
Thus for all $z\in X$, there is an $x_{ij}$ in $\langle x_{ij} : i \le n_j \rangle$ such that
$d(x_{ij} , z) < 2^{-j}$.  Given a function $f$ with a modulus of uniform continuity $h$, for
each $y$ we will calculate a rapidly converging subsequence $p= \langle p_m : m\in \setN \rangle$
such that if $y$ is in the range of $f$ then $f(p)=y$.  We will argue that this calculation is sufficiently
uniform that the desired function $I$ can be found using $\rcaw + \prince$.

Fix $f\colon X\to X$ with modulus of uniform continuity $h$, so that if $d(t_1 , t_2 ) < 2^{-h(k)}$
then $d(f(t_1), f(t_2))< 2^{-k}$.  Increasing $h$ if necessary, we may assume that $h(k) \ge k+3$ for all $k$.
Using the witness points for compactness, if $y$ is in the range of $f$, then for all $j$ we have
$(\exists k \le n_{h(j)})[ d(f(x_{k,h(j)},y) < 2^{-j}]$.

Given $f$, $h$, and $y$ as above, we can define the desired
$p = \langle p_m : m \in \setN\rangle$.  If $y$ is not in the range of $f$, let $p=y$.
If $y$ is in the range of $f$ construct
$\langle p_m : m \in \setN\rangle$ as follows.  Let $p_m = x_{ih(m)}$ where $i \le n_{h(m)}$ is the
least integer such that:
\begin{enumerate}
\item  $d(f(x_{ih(m)}),y) < 2^{-m}$,
\item  $(\forall j>m )(\exists k \le n_{h(j)} )[d(f(x_{kh(j)},y) < 2^{-j} \land d(x_{kh(j)} , x_{ih(m)})<2^{-m-2}]$, and
\item  if $m>0$, then $d(p_{m-1},x_{i h(m)} ) \le 2^{-m}$.
\end{enumerate}
The third clause ensures that $p= \langle p_m : m \in \setN \rangle$ is a rapidly converging Cauchy sequence.
By Proposition 3.6 of Kohlenbach~\cite{koh05}, $\prince$ proves that $f$ is sequentially continuous, so the
first clause shows that $f(p)=y$.  Informally, the second clause guarantees that each $p_m$ is sufficiently close
to a pre-image of $y$ that the construction can continue.  We verify this next.

To initialize the construction, we must find $p_0$.
Suppose $f(t_0 ) = y$.  Because $h(2) \ge 0+3$ we can fix an $i \le n_{h(0)}$ with $d(x_{ih(0)},t_0 )<2^{-3}$.
Because $h$ is a modulus of uniform continuity, $d(f(x_{ih(0)}),y) = d(f(x_{ih(0)} ), f(t_0 ))<2^{-0}$, so clause (1)
is satisfied.  For any $j>0$, there is a $k \le n_{h(j)}$ such that
$d(x_{kh(j)} , t_0 ) < 2^{-h(j)} < 2^{-3}$, and so
$d(f(x_{kh(j)} , y ) = d(f(x_{kh(j)} ) , f(t_0 ))< 2^{-j}$.  For such a $j$ and $k$,
\[
d(x_{kh(j)} , x_{ih(0)}) \le d(x_{kh(j)},t_0 ) + d(x_{ih(0)} , t_0 ) < 2^{-3}+2^{-3} = 2^{-2},
\]
so clause (2) is also satisfied.  The third clause is vacuously true.  We have shown that
for some $i$, $x_{ih(0)}$ satisfies all three clauses.  Let $i_0$ be the least such~$i$, and set $p_0 = x_{i_0 h(0)}$.

Suppose $p_{m-1}$ has been chosen satisfying all three clauses.
By clause (2) for $p_{m-1}$, we can find a sequence of points $\langle t_{m_j} : j \in \setN \rangle$ such that for every
$j$, $d(f(t_{m_j} ) , y ) < 2^{-j}$ and $d(t_{m_j} , p_{m-1} ) < 2^{-(m-1)-2} = 2^{-m-1}$.  In Theorem III.2.7,
Simpson~\cite{sim09} proves the generalization of the Bolzano--Weierstrass theorem for compact metric
spaces in $\ACAo$, so it is also a theorem of $\rcaw + \prince$.  Consequently, there is a subsequence of
$\langle t_{m_j} : j \in \setN \rangle$ converging to a value $t_m$ with $f(t_m ) = y$ and $d(t_m , x_{m-1} ) \le 2^{-m-1}$.
Choose $i \le n_{h(m)}$ so that $d(x_{ih(m)} , t_m ) < 2^{-h(m)} < 2^{-m-3}$.  Clause (1) holds for
$x_{ih(m)}$ because $h$ is a modulus of uniform continuity and $f(t_m ) = y$.  For any $j>m$, there is a $k \le n_{h(j)}$
such that $d(x_{kh(j)}, t_m ) < 2^{-h(j)}\le 2^{-j-3}$ and so
$d(f(x_{kh(j)} ) , y ) <2^{-j}$.  For such a $j$ and $k$,
\begin{align*}
d(x_{kh(j)}, x_{ih(m)} )  \le d(x_{kh(j)} , t_0 ) &+ d(x_{ih(m)} , t_0 ) \\
& < 2^{-j-3} + 2^{-m-3} < 2^{-m-2},
\end{align*}
so clause (2) holds for $x_{i h(m)}$.
Finally,
\[d(p_{m-1} , x_{ih(m)} ) < d(p_{m-1} , t_m ) + d (x_{ih(m)} , t_m )<2^{-m-1} + 2^{-m-3} < 2^{-m}.\]
We have shown that all three clauses hold for some choice of $i$, so let $i_m$ be the least such $i$ and
set $p_m = x_{i_m h(m)}$.  This concludes the argument that our construction never halts, yielding
the desired pre-image $p = \langle p_m : m \in \setN \rangle$.

It remains to show that $\rcaw + \prince$ suffices to prove the existence of the function $I$ from the statement
of the lemma.  Suppose we are given $f$ with modulus $h$ and a value $y$ from the metric space.   By
Lemma \ref{0628A}, $R(f,h)$ is the characteristic function for the range of $f$.  If $y$ is not in the range
of $f$, output $y$.  Otherwise, begin constructing $p$, searching for an $x_{ih(m)}$ satisfying clauses (1), (2), and (3)
above.  By $\prince$, we may use a realizer for $\lpo$ to check if clause (2) holds.  As argued above, when
$y$ is in the range, this process calculates the desired pre-image of $y$.  Summarizing, $\rcaw + \prince$ proves
the existence of the function mapping $f$, $h$, and $y$ to the desired value.  Applying $\lambda$-abstraction
yields $I(f,h)$.
\end{proof}

The use of $\prince$ in the previous lemma is necessary, as shown by the following reversal.

\begin{lemma}[$\rcaw$]\label{inversereversal}
The following are equivalent:
\begin{enumerate}
\item $\prince$.
\item  If $X$ is a compact complete separable metric space, then there is a function $I$ such that
if $f\colon X \to X$ is a function with modulus of uniform continuity $h$, then $I(f,h)$ is a function that
selects elements from the pre-image of $f$.
\item  There is a function $I$ such that if $f\colon 2^\setN \to 2^\setN$ is a function with modulus of uniform
continuity $h$, then $I(h,f)$ is a function that selects elements from the pre-image of $f$.
\end{enumerate}
\end{lemma}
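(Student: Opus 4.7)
The implication $(1) \Rightarrow (2)$ is exactly Lemma~\ref{inverse}, and $(2) \Rightarrow (3)$ is immediate because $2^\setN$ equipped with its standard metric is a compact complete separable metric space, making (3) a special case of (2). Thus all of the substantive work lies in the reversal $(3) \Rightarrow (1)$.

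For the reversal, I would mirror the strategy of Lemma~\ref{rangereversal}. Starting from a functional $I$ witnessing (3) and an arbitrary $f\colon \setN\to\setN$, form $f'\colon \setN\to 2$ with $f'(n)=1$ iff $f(n)>0$, together with the step function $S(f')\in 2^\setN$ given by $S(f')(n)=1$ iff some $m\le n$ satisfies $f'(m)=0$. As in Lemma~\ref{rangereversal}, the key reduction is $S(f')=z$ iff $R_\lpo(f)=1$, where $z$ denotes the zero function. From $f'$ I would construct a uniformly continuous $F_{f'}\colon 2^\setN\to 2^\setN$ with an $\rcaw$-computable modulus $h$, choose a specific query point $y$, evaluate $I(F_{f'},h)(y)$, and arrange for a specific bit of this value to be equal to $R_\lpo(f)$.

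The main obstacle, not present in Lemma~\ref{rangereversal}, is that $I$ returns an element of $2^\setN$ rather than a single bit. If $F_{f'}$ were chosen to be a uniformly continuous bijection, then, since $2^\setN$ is compact, its inverse would be uniformly continuous as well, and the first bit of $I(F_{f'},h)(y)=F_{f'}^{-1}(y)$ would depend continuously on $f'$, which cannot encode the discontinuous quantity $R_\lpo(f)$. Hence $F_{f'}$ must be non-injective, and $I$'s choice of preimage (as well as its behavior on inputs outside the range) is left unconstrained by the defining property of~$I$. The planned route is therefore to invoke Proposition~\ref{K3.7} via a Grilliot-style argument analogous to Proposition~\ref{llpoimpliesE2}: choose $F_{f'}$ and $y$ so that any $I$ satisfying (3) produces a derived functional in~$f'$ which is necessarily sequentially discontinuous, regardless of how $I$ resolves its ambiguities on non-range inputs. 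The hard part of this plan is locating the forced discontinuity and showing that it cannot be masked by $I$'s freedom on inputs outside the range; once this is done, Proposition~\ref{K3.7} delivers $\prince$.
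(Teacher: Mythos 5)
Your handling of $(1)\Rightarrow(2)$ (cite Lemma~\ref{inverse}) and $(2)\Rightarrow(3)$ (specialization to $2^\setN$) matches the paper. But for $(3)\Rightarrow(1)$ you have only a plan, and the step you yourself flag as ``the hard part'' --- locating a forced discontinuity that cannot be masked by $I$'s freedom on multi-element preimages and on points outside the range --- is precisely the content of the proof, and it is missing. As stated, the proposal does not establish the reversal.

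The idea that closes this gap in the paper is to target $(\llpomin)$ rather than $R_\lpo$, so that the ``don't care'' cases of $I$ line up exactly with the ``don't care'' cases of the realizer being built; one then cites Proposition~\ref{llpoimpliesE2} instead of arguing discontinuity directly via Proposition~\ref{K3.7}. Concretely, from an $\llpomin$-instance $w\colon\setN\to 2$ one builds a uniformly continuous $f\colon 2^\setN\to 2^\setN$ (with modulus $h(n)=n$, uniformly in $w$) such that $\vec 0$ is \emph{always} in the range of $f$ --- so $I$'s unconstrained behavior off the range never enters --- and such that $f^{-1}(\vec 0)$ is the singleton $\{\vec 0\}$ when the first zero of $w$ occurs at an even position, the singleton $\{\vec 1\}$ when it occurs at an odd position, and $\{\vec 0,\vec 1\}$ when $w$ has no zero. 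In the first two cases $I(f,h)(\vec 0)$ is forced, and its first bit (read off a sufficiently late term of the rapidly converging code) computes the parity; in the third case $I$ may return either point, but $\llpomin$ imposes no requirement there, so any answer is acceptable. Your reduction through $R_\lpo$ cannot be patched this way, because an $\lpo$-realizer must be correct on \emph{every} input, leaving no slack to absorb $I$'s ambiguity; this mismatch is why your plan stalls where it does. If you want to salvage your draft, replace the target $R_\lpo$ by $R_{\llpomin}$ and supply the explicit construction of $f$ from $w$ with the preimage behavior described above.
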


\begin{proof}
We will work in $\rcaw$.  By Lemma~\ref{inverse}, item (1) implies item (2).  Item~(3) is a special case of item (2),
so we need only show that item (3) implies item (1).  By Proposition \ref{llpoimpliesE2}, it suffices to show that (3) implies
$(\llpo )$.

Given an input $w\colon \setN \to 2$ for $\llpomin$, we will show how to construct a function $f$ with modulus of uniform
continuity $h$ such that information about the pre-image of $f$ as provided by $I(f,h)$ in item (3) can be used
to calculate $\llpomin$ for  $w$.  In particular, we will control the pre-image of the constant $0$ function, denoted $\vec 0 \in 2^\setN$.
If the first $t$ where $w(t)=0$ is even, we require $f^{-1} (\vec 0 ) = \{ \vec 0 \}$. If the first $t$ such that
$w(t)=0$ is odd, we require $f^{-1} (\vec 0 ) = \{ \vec 1 \}$.  If $0$ is not in the range of $w$, $f^{-1} (\vec 0 )$ will
be the set $\{ \vec 0 , \vec 1 \}$.

Now we can specify the behavior of $f$.  Let $x \colon \setN \to 2$ by and element of $2^\setN$.  Evaluating $f$ at $x$ yields
a function $f(x)$, which also maps $\setN$ into $2$ and is defined as follows.
\begin{enumerate}
\item  $f(x)(0)=0$.
\item  For $n>0$, $f(x)(n)$ is defined by two cases:
\begin{enumerate}
\item   if $n-1$ is not the least $t$ such that $w(t)=0$ then
\[
f(x)(n)=
\begin{cases}
x(n), & {\text {if}~}x(0)=0, \\
1-x(n), &{\text {if}~}x(0)=1.
\end{cases}
\]
\item  if $n-1$ is the least $t$ such that $w(t)=0$ then
\[
f(x)(n)=
\begin{cases}
x(0), & {\text {if}~}n-1\text{~is even}, \\
1-x(0), &{\text {if}~}n-1\text{~is odd}.
\end{cases}
\]
\end{enumerate}
\end{enumerate}

Routine arguments verify that the pre-image of $f$ satisfies the requirements listed above.  Also, if the sequences
$x$ and $y$ agree in the first $n$ values, the sequences $f(x)$ and $f(y)$ also agree in the first $n$ values.
Thus, the function $h(n)=n$ is a modulus of uniform continuity for $f$.  The construction of $f$ from $w$ is
sufficiently uniform that $\rcaw$ proves the existence of a function $g$ mapping each $w \in 2^\setN$ to
its associated function $f$.

Let $I$ be the function described in item (3) of the statement of the lemma, and let $h(n)=n$ be the identity
function on $\setN$.  The for any $w \in 2^\setN$, $I(g(w),h) (\vec 0 )$ is (an element of $2^\setN$) equal
to $\vec 0$ if the first $0$ in the range of $w$ occurs at an even value, and $\vec 1$ if the first $0$
occurs at an odd value.  The sequences coding elements of $2^\setN$ output by $I$ are rapidly
converging sequences of finite approximations to $\vec 0$ or $\vec 1$.  By the definition of the metric
on $2^\setN$, the first entry in the third finite approximation for any sequence equal to $\vec 0$ will
be $0$, and similarly the value $1$ can be extracted from any sequence equal to $\vec 1$.  Thus
$I(g(w) , h)(0)$ uniformly calculates $\llpomin$ for $w$.
\end{proof}

The previous results allow us to formulate and analyze
some restrictions of Banach's theorem.
For compact complete separable metric spaces, a functional form of
Banach's theorem restricted to uniformly continuous functions
is equivalent to the functional existence principle
$\prince$.  Note that if $f$ and $g$ have moduli of uniform continuity
$h_f$ and $h_g$, then $h$ defined by $h(n)=\max \{h_f(n), h_g(n) \}$
is a modulus of uniform continuity for both $f$ and $g$.  As a notational
convenience, we will use common moduli of uniformity for pairs of functions.

\begin{definition}
For a complete separable metric space $X$, a Banach functional $B_X$ is defined as follows.
For injective functions $f\colon X \to X$ and $g\colon X \to X$ with a common modulus of uniformity $h$,
$B_X (f, g, m )$ is a bijective function $H\colon X \to X$ such that for all $x \in X$,
$H(x) = f(x)$ or $g(H(x)) = x$.  The parenthesized expression $\princb{X}$ denotes the principle asserting the
existence of a Banach functional for $X$.
\end{definition}

Following our previous pattern, the next result proves a version of Banach's theorem for compact
metric spaces using $\prince$.  The reversals and  a summary appear in a second result.

\begin{lemma}[$\rcaw + \prince$]\label{Bmetriclemma}
If $X$ is a compact metric space, then $\princb{X}$.
\end{lemma}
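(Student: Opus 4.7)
The plan is to formalize the classical Banach decomposition argument, using the higher-order tools developed in Lemmas~\ref{0628A} and~\ref{inverse} to perform range-membership tests and to take backward preimages uniformly in $(f,g,h)$. First, I would apply Lemma~\ref{0628A} to obtain characteristic functionals $\chi_f = R(f,h)$ and $\chi_g = R(g,h)$ for the ranges of $f$ and $g$, and Lemma~\ref{inverse} to obtain inverse-selection functionals $I_f = I(f,h)$ and $I_g = I(g,h)$ that return genuine preimages when the respective range conditions hold. All of these exist in $\rcaw + \prince$.

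Second, for each $x \in X$ I would build the backward chain $x_0 = x$, with $x_{2k+1} = I_g(x_{2k})$ provided $\chi_g(x_{2k}) = 1$ and $x_{2k+2} = I_f(x_{2k+1})$ provided $\chi_f(x_{2k+1}) = 1$, together with an indicator $c_x \colon \setN \to 2$ where $c_x(n) = 0$ iff the chain is defined through step $n$ but cannot be extended to step $n+1$. The map $x \mapsto c_x$ is primitive recursive in the previous ingredients and hence exists in $\rcaw$. Using Feferman's $\mu$ functional and the $\exists^2$ realizer supplied by $\prince$ (Proposition~\ref{K3.9}), test whether $c_x$ has a zero and, if so, compute its least zero $n(x)$. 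Set
\[
p(x) = \begin{cases} 1, & \text{if } n(x) \text{ exists and is odd,} \\ 0, & \text{otherwise (even exit or infinite chain),} \end{cases}
\]
and define $H(x) = f(x)$ when $p(x) = 0$ and $H(x) = I_g(x)$ when $p(x) = 1$. Because $p(x) = 1$ forces $\chi_g(x) = 1$, the identity $g(H(x)) = x$ holds in that case, so $H(x) = f(x)$ or $g(H(x)) = x$ holds for every $x$ by construction.

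Third, I would verify that $H$ is a bijection. Injectivity splits into three cases: if $H(x_1) = H(x_2)$ is produced by the $f$-branch for both inputs, injectivity of $f$ forces $x_1 = x_2$; if both come from the $I_g$-branch, applying $g$ to both sides and using $g \circ I_g = \mathrm{id}$ on $\mathrm{range}(g)$ forces $x_1 = x_2$; in the mixed case $f(x_1) = I_g(x_2)$, the chain from $x_2$ is precisely the chain from $x_1$ preceded by the two elements $x_2$ and $f(x_1)$, so the two exit steps differ by~$2$ and carry the same parity, contradicting $p(x_1) \neq p(x_2)$. Surjectivity is dual: given $y$, the candidate preimages are $g(y)$ and, when $\chi_f(y) = 1$, also $I_f(y)$; their chains differ by two steps, so a parity check on the exit step (with the degenerate case $\chi_f(y) = 0$ giving $g(y)$ an exit at step~$1$) identifies the unique preimage. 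The principal obstacle is higher-order bookkeeping rather than combinatorics: one must arrange for the sequence $\langle x_n \rangle$, the test function $c_x$, and the case split between terminating and infinite chains all to be realized as functionals in $\rcaw + \prince$. Granted the ingredients from Lemmas~\ref{0628A} and~\ref{inverse}, every additional step is primitive recursive together with a single application of $\mu$, so $\lambda$-abstraction packages the construction as a functional $B_X(f,g,h)$ witnessing $\princb{X}$.
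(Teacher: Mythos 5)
Your proposal is correct and follows essentially the same route as the paper: it assembles the range functionals of Lemma~\ref{0628A} and the preimage selectors of Lemma~\ref{inverse}, runs the totalized back-and-forth chain with an indicator sequence tracking where the classical construction would halt, applies $\exists^2$ (via $\mu$, where the paper uses $R_\llpomin$ for the parity) to decide termination and its parity, and defines $H$ by the same case split, with the same chain-decomposition argument for bijectivity.
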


\begin{proof}
Assume $\rcaw$ and
$\prince$.  Suppose $X$ is a complete separable metric space and that
$\langle\langle x_{ij} : i \le n_j \rangle : j \in \setN\rangle$ witnesses that $X$ is compact.
Let $f$ and $g$ be injections of $X$ into $X$ with a common modulus of uniform continuity $h$.
Apply Lemma \ref{0628A} to find the range functionals $R(f,h)$ and $R(g,h)$.  Apply Lemma \ref{inverse} to
to find pre-image selectors $I(f,h)$ and $I(g,h)$.  Because $f$ and $g$ are injections, the restrictions of
these functions to the ranges of $f$ and $g$ are inverse functions.  Consequently, we will use the shorthand
notation $f^{-1}$ and $g^{-1}$.  Note that the pre-image selectors $f^{-1}$ and $g^{-1}$ are defined for all
inputs from $X$, and that if (for example) $y$ is in the range of $f$, then $f(f^{-1}(y)) = y$.  Our goal is
to construct the bijection $H$ in the statement of $\princb{X}$.
This is achieved by a back-and-forth construction, alternately iterating applications of $g^{-1}$ and $f^{-1}$,
and basing the value of $H$ on the terminating condition of this process.

First we construct a functional that alternately applies $g^{-1}$ and $f^{-1}$.  Using primitive recursion,
define $S(x,n)$ by $S(x,0) = x$ and for $n\ge 0$, $S(x,2n+1) = g^{-1} (S(x,2n))$ and
$S(x,2n+2) = f^{-1}(S(x,2n+1))$.  Calculating a few values yields
$S(x,0)=x$, $S(x,1) = g^{-1}(x)$, $S(x,2)=f^{-1}(g^{-1}(x))$, and $S(x,3)=g^{-1}(f^{-1}(g^{-1}(x)))$.  As noted above,
$f^{-1}$ and $g^{-1}$ are total, so $S(x,n)$ is defined for all $x$ and all $n$.

A traditional back-and-forth construction using partial inverse functions might halt if, for example,
$x$ was not in the range of $g$, or if $g^{-1}(x)$ was not in the range of $f$, and so on.
Define the function $P\colon X \times \setN \to \{0,1\}$ by $P(x,0)=1$ and for $n\ge 0$,
$P (x,2n+1) = R(g,h)(S(x,2n))$ and $P(x,2n+2) = R(f,h)(S(x,2n+1))$. Consider the initial stages
of the back-and-forth process displayed in the following table.

\begin{center}
\begin{tabular}{ l | l | l | l | l | l}
$n$=stage &0 &1 &2 & 3&$\dots$\\
\hline
$S(x,n)$ & $x$ & $g^{-1}(x)$ & $f^{-1}(g^{-1}(x))$ & $g^{-1}(f^{-1}(g^{-1}(x)))$ & $\dots$
\end{tabular}
\end{center}

\noindent
If $x$ is not in the range of $g$ then $P(x,1)=0$.  If $g^{-1}(x)$ is not in the range of $f$, then
$P(x,2)=0$.  In general, the least $n$ with $P(x,n)=0$ will be the stage where the traditional
back-and-forth process based on partial inverse functions will halt.
If the back-and-forth process does not halt, then $P(x,n)=1$ for all $n$.
Writing $P_x(n)$ for $P(x,n)$, we may view $P_x (n)$ as a function from
$\setN$ into $\{0,1\}$.  Apply $\phi$ as provided by $\prince$, and we have
$\phi(P_x (n)) = 0$ if the back-and-forth process halts and $\phi(P_x (n)) = 1$
if the process never terminates.

By $\prince$ and Theorem \ref{E2LLPO}, we may also use $R_\llpomin$, a realizer for $\llpomin$.  Define the functional
$T(x)$ by
\[
T(x) =
\begin{cases}
1, & \text{if~}\phi (P_x(n))=1,\\
R_\llpomin (P_x), &\text{if~}\phi (P_x(n))=0.
\end{cases}
\]
Finally, define the bijection $H(x)$ by
\[
H(x)=
\begin{cases}
f(x), &\text{if~}T(x)=1\text{,~and}\\
g^{-1}(x), &\text{if~}T(x)=0.
\end{cases}
\]
One can argue that $H$ is the desired bijection by the usual arguments.  Briefly, consider the
following diagram representing images and pre-images of an element $x$ from $X$.

 \begin{figure}[H]
 \begin{center}
 \tikzpicture
  [mymatrix/.style={
    matrix of nodes,
    column sep=.3cm,
    row sep=1cm,
    row 2/.style={minimum height=.1cm}}
  ]
  \matrix[mymatrix] (m) {
    $ $&&$g^{-1}(x)$&&$f(x)$&&$f(g(f(x))$& \\
    &$f^{-1}(g^{-1}(x))$&&$\quad x \quad$&&$g(f(x))$&&$ $ \\
  };
  \draw[dashed,-stealth](m-1-1)--(m-2-2);
  \draw[-stealth](m-2-2)--(m-1-3);
  \draw[-stealth](m-1-3)--(m-2-4);
  \draw[-stealth](m-2-4)--(m-1-5);
  \draw[-stealth](m-1-5)--(m-2-6);
  \draw[-stealth](m-2-6)--(m-1-7);
  \draw[dashed,-stealth](m-1-7)--(m-2-8);
\endtikzpicture
\end{center}
 \end{figure}

\noindent
Each element of the lower copy of $X$ appears in at least one bipartite subgraph of the sort pictured.
Also, for each $y$ in the upper copy of $X$, we know $y = g^{-1}(g(y))$, so each element in the
upper copy of $X$ appears in at least one bipartite subgraph.  Because $f$ and $g$ are injective,
each element appears in exactly one bipartite subgraph.  The choice of the values of $H(x)$ ensure
that if the bipartite graph terminates on the left, the left most vertex is either in the lower copy of $X$
and in the domain of $H$, or in the upper copy of $X$ and in the range of $H$.  Thus $H$ is a bijection
of $X$ into itself, satisfying the requirements of $\princb{X}$.
\end{proof}

This section concludes with proofs of two reversals for instances of the previous lemma, summarizing
the results for Banach's theorem on compact metric spaces in the following theorem.

\begin{theorem}[$\rcaw$]\label{Bmetric}
The following are equivalent:
\begin{enumerate}
\item  $\prince$\label{BmetricA}.
\item  If $X$ is a compact metric space, then $\princb{X}$.\label{BmetricB}
\item  $\princb{[0,1]}$\label{BmetricC}.
\item  $\princb{2^\setN}$\label{BmetricD}.
\end{enumerate}
\end{theorem}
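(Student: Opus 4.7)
The implication (1)$\Rightarrow$(2) is exactly Lemma~\ref{Bmetriclemma}, and (2) immediately yields both (3) and (4) because $[0,1]$ and $2^\setN$ are compact complete separable metric spaces. The real work lies in the two reversals (3)$\Rightarrow$(1) and (4)$\Rightarrow$(1). In each case, by Proposition~\ref{llpoimpliesE2}, it suffices to derive $(\llpomin)$ from the given Banach principle. Mirroring Proposition~\ref{B10}, the strategy is: given an $\llpomin$ instance $w \in 2^\setN$, build uniformly in $w$ a pair of uniformly continuous injections $F_w, G_w$ on the target space together with a common modulus of uniform continuity, so that any Banach bijection produced by the hypothesized functional, evaluated at a distinguished point, encodes the parity of the least zero of $w$.

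For (4)$\Rightarrow$(1), I would exploit the clopen partition $2^\setN = \{\vec 0\} \cup \bigsqcup_{n \in \setN} C_n$, where $C_n$ consists of those sequences beginning with $n$ zeros followed by a one. Each $C_n$ is homeomorphic to $2^\setN$ via the map $\phi_n$ that prepends $n$ zeros and a one to a given sequence. Using the $\setN$-injections $f_0, f_1$ constructed from $w$ in the proof of Proposition~\ref{B10}, I would set $F_w|_{C_n} = \phi_{f_0(n)}\circ \phi_n^{-1}$ and $G_w|_{C_n} = \phi_{f_1(n)}\circ \phi_n^{-1}$, with $F_w(\vec 0) = G_w(\vec 0) = \vec 0$. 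Since $|f_i(n)-n|\le 2$, these are injections, uniformly continuous with a common modulus such as $h(k)=k+2$, and any Banach bijection $H$ for $(F_w, G_w, h)$ must act on $C_1$ exactly as dictated by the location of the first zero of $w$; evaluating $H$ at a designated point of $C_1$ therefore recovers $\llpomin(w)$.

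For (3)$\Rightarrow$(1), I would port the same construction onto $[0,1]$ by embedding $2^\setN$ as the standard middle-thirds Cantor set $K \subseteq [0,1]$ and extending $F_w, G_w$ to the gaps of $K$ by affine interpolation between the images of the interval endpoints. Chosen with care (so that the underlying shifts $\phi_{f_i(n)}\circ \phi_n^{-1}$ are order-preserving on $K$), the extensions are strictly monotone on $[0,1]$, hence injective, and retain a computable common modulus. Restricting the Banach bijection supplied by $\princb{[0,1]}$ back to $K$ then reproduces a bijection of the $2^\setN$ system satisfying the analogue of Banach's condition, and the combinatorial forcing on the subinterval corresponding to $C_1$ recovers $\llpomin(w)$ as before.

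The main obstacle will be the $[0,1]$ reversal: continuous injections on a connected space are monotone, so one has much less freedom than on $2^\setN$, and care is needed to match the combinatorial pattern of $f_0$ and $f_1$ with monotone affine extensions across the gaps of $K$ without destroying injectivity or the forcing at the distinguished point. The $2^\setN$ reversal, by contrast, is essentially a continuous repackaging of Proposition~\ref{B10}.
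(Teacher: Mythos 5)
Your treatment of (1)$\Rightarrow$(2)$\Rightarrow$(3),(4) matches the paper. For the reversals, however, you take a genuinely different route from the paper, and the $[0,1]$ half of your route has a real gap. The paper does not construct injections depending on an $\llpomin$-instance at all. Instead it fixes a single simple pair of injections --- $f(x)=g(x)=x/2$ on $[0,1]$, and $f=g=$ the padding map $P$ on $2^\setN$ --- and shows that any Banach bijection $H$ for that pair must be sequentially discontinuous at one specific point ($x=\tfrac12$, where $H$ is forced to take the value $1$ because $1$ lies outside the range of $f$, while $H(x_n)\to\tfrac14$ along $x_n=\tfrac12+2^{-n}$; similarly at $\langle 1,0,1,0,\dots\rangle$ in Cantor space). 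Proposition~\ref{K3.7} then yields $\prince$ directly from the existence of a discontinuous functional, with no detour through $(\llpomin)$ or Proposition~\ref{llpoimpliesE2}. Your $2^\setN$ argument, transplanting the injections of Proposition~\ref{B10} across the clopen pieces $C_n$, is workable: because $F_w$ and $G_w$ act ``diagonally'' (preserving the $2^\setN$-coordinate $z$ in $\bigsqcup_n C_n\cong\setN\times 2^\setN$), the chain of each point $\phi_n(z)$ is isomorphic to the chain of $n$ under $f_0,f_1$, and the pointwise forcing of Proposition~\ref{B10} goes through. It is just considerably heavier than the paper's two-line discontinuity argument, and you should spell out that pointwise chain analysis, since $H$ is not assumed to respect the blocks $C_n$.

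The gap is in (3)$\Rightarrow$(1). On a connected space the combinatorial forcing you rely on breaks down. A continuous injection of $[0,1]$ into itself is monotone, so its range is a full subinterval; the complement of the range consists of at most two intervals at the ends. Concretely, in your affine-interpolation extension, whenever the instance $w$ causes $f_1$ to map two adjacent pieces $P_n, P_{n'}$ of the Cantor set to non-adjacent pieces (which is exactly what happens at the transition point in cases (b) and (c) of the construction in Proposition~\ref{B10}), the interpolated image of the gap between $P_n$ and $P_{n'}$ sweeps across every piece lying between $P_{f_1(n)}$ and $P_{f_1(n')}$. Those pieces are then contained in the range of $G_w$ after all, with preimages that are gap points rather than Cantor-set points. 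The step ``$m\notin\operatorname{ran}(f_1)$, hence $\phi_m(z)\notin\operatorname{ran}(G_w)$, hence $H(\phi_m(z))=F_w(\phi_m(z))$'' --- the anchor of the whole forcing --- therefore fails, and $H$ need not map $K$ into $K$ in the way you assume. This is not a matter of choosing the embedding more carefully; it is the intermediate value theorem defeating the construction. The fix is to abandon the realizer-extraction strategy on $[0,1]$ and argue as the paper does: exhibit a forced discontinuity of $H$ at the boundary of the common range of $f$ and $g$ and apply Proposition~\ref{K3.7}.
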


\begin{proof}
The previous lemma proves that item (\ref{BmetricA}) implies item (\ref{BmetricB}).  
Item (\ref{BmetricC}) and item (\ref{BmetricD}) are special cases of item~(\ref{BmetricB}),
so we can complete the proof by reversing (\ref{BmetricC}) and (\ref{BmetricD}) to (\ref{BmetricA}).
For the first reversal, suppose $B_{[0,1]}(f,g,h)$ is the Banach functional for $[0,1]$.
Consider the injections $f$ and $g$ defined by $f(x)=g(x)=x/2$.  Each $x$ in $[0,1]$ is
represented by a
rapidly converging sequence of rationals, and dividing each element of the sequence by $2$ yields
a rapidly converging sequence representing $x/2$.   Thus $\rcaw$ proves that $f$ and $g$ are defined
and total.  The identity function $h(k)=k$ is a modulus of uniform continuity for $f$ and $g$.
Suppose $H=B_{[0,1]}(f,g,h)$ is the bijection satisfying Banach's theorem for $f$ and $g$.
Consider $x= \frac{1}{2}$ and the sequence $x_n = \frac{1}{2} + \frac{1}{2^n}$.  For each $n$,
$x_n$ is not in the range of $g$,  so $H(x_n) = f(x_n ) = \frac{x_n}{2} = \frac{1}{4} + \frac{1}{2^{n+1}}$.
Thus, $\lim_{n \to \infty} H(x_n) = \frac{1}{4}$.  The functional $H$ is bijective, so $1$ is in the range of $H$.
Fix $x$ with $H(x)=1$.  By the Banach theorem, $H(x)=f(x)$ or $H(x) = g^{-1}(x)$.  Because $1$ is not in the
range of $F$, $H(x) = g^{-1} (x)$.  Thus $1 = g^{-1} (x)$, so $x= \frac{1}{2}$ and $H(\frac{1}{2}) = 1$.
Summarizing,
\[
H(\lim_{n \to \infty} x_n ) = H (\frac{1}{2}) = 1 \neq \frac{1}{4} = \lim _{n \to \infty} H (x_n ).
\]
Thus $H$ is not sequentially continuous at $x = \frac{1}{2}$, and
$\prince$ follows by Proposition~\ref{K3.7}.

For the final reversal, suppose $B_{2^\setN}(f,g,h)$ is the Banach functional for Cantor space.
Consider the padding function $P(x)$ that adds a zero after each entry in a binary input string.
Formally, $P(x)(n) = x(m)$ if $n=2m$, and $P(x)(n) = 0$ otherwise.  For example,
\[
P(\langle 1,0,1,1\dots \rangle ) = \langle 1, 0,0 ,0, 1,0,1,0\dots\rangle.
\]
$\rcaw$ proves that $P(x)$ is defined and total, and that the identity function $h(k)=k$ is
a modulus of uniform continuity for $P(x)$.  Let $f(x)$ and $g(x)$ both be $P(x)$.
Let $H = B_{2^\setN}(f,g,h)$ be the bijection satisfying Banach's theorem for $f$ and $g$.
For each $n$, let $\sigma_n$ consist of $n$ copies of the string $10$, followed by $11$, followed by zeros.
The double 1 ensures that $\sigma_n$ is not in the range of $g(x)=P(x)$.  Thus, for each $n$,
$H(\sigma_n ) = f(\sigma_n ) = P(\sigma_ n)$, which consists of $n$ copies of the string $1000$ followed
by $1010$, followed by zeros.  Thus $\lim_{n\to \infty} H(\sigma_n )$ is the string $1000$ repeated infinitely.
On the other hand, $\lim_{n \to \infty} \sigma_n $ is $\langle 1, 0, 1, 0  \dots \rangle$.  The string
$\langle 1, 1, 1 \dots \rangle$ is not in the range of $f(x)$, so $H(g(\langle 1, 1, 1 \dots \rangle))=\langle 1, 1, 1 \dots \rangle$.
Because $g(\langle 1, 1, 1  \dots \rangle)=\langle 1, 0, 1, 0   \dots \rangle$, we have
$H(\lim_{n\to \infty} \sigma_n ) = H( \langle 1, 0 ,1, 0  \dots \rangle) = \langle 1, 1, 1  \dots \rangle$.
Thus $H(\lim_{n\to \infty} \sigma_n ) \neq \lim_{n \to \infty} H(\sigma_n)$, so $H$ is not
sequentially continuous at $x = \langle 1,0,1,0   \dots\rangle$.  The principle
$\prince$ follows by Proposition~\ref{K3.7}, completing the reversal and the proof of the theorem.
\end{proof}

We note that the functional $R$ in Lemma~\ref{0628A}, the functional $I$ in Lemma~\ref{inversereversal}, and the functional $B$ in Theorem~\ref{Bmetric} are constructed uniformly in a code for the space $X$. Hence these functionals could, in principle, be defined with $X$ as a parameter. This is another layer of uniformity in the constructions, although noting the parameter explicitly complicates the notation.

\section{Moduli of uniform continuity}\label{sec:moduli}

This section introduces a function that computes moduli of uniform continuity.  As shown below, the strength of the existence of the function lies below $\prince$, allowing us to streamline the definition of Banach functionals and Theorem~\ref{Bmetric}.

\begin{definition}
Suppose $X$ is a compact complete separable metric space and $Y$ is a complete separable metric
space.  The principle $( \sf M )$ asserts the existence of a function $M$ such that if $f\colon X \to Y$ is continuous,
then $M(f)$ is a modulus of uniform continuity for $f$.
\end{definition}

Near the end of his article, Kohlenbach~\cite{koh05} presents a functional form of the fan theorem, denoted
by $({\sf MUC})$.  He notes that $( \sf M )$ is a consequence of $(\sf{MUC} )$, $\sf{MUC}$ is conservative
over $\WKLo$ for second order sentences, and $(\sf{MUC})$ is inconsistent with $\prince$.
Because $(\sf{MUC})$ proves $(\sf M )$, $(\sf M )$ is also conservative over $\WKLo$ for second order sentences.
The next lemma shows
that unlike $( \sf{MUC} )$, the principle $(\sf M )$ is a consequence of $\prince$.

\begin{lemma}[$\rcaw$]\label{7B1}
$\prince$ implies $(\sf M )$.
\end{lemma}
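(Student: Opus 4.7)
The plan is to work in $\rcaw + \prince$, obtain Feferman's $\mu$ via Proposition~\ref{K3.9}, and then use $\mu$ to search, for each continuous $f \colon X \to Y$ and each $k$, for the least level $m$ of the compactness witness $\langle\langle x_{ij} : i \le n_j\rangle : j \in \setN\rangle$ that certifies uniform control of $f$ to within $2^{-(k+2)}$ across the level-$m$ grid and all finer grids. The certificate is the condition
\[
C(f,k,m)\colon\ (\forall \ell \ge m)(\forall i \le n_m)(\forall i' \le n_\ell)\bigl[ d(x_{im}, x_{i'\ell}) < 2^{-(m-2)} \to d(f(x_{im}), f(x_{i'\ell})) < 2^{-(k+2)} \bigr].
\]
Because distances in $\hat A$ and $\hat B$ are represented by rapidly converging sequences of rationals, each strict inequality in $C$ is $\Sigma^0_1$ in the type-1 parameters $f$ and the witness, so $C(f,k,m)$ is arithmetic in $m$; $\mu$ can both decide it and locate the least $m$ satisfying it.

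Existence of some witnessing $m$ reduces to the classical Heine--Cantor theorem. Proposition~\ref{E2WKL} gives $(\WKL)$, hence $\WKLo$, in which continuous functions on compact complete separable metric spaces are uniformly continuous (see Simpson~\cite{sim09}*{IV.2}). Thus $f$ admits some modulus $h$, and any $m \ge h(k+2) + 2$ satisfies $C(f,k,m)$, so the search terminates. Define $M(f)(k) := m(f,k) + 1$, where $m(f,k)$ is the least witness returned by $\mu$.

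To verify $M(f)$ is a genuine modulus, take $x,y \in X$ with $d(x,y) < 2^{-(m+1)}$ where $m = m(f,k)$. Use the compactness witness at level $m$ to pick $x_{im}, x_{jm}$ with $d(x, x_{im}), d(y, x_{jm}) < 2^{-m}$; a triangle-inequality computation gives $d(x_{im}, x_{jm}) < 5 \cdot 2^{-(m+1)} < 2^{-(m-2)}$, so $C$ (with $\ell = m$) yields $d(f(x_{im}), f(x_{jm})) < 2^{-(k+2)}$. For each $\ell \ge m$, use $\mu$ to select the least $i_\ell \le n_\ell$ with $d(x, x_{i_\ell,\ell}) < 2^{-\ell}$, guaranteed by compactness; then $d(x_{im}, x_{i_\ell,\ell}) < 2^{-m} + 2^{-\ell} \le 2^{-(m-1)} < 2^{-(m-2)}$, so $C$ gives $d(f(x_{im}), f(x_{i_\ell,\ell})) < 2^{-(k+2)}$. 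Sequential continuity of $f$ (Proposition~3.6 of Kohlenbach~\cite{koh05}, provable in $\rcaw$) passes to the limit: $d(f(x_{im}), f(x)) \le 2^{-(k+2)}$, and similarly $d(f(x_{jm}), f(y)) \le 2^{-(k+2)}$. Three applications of the triangle inequality then yield $d(f(x), f(y)) \le 3 \cdot 2^{-(k+2)} < 2^{-k}$.

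The main obstacle is the bookkeeping that lets a single grid-level certificate extend to a uniform bound on all of $X$. The ``$\forall \ell \ge m$'' quantifier inside $C$ is essential: it forces uniform control along every finer grid, so a limit argument via sequential continuity transfers the grid estimate to arbitrary points. Choosing the radius $2^{-(m-2)}$ inside $C$ provides just enough slack to absorb the triangle-inequality calculations while keeping the final bound strictly under $2^{-k}$; the entire search is carried out by a single application of $\mu$ to an arithmetic predicate, so no principle stronger than $\prince$ is invoked.
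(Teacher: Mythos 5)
Your proposal follows essentially the same strategy as the paper's proof: use $\prince$ (via $\exists^2$ and Feferman's $\mu$) to search the countable compactness witness $\langle\langle x_{ij} : i \le n_j\rangle : j\in\setN\rangle$ for a finite ``certificate'' of uniform control of $f$, and then transfer that certificate to arbitrary points of $X$ by density, continuity, and the triangle inequality. The differences are cosmetic: the paper indexes its certificate by a distance threshold $2^{-n}$ ranging over all pairs of witness points and transfers it using $\varepsilon$-$\delta$ continuity at $u$ and $v$, while you index by a grid level $m$ and transfer by taking limits along finer grids using sequential continuity; your arithmetic bookkeeping ($5\cdot 2^{-(m+1)} < 2^{-(m-2)}$, the $3\cdot 2^{-(k+2)} < 2^{-k}$ endgame) checks out.

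The one step whose justification does not go through as written is the termination argument. You invoke the $\WKLo$ theorem that continuous functions on compact complete separable metric spaces are uniformly continuous (Simpson IV.2), but that theorem concerns continuous functions given by second-order codes in the sense of Definition II.6.1, whereas the $f$ in $(\mathsf{M})$ is a third-order functional whose continuity is Kohlenbach's sequential or $\varepsilon$-$\delta$ continuity; no code for $f$ is available without further work, so the existence of ``some modulus $h$'' cannot be read off from that citation. (This distinction between coded and genuinely third-order functions is exactly the point of Lemma~\ref{7E1} and Proposition~\ref{7F1}.) The repair is routine and in the spirit of the paper's Lemma~\ref{0628A}: if $C(f,k,m)$ failed for every $m$, one would extract, using $\exists^2$, a sequence of witness pairs at distance less than $2^{-(m-2)}$ whose images stay $2^{-(k+2)}$ apart, apply the Bolzano--Weierstrass theorem for compact metric spaces (Theorem III.2.7 of Simpson, available from $\prince$) to pass to a common limit point, and contradict sequential continuity of $f$ there. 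A smaller quibble: deciding your $\Pi^0_2$ predicate $C(f,k,m)$ takes nested applications of $\mu$ (or one use of $\exists^2$ to form its characteristic function followed by $\mu$), not literally ``a single application of $\mu$''; this is harmless since $\exists^2$ yields arithmetical comprehension in the type-1 data.
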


\begin{proof}
Let $X$ be a compact complete separable metric space with compactness witnessed by the sequence of sequences
$\langle \langle x_{ij} : i \le n_j \rangle : j \in \setN \rangle$.  Let $Y$ be a comple separable metric space.
We will use $d$ to denote the metric in both spaces.
For $f\colon X \to Y$ we can define a prospective value of a modulus of uniform continuity for $f$ at $m$ by setting
$(M(f))(m)$ equal to the least $n$ such that:
\begin{equation}\label{modulus}
(\forall x_{ij})( \forall x_{i^\prime j^\prime })
[ d(x_{ij} , x_{i^\prime j^\prime } ) < 2^{-n} \to
d(f(x_{ij} ), f(x_{i^\prime j^\prime }))<2^{-m-1} ]
\end{equation}
Informally, $M(f)$ is a function from $\setN$ to $\setN$ that resembles a modulus of uniform continuity on
the compactness witnesses for $X$.  First we will show that $\rcaw + \prince$ suffices to prove the existence
of the function $M$.  Then we will verify that if $f$ is continuous, then $M(f)$ is a modulus of uniform
continuity for $f$.

Working in $\rcaw + \prince$, let $X$ and $Y$ be as above, and suppose $f\colon X \to Y$.
Recalling the reverse mathematical formalization of inequalities in the reals,
the formulas
$d(x_{ij} , x_{i^\prime j^\prime })<2^{-n}$ and
$d(f(x_{ij}),f(x_{i^\prime j^\prime }))>2^{-m-1}$ are
$\Sigma^0_1$.  Thus $\rcaw$ proves the existence of a function
$a(f,m,n,t)$ which is $0$ if $t$ codes a witness that there are $x_{ij}$ and $x_{i^\prime j^\prime }$
such that $d(x_{ij} , x_{i^\prime j^\prime })<2^{-n}$ and
$d(f(x_{ij}),f(x_{i^\prime j^\prime }))>2^{-m-1}$, and is $1$ otherwise.  Note that formula (\ref{modulus}) holds
if $a(f,m,n,t)$ is $1$ for all $t$, and fails if there is a $t$ such that $a(f,m,n,t)$ is $0$.
As noted in section \ref{sec:realize}, $\prince$ implies the existence of the function $R_\lpo$.
The $\lambda$ notation $\lambda t . a(f,m,n,t)$ denotes the function that maps each $t\in \setN$
to the value $a(f,m,n,t)$.  Applying $\lambda$ abstraction (which is a consequence of $\rcaw$~\cite{koh05})
and $\prince$, we can prove the existence of the function
$b(f,m,n) = R_\lpo (\lambda t . a(f,m,n,t ))$.  Note that
for all $f$, $m$, and $n$, $b(f,m,n)=1$ if formula (\ref{modulus}) holds and
$b(f,m,n) = 0$ otherwise.  By Proposition \ref{K3.9}, $\prince$ proves the
existence of Feferman's $\mu$, so by $\prince$ and an additional application of $\lambda$ abstraction,
we can prove the existence of the function
$c(f,m) = \mu ( 1 - \lambda n. b(f,m,n))$.  Note that for each $f$ and $m$, if there is an $n$ such that
formula (\ref{modulus}) holds, then $c(f,m)$ is the least such $n$.  If there is no such $n$, for example if $f$
is discontinuous, then $c(f,m)$ still yields some value, but no useful information.
By $\lambda$ abstraction, $\rcaw + \prince$ proves the existence of
$M(f) = \lambda m . c(f,m)$.  For every $f\colon X \to Y$, $M(f)$ yields a function from $\setN$ to $\setN$.

It remains to show that if $f$ is continuous then $M(f)$ is a modulus of uniform continuity for $f$.
Fix a continuous $f\colon X \to Y$ and $m \in \setN$.  Let $n=M(f)(m)$.  Suppose that $u,v\in X$ satisfy
$d(u,v) < 2^{-n}$.  Choose $\delta < 2^{-n} - d(u,v)$.  Because $f$ is continuous and
$\langle \langle x_{ij} : i \le n_j \rangle : j \in \setN \rangle$ is dense in $X$, we can find
an $x_{ij}$ such that $d(x_{ij} , u ) < \delta/2$ and
$d(f(x_{ij}) , f(u)) < 2^{-m-2}$.  Similarly, find
$x_{i^\prime j^\prime}$ such that
$d(x_{i^\prime j^\prime}, v) < \delta/2$ and
$d(f(x_{i^\prime j^\prime}),f(v))<2^{-m-2}$.
By the triangle inequality,
\[
d(x_{ij}, x_{i^\prime j^\prime} ) \le d(x_{ij}, u) + d(u,v) + d(v, x_{i^\prime j^\prime})
<\delta/2 + d(u,v) + \delta/2 < 2^{-n}.
\]
Because $d(x_{ij}, x_{i^\prime j^\prime} ) <2^{-n}$,
and because $(M(f))(m)=n$, formula (\ref{modulus}) holds, so
$d(f(x_{ij} ) , f( x_{i^\prime j^\prime} ))<2^{-m-1}$.  By the triangle inequality,
\[
\begin{aligned}
d(f(u),f(v))&< d(f(u),f(x_{ij})) + d(f(x_{ij} , f(x_{i^\prime j^\prime} ) ) + d(f(x_{i^\prime j^\prime}),f(v))\\
 &< 2^{-m-2} + 2^{-m-1} + 2^{-m-2} = 2^{-m}.
\end{aligned}
\]
Summarizing, when $f$ is continuous and $M(f)(m) = n$, if $d(u,v) < 2^{-n}$ then
$d(f(u),f(v)<2^{-m}$.  Thus $M(f)$ is a modulus of uniform continuity for $f$.
\end{proof}

The principle $(\sf M )$ allows us to reformulate Theorem \ref{Bmetric}, stripping all reference to moduli of uniform continuity.

\begin{theorem}[$\rcaw$]\label{7C1}
 The principle $\prince$ is equivalent to the statement that for every compact complete
separable metric space $X$, there is a function $B^\prime_X$ that maps each pair of injections
from $X$ to $X$ to a bijection satisfying Banach's theorem.
\end{theorem}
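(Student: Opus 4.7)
The plan is to derive this streamlined reformulation by combining Theorem \ref{Bmetric} with Lemma \ref{7B1}. The key point is that once $\prince$ gives us the modulus functional $M$ from $(\sf M)$, any separate modulus input becomes redundant: we can extract it from the injection itself.

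For the forward direction, I would assume $\prince$ and let $X$ be an arbitrary compact complete separable metric space. Lemma \ref{7B1} provides the modulus functional $M$, and Theorem \ref{Bmetric} provides a Banach functional $B_X(f,g,h)$. Given a pair of injections $f, g \colon X \to X$, I would compute $h_f = M(f)$ and $h_g = M(g)$, form the pointwise maximum $h(n) = \max\{h_f(n), h_g(n)\}$ (which is primitive recursive in $h_f, h_g$), and then set
\[
B'_X(f,g) = B_X(f, g, h).
\]
When $f$ and $g$ are continuous, $h$ is a common modulus of uniform continuity for both, so $B_X(f,g,h)$ is a valid bijection witnessing Banach's theorem. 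Packaging the composition using $\lambda$-abstraction (which $\rcaw$ supports, per Kohlenbach~\cite{koh05}) yields $B'_X$ as a single functional.

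For the reverse direction, it suffices to exhibit a single compact space $X$ for which $\princb{X}$-without-moduli implies $\prince$. I would take $X = 2^\setN$ (or $X = [0,1]$) and replay the reversal from the proof of Theorem \ref{Bmetric}. That argument uses specific injections, such as the padding function on $2^\setN$ or $f(x) = g(x) = x/2$ on $[0,1]$, whose existence is provable in $\rcaw$. Applying $B'_X$ to such a pair gives precisely the bijection $H$ used in the original reversal, and the same computation shows that $H$ fails to be sequentially continuous, so $\prince$ follows by Proposition \ref{K3.7}. Since the old reversal never actually used the modulus input (the injections just happened to admit the identity as a modulus), stripping that input from $B'_X$ causes no difficulty.

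The only subtle point is making sure the functional $B'_X$ in the forward direction is well-defined on arbitrary type-$1$ inputs, even when $f$ or $g$ is not a continuous injection. This is not a genuine obstacle, since the construction of $M(f)$ in Lemma \ref{7B1}, the maximum operation, and the Banach functional $B_X$ from Theorem \ref{Bmetric} all terminate on arbitrary inputs (with possibly meaningless output in the ill-posed case), and the theorem only asserts correctness when $f$ and $g$ are bona fide continuous injections. Consequently the combined construction is total, and the equivalence of the three clauses of Theorem \ref{Bmetric} transfers directly to give the equivalence with the modulus-free formulation.
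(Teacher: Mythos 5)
Your proposal is correct and follows essentially the same route as the paper: the forward direction composes the Banach functional $B_X$ of Theorem~\ref{Bmetric} with the modulus functional $M$ of Lemma~\ref{7B1} via $B'_X(f,g)=B_X(f,g,\max(M(f),M(g)))$, and the converse is obtained from the reversal already carried out for Theorem~\ref{Bmetric}. The paper simply states the converse as immediate, whereas you spell out that the specific injections used there never relied essentially on the modulus input; this is a harmless elaboration, not a different argument.
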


\begin{proof}
Assuming $\prince$, by Lemma \ref{7B1} we may use the function $M$ to calculate moduli of uniform
continuity for $f$ and $g$.  The pointwise maximum function $\max (M(f), M(g) )$ is a joint modulus of
uniform continuity for $f$ and $g$.  If $B_X (f,g,m)$ is the function provided by Theorem \ref{Bmetric} part (2),
then the function defined by $B^\prime_X (f,g) = B_x(f,g,\max (M(f), M(g) ))$ is the desired Banach function.
The converse is
immediate from Theorem \ref{Bmetric}.
\end{proof}

Because $(\sf M )$ is a consequence of $(\sf{MUC})$, the principle $(\sf M )$ does not imply
$\prince$.  That is, the converse of Lemma \ref{7B1} is not true.  The next two results show that like $(\sf{MUC})$,
the second order theorems of $(\sf M )$ are exactly those of $\WKLo$.  As part of that proof,
the next lemma allows us to change representations of functions, with the eventual goal of applying a
traditional reverse mathematics result to show that $(\sf M )$ implies $\WKLo$.

\begin{lemma}[$\rcaw$]\label{7E1}
 Suppose $X$ and $Y$ are complete separable metric spaces.
Suppose that $\Phi$ is a code for a totally defined continuous function
as described in Definition II.6.1 of Simpson~\cite{sim09}.  Then there is a function
$f\colon X \to Y$ such that for all $n$, $a$, $r$, $b$, and $s$, if $(n,a,r,b,s) \in \Phi$ then
$d(f(a),b) \le s$.
\end{lemma}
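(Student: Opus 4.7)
The plan is to construct the functional $f\colon X \to Y$ pointwise, using the quantifier-free choice scheme $\QF\AC^{1,0}$ available in $\rcaw$. Given a point $x\in X$ presented as a rapidly converging Cauchy sequence $\langle x_k\rangle_{k\in\setN}$ in the countable dense subset $A$, I would define $f(x)\in Y$ as a rapidly converging Cauchy sequence $\langle y_n\rangle_{n\in\setN}$ in the dense subset $B$ of $Y$, then apply $\lambda$-abstraction to obtain $f$ itself.

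For each $x$ and each $n$, I would search for the least code of a tuple $(m,a,r,b,s,k)$ satisfying $(m,a,r,b,s)\in\Phi$, $s<2^{-n-1}$, and $d(a,x_k)+2^{-k}<r$; the last inequality forces $d(a,x)<r$, so the quintuple ``applies'' to $x$. Since $\Phi$ codes a totally defined continuous function, such a tuple exists for every $x$ and every $n$. The matrix of this existential is quantifier-free in $\Phi$, $x$, $n$, and the tuple code, because membership in $\Phi$, rational comparisons, and the metric on $A$ are all primitive recursive. Packaging $(x,n)$ as a single type-$1$ input, $\QF\AC^{1,0}$ then yields a functional that selects such a tuple; defining $y_n$ to be its $b$-component produces a sequence representing $f(x)$.

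That $\langle y_n\rangle$ is rapidly converging Cauchy would follow from the coherence conditions of Simpson~\cite{sim09}*{II.6.1}: whenever two quintuples $(m,a,r,b,s)$ and $(m',a',r',b',s')$ apply to a common point, their output balls must overlap, forcing $d(b,b')\le s+s'$. Consequently $d(y_n,y_{n'})<2^{-n-1}+2^{-n'-1}\le 2^{-\min(n,n')}$, which is the required convergence rate for an element of $\hat B$.

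For the target property, suppose $(n,a,r,b,s)\in\Phi$. Since $r>0$ and $d(a,a)=0$, the constant Cauchy sequence for $a$ satisfies the selection condition at every stage of the computation of $f(a)$, and the given quintuple itself remains eligible whenever $s<2^{-m-1}$. Let $\tilde b_m$ denote the $m$-th term of the sequence for $f(a)$, chosen from a quintuple in $\Phi$ with $b$-component $\tilde b_m$ and radius $\tilde s_m<2^{-m-1}$ that applies to $a$; by coherence $d(\tilde b_m,b)\le\tilde s_m+s$, so letting $m\to\infty$ yields $d(f(a),b)\le s$. The main obstacle I anticipate is keeping the entire construction genuinely quantifier-free so that $\QF\AC^{1,0}$ truly suffices and no detour through $\prince$ is required; this amounts to carefully packaging the type-$1$ input $x$ with the type-$0$ parameter $n$ and checking that the bounded search is realizable within the primitive recursive framework underlying $\rcaw$.
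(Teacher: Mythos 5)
Your proposal is correct and follows essentially the same route as the paper's proof: define $f(x)(m)$ by searching $\Phi$ for the first applicable quintuple with output radius below $2^{-m-1}$, and use the coherence clauses of Simpson's Definition II.6.1 (refining to a common input ball so that $d(b,b')\le s+s'$) to verify both the rapid convergence of the output sequence and the inequality $d(f(a),b)\le s$. Your extra care in coding the witness $k$ for $d(a,x_k)+2^{-k}<r$ into the search tuple, so that the matrix is genuinely quantifier-free, is a sound way to make precise the paper's appeal to ``the first quintuple in some fixed enumeration.''
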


\begin{proof}
Working in $\rcaw$, suppose $X$, $Y$, and $\Phi$ are as above.  Fix $x \in X$.
Because $x$ is in the domain of the function defined by $\Phi$, for each $m$ we can
find $(n,a,r,b,s) \in \Phi$ (occurring first in some fixed enumeration of quintuples)
such that $d(x,a) < r$ and $s < 2^{-m-1}$.  Set $f(x)(m) = b$.  The sequence
$\langle f(x)(m) : m \in \setN \rangle$ is a rapidly converging sequence of elements of
$Y$ converging to the desired value of $f(x)$.  $\rcaw$ proves the existence of $f$.

We now verify the last sentence of the lemma.  Suppose $(n,a,r,b,s) \in \Phi$.  Let $\varepsilon >0$
and choose $m$ so that $2^{-m-1} < \min \{ \varepsilon /2 , s \}$.  Let
$(n^\prime , a^\prime , r^\prime , b^\prime , s^\prime ) \in \Phi$ be the quintuple witnessing the
the value for $f(a)(m)$.  Then $d(a,a^\prime)<r^\prime$ and
$s^\prime < 2^{-m-1} < \varepsilon /2$.  Let $r_0 = \min \{ r , r^\prime - d(a,a^\prime ) \}$.
Then the ball $B(a, r_0 )$ is a subset of $B(a,r)$, and is also a subset of $B(a^\prime, r^\prime )$.
Applying property (2) of Simpson's Definition II.6.1, we have
$(a, r_0 ) \Phi (b,s)$ and $(a,r_0) \Phi (b^\prime , s^\prime)$.  By property (1) of Simpson's definition,
$d(b,b^\prime ) \le s+s^\prime < s+ \varepsilon/2$.  By the choice of $m$,
$d(b^\prime , f(a) )\le 2^{-m} < \varepsilon /2$.  By the triangle inequality
$d(f(a) ,b ) < s+ \varepsilon$.  Because $\varepsilon$ was an arbitrary positive value,
$d(f(a) ,b) \le s$.
\end{proof}

The preceding lemma allows us to completely characterize the second order theory of $(\sf M )$.

\begin{proposition}\label{7F1}
The second order theorems of $\rcaw + ( \sf M )$ are exactly the same as those of $\WKLo$.
\end{proposition}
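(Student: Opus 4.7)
The plan is to establish the equivalence by proving two separate inclusions: that second-order theorems of $\rcaw + ( \sf M )$ are theorems of $\WKLo$ (a conservativity claim), and conversely that $\rcaw + ( \sf M )$ proves $\WKL$. For the conservativity direction I would chain two facts that are already in hand from the paragraph preceding Lemma~\ref{7B1}: first, $\rcaw$ proves $( \sf{MUC} ) \to ( \sf M )$, so any theorem of $\rcaw + ( \sf M )$ is a theorem of $\rcaw + ( \sf{MUC} )$; second, Kohlenbach has shown that $\rcaw + ( \sf{MUC} )$ is conservative over $\WKLo$ for second-order sentences. Concatenating these yields the conservativity inclusion immediately.

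For the reverse inclusion it suffices to prove $\rcaw + ( \sf M ) \vdash \WKL$, since the second-order part of $\rcaw$ contains $\RCAo$ and any second-order theorem of $\WKLo$ would then be available. By the standard equivalence in Simpson~\cite{sim09}*{IV.2}, $\WKL$ is equivalent over $\RCAo$ to the statement that every continuous function $f \colon [0,1] \to \mathbb{R}$ has a modulus of uniform continuity. To establish this statement in $\rcaw + ( \sf M )$, I would start with a Simpson-style code $\Phi$ for a continuous function on $[0,1]$, invoke Lemma~\ref{7E1} to extract a type-1 function $\tilde f \colon [0,1] \to \mathbb{R}$ with $d(\tilde f(a), b) \le s$ whenever $(n, a, r, b, s) \in \Phi$, and then apply $( \sf M )$ to $\tilde f$ to obtain a candidate $h = M( \tilde f )$.

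The main obstacle will be the last verification: that $h$, or a simple offset such as $k \mapsto h(k+c)$, is a modulus of uniform continuity for the second-order continuous function coded by $\Phi$, not merely for the type-1 function $\tilde f$. Given $u, v \in [0,1]$ with $|u - v| < 2^{-h(k+c)}$, the plan is to choose code quintuples in $\Phi$ centered at rationals $a, a'$ near $u, v$ with output precisions small enough that Lemma~\ref{7E1} supplies tight bounds on $d(\tilde f(a), b)$ and $d(\tilde f(a'), b')$, apply the modulus property of $h$ on $\tilde f$ to bound $d(\tilde f(a), \tilde f(a'))$, and combine the four bounds with the triangle inequality to conclude $|f(u) - f(v)| < 2^{-k}$. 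The difficulty here is not mathematical depth but rather the careful bookkeeping needed to translate between the Simpson-coded representation and the type-1 representation of the same continuous function, and to check that every step goes through inside $\rcaw + ( \sf M )$ without secretly requiring $\prince$ — which is essential, since the whole point of the proposition is that $( \sf M )$ is strictly weaker than $\prince$ at the second-order level.
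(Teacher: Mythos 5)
Your proposal follows exactly the paper's argument: the conservativity direction via $(\sf{MUC}) \to (\sf M)$ and Kohlenbach's conservation result, and the reversal via Simpson's Theorem~IV.2.3, Lemma~\ref{7E1} to pass from the code $\Phi$ to a type-1 function, and an application of $(\sf M)$. The only difference is that you spell out the final bookkeeping (transferring the modulus from the type-1 function back to the coded function) that the paper dispatches in one sentence, which is a reasonable elaboration rather than a change of approach.
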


\begin{proof}
As noted before,
$(\sf M )$ is a consequence of Kohlenbach's $(\sf{MUC})$, and so any second order theorem
provable using $(\sf M )$ is provable in $\WKLo$.  It remains to show that $(\sf M )$ implies $\WKLo$.
By Theorem IV.2.3 of Simpson~\cite{sim09}, it suffices to show that if $f$ is a continuous function
(coded by $\Phi$) on $[0,1]$, then $f$ is uniformly continuous.   Suppose $\Phi$ codes a continuous
function on $[0,1]$.  By Lemma \ref{7E1}, $\rcaw$ proves that there is a function $f\colon [0,1] \to \mathbb R$
matching the values of the coded function.  Applying $(\sf M )$, the function $M(f)$ is a modulus of
uniform continuity for $f$, and so also for the function coded by $\Phi$.  Thus $\Phi$ codes a uniformly
continuous function on $[0,1]$.
\end{proof}

We conclude by pointing out the potential and limitations of this section.
The principle $(\sf M )$ can be viewed as a higher order analogue of $\WKLo$ in much
the same fashion that $\prince$ is a higher order analogue of $\ACAo$.  A number of
Skolemized forms of statements equivalent to $\WKLo$ may be equivalent to $(\sf M )$
over $\rcaw$.  (But not all, as witnessed by Kohlenbach's $\sf {UWKL}$.  See Proposition \ref{E2WKL}.)
However, $(\sf M )$ may not be the only reasonable candidate for a $\WKLo$ analogue.
For example, reformulating
$( \sf M )$ as a function on second order continuous function codes yields an alternative
principle $(\sf M _c )$.  It seems likely that Proposition \ref{7F1} holds for $(\sf M _c )$,
but it is possible that neither $(\sf M )$ nor $(\sf M _c )$ proves the other over $\rcaw$.
 %%%%%%%%%%%%%%%%%%%%%%%%%%%%%%%%%%%%%%%%%%%%%%%%%%%%%%%%%%%%

 %%%%%%%%%%%%%%%%%%%%%%%%%%%%%%%%%%%%%%%%%%%%%%%%%%%%%%%%%%%%%%%%%

\bibsection
\begin{biblist}[\normalsize]

\bib{banach}{article}{
      author={Banach, Stefan},
      title={Un th\'eor\`eme sur les transformations biunivoques},
      journal={Fundamenta Mathematicae},
      volume={6},
      date={1924},
      pages={236--239},
 }

\bib{bg-2011}{article}{
   author={Brattka, Vasco},
   author={Gherardi, Guido},
   title={Weihrauch degrees, omniscience principles and weak computability},
   journal={J. Symbolic Logic},
   volume={76},
   date={2011},
   number={1},
   pages={143--176},
   issn={0022-4812},
   review={\MR{2791341}},
   doi={10.2178/jsl/1294170993},
}

\bib{MR4472209}{book}{
   author={Dzhafarov, Damir D.},
   author={Mummert, Carl},
   title={Reverse mathematics: problems, reductions, and proofs},
   series={Theory and Applications of Computability},
   publisher={Springer, Cham},
   date={2022},
   pages={xix+488},
   isbn={978-3-031-11366-6},
   isbn={978-3-031-11367-3},
   review={\MR{4472209}},
   doi={10.1007/978-3-031-11367-3},
}

 \bib{fefhbk}{article}{
   author={Feferman, Solomon},
   title={Theories of finite type related to mathematical practice},
   conference={
      title={Handbook of mathematical logic},
   },
   book={
      series={Stud. Logic Found. Math.},
      volume={90},
      publisher={North-Holland, Amsterdam},
   },
   date={1977},
   pages={913--971},
   review={\MR{3727428}},
}

\bib{grilliot}{article}{
   author={Grilliot, Thomas J.},
   title={On effectively discontinuous type-$2$ objects},
   journal={J. Symbolic Logic},
   volume={36},
   date={1971},
   pages={245--248},
   issn={0022-4812},
   review={\MR{290972}},
   doi={10.2307/2270259},
}

\bib{hirstthesis}{thesis}{
   author={Hirst, Jeffry L.},
   title={Combinatorics in subsystems of second order arithmetic},
   %note={Thesis (Ph.D.)--The Pennsylvania State University},
   type={Ph.D. thesis, The Pennsylvania State University},
   publisher={ProQuest LLC, Ann Arbor, MI},
   date={1987},
   pages={153},
   review={\MR{2635978}},
}

\bib{hirstmt}{article}{
   author={Hirst, Jeffry L.},
   title={Marriage theorems and reverse mathematics},
   conference={
      title={Logic and computation},
      address={Pittsburgh, PA},
      date={1987},
   },
   book={
      series={Contemp. Math.},
      volume={106},
      publisher={Amer. Math. Soc., Providence, RI},
   },
   date={1990},
   pages={181--196},
   review={\MR{1057822}},
   doi={10.1090/conm/106/1057822},
}

%\bib{Kleene}{book}{
 %  author={Kleene, Stephen Cole},
 %  title={Introduction to metamathematics},
%   publisher={D. Van Nostrand Co., Inc., New York, N. Y.},
%   date={1952},
%  pages={x+550},
%   review={\MR{0051790}},
%}

\bib{kohWKL}{article}{
   author={Kohlenbach, Ulrich},
   title={On uniform weak K\"{o}nig's lemma},
   note={Commemorative Symposium Dedicated to Anne S. Troelstra
   (Noordwijkerhout, 1999)},
   journal={Ann. Pure Appl. Logic},
   volume={114},
   date={2002},
   number={1-3},
   pages={103--116},
   issn={0168-0072},
   review={\MR{1879410}},
   doi={10.1016/S0168-0072(01)00077-X},
}
      
\bib{koh05}{article}{
   author={Kohlenbach, Ulrich},
   title={Higher order reverse mathematics},
   conference={
      title={Reverse mathematics 2001},
   },
   book={
      series={Lect. Notes Log.},
      volume={21},
      publisher={Assoc. Symbol. Logic, La Jolla, CA},
   },
   date={2005},
   pages={281--295},
   review={\MR{2185441}},
}

\bib{ns-2020}{article}{
   author={Normann, Dag},
   author={Sanders, Sam},
   title={On the uncountability of $\mathbb{R}$},
   journal={J. Symb. Log.},
   volume={87},
   date={2022},
   number={4},
   pages={1474--1521},
   issn={0022-4812},
   review={\MR{4510829}},
   doi={10.1017/jsl.2022.27},
}

\bib{ns-2022}{article}{
   author={Normann, Dag},
   author={Sanders, Sam},
   title={On robust theorems due to Bolzano, Weierstrass, Jordan, and Cantor},
   journal={J. Symb. Log.},
%   volume={87},
   date={2022},
%   number={4},
%   pages={1474--1521},
%   issn={0022-4812},
%   review={\MR{4510829}},
%   doi={10.1017/jsl.2022.27},
   note={to appear}
}

\bib{remmel}{article}{
   author={Remmel, J. B.},
   title={On the effectiveness of the Schr\"{o}der--Bernstein theorem},
   journal={Proc. Amer. Math. Soc.},
   volume={83},
   date={1981},
   number={2},
   pages={379--386},
   issn={0002-9939},
   review={\MR{624936}},
   doi={10.2307/2043533},
}

\bib{sy}{article}{
   author={Sakamoto, Nobuyuki},
   author={Yamazaki, Takeshi},
   title={Uniform versions of some axioms of second order arithmetic},
   journal={MLQ Math. Log. Q.},
   volume={50},
   date={2004},
   number={6},
   pages={587--593},
   issn={0942-5616},
   review={\MR{2096172}},
   doi={10.1002/malq.200310122},
}

\bib{sim09}{book}{
   author={Simpson, Stephen G.},
   title={Subsystems of second order arithmetic},
   series={Perspectives in Logic},
   edition={2},
   publisher={Cambridge University Press, Cambridge; Association for
   Symbolic Logic, Poughkeepsie, NY},
   date={2009},
   pages={xvi+444},
   isbn={978-0-521-88439-6},
   review={\MR{2517689}},
   doi={10.1017/CBO9780511581007},
}

\bib{W-1992}{article}{
  author={Klaus Weihrauch},
  title={The TTE-Interpretation of Three
    Hierarchies of Omniscience Principles},
  journal={Informatik Berichte},
  volume={130},
  publisher={FernUniversit\"at Hagen},
  date={September 1992},
  }

\end{biblist}

\end{document}